\title{Analysis of the Lifting Graph}
\author[A.~Assem]{Amena Assem}
\address{University of Waterloo, ON, Canada}
\email{a36mahmo@uwaterloo.ca}
\date{}
\keywords{edge-connectivity, lifting, splitting off, connectivity augmentation, network design. This research was supported in part by the Natural Sciences and Engineering Research Council of Canada (NSERC), funding reference number RGPIN-2019-04669.}
\subjclass[2010]{05C40}
\theoremstyle{plain}
\newtheorem{theorem}{Theorem}[section]
\newtheorem{proposition}[theorem]{Proposition}
\newtheorem{claim}[theorem]{Claim}
\newtheorem{corollary}[theorem]{Corollary}
\newtheorem{lemma}[theorem]{Lemma}
\theoremstyle{definition}
\newtheorem{remark}[theorem]{Remark}
\newtheorem{definition}[theorem]{Definition}
\begin{document}

{\centering\footnotesize \textit{This paper is dedicated to my father, the architect of my mathematical path.}\par}

\begin{abstract}
The \textit{lifting} or \textit{splitting-off} operation on graphs is performed by deleting two edges $sv$ and $sw$ having a common end $s$ and adding a new edge between $v$ and $w$. Such a lift is considered good if it preserves a certain local edge-connectivity between the pairs of vertices different from the vertex $s$ at which lifting takes place. The operation is important for inductive proofs concerning edge-connectivity, and can be seen widely applied in the literature on connectivity augmentation, network design, orientation (of finite and infinite graphs), and edge-disjoint linkage. It was studied by Lov\'asz, who used the term splitting-off, and Mader, who used the term lifting. They proved the first two significant results on it, in 1976 and 1978 respectively, showing the existence of a good lift under certain conditions. Then it was used and studied by other researchers, through the 1980s, both for undirected and directed graphs. In particular, it was investigated further by Frank who proved in 1992 that there are $\lfloor \deg(s)/2 \rfloor $ disjoint good lifts. Motivated by the applications, a new method for studying the operation was introduced by Jord\'an in the late 1990s. He defined and studied the structure of the \textit{non-admissibility graph}, which is the complement of the lifting graph; the subject of this paper. He proved a number of significant structural results on it, which he applied to connectivity augmentation. Independently, in 2016, Thomassen defined the \textit{lifting graph}, and called its complement the \textit{bad graph}, to apply it in finding orientations of infinite graphs. Later in the same year, Thomassen with Ok and Richter extended the study, and applied their results to linkages in infinite graphs.

Here we give a more comprehensive analysis of the structure of the lifting graph. Our approach and results, found independently, have a lot of similarities with those of Jord\'an, who assumed that $\deg(s)$ is even for the most part, and we build on the results by Ok, Richter, and Thomassen. The \emph{lifting graph} $L(G,s,k)$ has as its vertices the set of edges incident with $s$, where a pair is adjacent iff local $k$-edge-connectivity is preserved after lifting it. We find here the structure of $L(G,s,k)$ in the different possibilities for the parities of $\deg(s)$ and $k$. Our main new contribution is establishing a clear correspondence between the structural arrangements of the maximal independent sets of $L(G,s,k)$ and the \emph{dangerous} sets, which define near-critical cuts in $G$. An important consequence is the description of the structure of $G$ from the structure of $L(G,s,k)$ when the latter has a connected complement. In that case, $G-s$ has a path-like or cycle-like structure around $s$, which was previously found by Jord\'an but only in the case when $\deg(s)$ is even. Although dangerous sets were used by Jord\'an, and independent sets were studied when $\deg(s)$ is small by Ok, Richter, and Thomassen, the correspondence between the two as displayed in the structures of $G$ versus $L(G,s,k)$ was not clearly made to stand out. Results of this paper have been applied to solve questions on orientations and linkages in infinite graphs.  
\end{abstract}

\maketitle

\section{Introduction}

The graphs in this paper are finite loopless multigraphs. We present the main results of this paper at the end of this section in Theorem \ref{new results}, but first we pave the way to it through the following introduction.

\emph{Lifting}, also known as \emph{splitting-off}, is an operation widely used in the study of edge-connectivity, and is particularly important in inductive proofs and algorithms. It has applications in network design problems as can be seen in \cite{LapChiLau-Thesis} and \cite{NetworkDesign}, and in connectivity augmentation, which was extensively studied by Frank, Jord\'an, and Bang-Jensen, among others, who proved many theorems on splitting-off for the purpose of that application \cite{Frank-augmenting-meet-requirements-1992}, \cite{Frank-augmentation-network-design-1994}, \cite{Bang-Jensen-Gabow-Jordan-Szigeti1999}, \cite{Jordan1999ConstrainedSplitting}, \cite{Bang-Jensen2000splitting-specified-subset}, \cite{Bang-Jensen2004splitting-between-two-subset}. Other areas of application include finding orientations of high arc-connectivity \cite{mader1978reduction}, \cite{thomassen2016orientations}, \cite{assem2023towards}, \cite{Max2023orientation}, and edge-disjoint linkages \cite{Huck}, \cite{ORT2016linkages}, both in finite and infinite graphs. A variant of the operation for directed, and mixed, graphs was also studied \cite{Bang-JensenFrankJackson-splitting-for-mixed-graphs}. More can be learned about lifting, or splitting-off, and its applications, which also include the constructive characterizations of certain graph classes, from Chapter 8 in Frank's book \emph{Connections in Combinatorial Optimization} \cite{frank2011connections}. Also, in the same book, and in \cite{Frank-submodular} and \cite{BernathKiraly2008}, a matroid-theoretic perspective on the connection between the operation and submodular and skew supermodular funtions is presented.

We point out here that the term \emph{splitting-off} is more common than \emph{lifting} in referring to this operation. Almost the entire literature on the operation and its applications from the 1970s until 2015, with the exception of the paper by Mader \cite{mader1978reduction}, uses the term \emph{splitting-off}. In 1978, Mader called it \emph{lifting} in his paper, which was translated from German to English by Bollob\'as. In 2016, Thomassen used the same term as Mader in an application of lifting to finding orientations in infinite graphs \cite{thomassen2016orientations}, and so did he again in the same year in his subsequent result on linkages with Ok and Richter \cite{ORT2016linkages}. The work in the current paper is built on that of Thomassen, Ok, and Richter, and was developed for the same purpose of extending the results on linkages and orientations in infinite graphs. Some of these applications can be seen in other papers by the author of this paper \cite{assem2023towards}, \cite{Max2023orientation}. Therefore, we use the same term as Thomassen, i.e. \emph{lifting}.

If $s$ is a vertex in a graph $G$ and $sv$ and $sw$ are two edges incident with $s$, then to \textit{lift} the pair of edges $sv$ and $sw$ is to delete them and add the edge $vw$ (if $v=w$, then this results in a loop, below we show how to avoid this). The resulting graph $G_{v,w}$ is the \textit{lift of $G$ at $sv$ and $sw$}, or simply a \textit{lift at} $s$. A lift of $G$ at $s$ is \textit{feasible} if the number of edge-disjoint paths between any two vertices other than $s$ stays the same after the lift. We also say in that case that $sv$ and $sw$ form a \textit{feasible pair}. If $sv$ and $sw$ are parallel edges, then note that they form a feasible pair if and only if their deletion, without adding the loop, preserves the number of edge-disjoint paths between every two vertices different from $s$. Therefore, we redefine \textit{lifting a pair of parallel edges as only deleting them}. In this way the graph remains loopless after lifting.

Unlike contraction, a set of $n$ edge-disjoint paths in a graph obtained from a sequence of feasible lifts can always be returned to a set of $n$ edge-disjoint paths in the original graph, and this is very useful in applications. Also lifting does not affect the parity of an edge-cut.

Lov\'asz and Mader proved the first two significant results on lifting. However, they both referred to a paper by Kotzig on splittings preserving the edge-connectivity of $4$-regular multigraphs \cite{Kotzig}. In 1976 Lov\'asz proved that for any vertex $s$ in an Eulerian graph $G$, a feasible lift exists of $G$ at $s$ \cite{lovasz1976eulerian}. In fact, he proved something stronger \cite[Theorem~1]{lovasz1976eulerian}, that in an Eulerian graph, for any edge $sv$ incident with $s$, there is another edge $sw$ incident with $s$ such that lifting (which he called splitting off) $sv$ and $sw$ does not affect the local edge-connectivity. Note that this, and the fact that lifting does not affect the parity of the degree of any vertex, means that in an Eulerian graph one can perform a complete sequence of lifts at any given vertex $s$ until there are no edges incident with it, and then delete the remaining isolated vertex and have a smaller graph that still has the same local edge-connectivities.

Mader defined lifting only for pairs of edges $sv$ and $sw$ such that $v\neq w$, and in 1978 \cite{mader1978reduction} he proved that for any graph $G$, not necessarily Eulerian, if the vertex $s$ is not a cut-vertex, $\deg(s) \geq 4$, and $s$ has at least two neighbours, then there exists a feasible lift of $G$ at $s$ of a pair of edges with different other ends. In the same paper \cite[Theorem~22]{mader1978reduction} Mader used lifting to give an alternative proof for the theorem of Nash-Williams that every finite graph has an admissible orientation \cite{nash1960orientations}, where an admissible orientation is one such that for any two vertices $x$ and $y$ the number of edge-disjoint paths directed from $x$ to $y$ is at least half the number of edge-disjoint paths between $x$ and $y$, rounded down.

When $\deg(s)=2$, the pair of edges incident with $s$ is clearly feasible, regardless of whether they are parallel or whether $s$ is a cut-vertex or not. The case when $\deg(s)=3$ is problematic as it is possible that no pair of edges incident with $s$ forms a feasible lift, for example if $G$ is $K_4$ or $K_{4,1}$. Section 7 is dedicated to studying that case. 

In 1992 Frank \cite{frank} proved that if $s$ is not incident with a cut-edge and $\deg(s)\neq 3$, then there are $\lfloor \deg(s)/2 \rfloor$ disjoint feasible pairs of edges. In his paper, unlike Mader, he allowed lifting parallel pairs of edges. Frank's result does not necessarily mean that these disjoint pairs are feasible if lifted in sequence. If $sv_1$, $sw_1$, and $sv_2$, $sw_2$ are two disjoint feasible pairs, then it is possible that the pair $sv_2$, $sw_2$ is not feasible in $G_{v_1,w_1}$ (see Figure \ref{example 2 lifting from deg 5 to deg 3}). Frank's theorem only tells us that there is at most one edge incident with $s$ that does not form a feasible pair with any other edge incident with $s$. In other words, any independent set in $L(G,s,k)$ has size at most  $\lceil \deg(s)/2 \rceil $ (recall that a set of vertices is \textit{independent} if no two vertices in it are adjacent).

\begin{theorem}\label{Frank}\cite{frank}
(Frank 1992) Let $G$ be a graph containing a vertex $s$ not incident with a cut-edge such that $\deg(s)\neq 3$. Then there are $\lfloor \deg(s)/2 \rfloor $ pairwise disjoint pairs of edges incident with $s$ each of which defining a feasible lift.
\end{theorem}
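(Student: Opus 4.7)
The plan is to prove Theorem \ref{Frank} by induction on $\deg(s)$, using Mader's 1978 theorem for the single-lift step and an uncrossing analysis of \emph{dangerous sets} to control the iteration.

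I would first fix standard terminology. For a non-empty proper subset $X \subseteq V(G) \setminus \{s\}$, define
$$R_G(X) = \max\{\lambda_G(x,y) : x \in X,\ y \in V(G) \setminus (X \cup \{s\})\},$$
and call $X$ \emph{dangerous} if $d_G(X) \leq R_G(X) + 1$. Since lifting a pair $sv, sw$ with $v, w \in X$ and $s \notin X$ reduces $d_G(X)$ by exactly $2$, a Menger-type check shows $\{sv, sw\}$ is infeasible if and only if some dangerous $X \not\ni s$ contains both $v$ and $w$. The central structural input is an uncrossing lemma: combining the submodular identity
$$d_G(A) + d_G(B) = d_G(A \cap B) + d_G(A \cup B) + 2\, e(A \setminus B, B \setminus A)$$
with $R_G(A \cap B), R_G(A \cup B) \geq \min(R_G(A), R_G(B))$, one shows that two crossing dangerous sets $A, B \not\ni s$ can be replaced by $A \cap B$ and $A \cup B$, which are again dangerous.

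The induction then runs as follows. The base case $\deg(s) = 2$ follows from the no-cut-edge hypothesis: if the unique pair at $s$ were infeasible, a dangerous $X \not\ni s$ containing both neighbours of $s$ would force an edge-cut of size at most $1$ either incident with $s$ or separating $X$ from the rest of $V \setminus \{s\}$, and a short case analysis rules out both possibilities. For the inductive step $\deg(s) \geq 4$, one obtains a feasible pair $P_1 = \{sv, sw\}$ from Mader's theorem (applied directly when $s$ is not a cut-vertex, and otherwise inside a block containing $s$, with the verification that a feasible pair inside the block remains feasible in $G$), performs the lift to obtain $G'$, and invokes the inductive hypothesis on $G'$ to recover $\lfloor \deg(s)/2 \rfloor - 1$ further pairwise disjoint feasible pairs. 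A short argument using the preservation of local connectivities under the sequence of lifts then shows each of those pairs is also feasible in the original graph $G$.

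The main obstacle is ensuring that $P_1$ can be chosen so that $G'$ still satisfies the theorem's hypotheses, i.e., $s$ is not incident with a cut-edge of $G'$ and $\deg_{G'}(s) \neq 3$. The first requirement is controlled by uncrossing the family of \emph{near-dangerous} sets (those $X$ with $d_G(X) = R_G(X) + 2$, which would become dangerous after a bad choice of $P_1$); a laminar-structure argument shows that not every Mader lift can fall into every such near-dangerous set. The second requirement, namely the exceptional case $\deg(s) = 5$, is the genuine difficulty: here any lift drops $\deg(s)$ to $3$ and the inductive hypothesis is unavailable, so one must construct two disjoint feasible pairs in $G$ directly, by a refined application of Mader to two distinct auxiliary contexts combined with a dangerous-set case analysis. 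This $\deg(s) = 5$ step is where the exclusion $\deg(s) \neq 3$ in the hypothesis is genuinely used, and is the crux of the argument.
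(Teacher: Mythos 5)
The paper cites Frank's theorem without giving a proof, so there is no internal argument to compare against. On its own terms your outline uses the right tools, and your lift-monotonicity observation (feasibility of the later pairs in $G'$ implies feasibility in $G$) is correct. However, the uncrossing lemma is wrong as stated: the inequality $R_G(A\cap B),\,R_G(A\cup B)\ge \min(R_G(A),R_G(B))$ fails whenever the pair achieving $R_G(A)$ has one endpoint in $A\setminus B$ and the other in $B\setminus A$, since such a pair says nothing about $R_G(A\cap B)$ or $R_G(A\cup B)$. The classical splitting-off argument (Lov\'asz, Mader, Frank) handles this via a dichotomy: in one configuration your submodular identity shows that $A\cap B$ or $A\cup B$ is dangerous, but in the antipodal configuration one must instead use the companion identity $d(A)+d(B)=d(A\setminus B)+d(B\setminus A)+2\,e(A\cap B,\overline{A\cup B})$ and conclude that $A\setminus B$ or $B\setminus A$ is dangerous --- sets that do not contain both $v$ and $w$. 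So the family of dangerous sets associated with a fixed pair is not closed under the ``replace $A,B$ by $A\cap B,A\cup B$'' step you describe, and everything you build on it is unsupported.

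The larger gap is the one you flag yourself: for $\deg(s)=5$, a single Mader lift leaves $\deg(s)=3$, where the inductive hypothesis is unavailable and, as in $K_4$, no feasible pair need exist, so the reduction breaks. You note that this case ``must be constructed directly by a refined application of Mader to two distinct auxiliary contexts combined with a dangerous-set case analysis,'' but that is a description of the obstacle rather than an argument, and without it the induction does not close for any odd degree (every odd $\deg(s)$ eventually reduces to $5$). Similarly, the preservation of ``$s$ not incident with a cut-edge'' after the first lift is only asserted via an unspecified laminar-family argument; feasibility controls $\lambda(x,y)$ only for $x,y\ne s$ and says nothing directly about edge-connectivity at $s$, so this point needs a genuine proof rather than a gesture. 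As written, the proposal correctly names the hard steps but does not carry them out, and the uncrossing lemma that would have powered them is incorrect.
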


In this paper, we are only concerned with preserving a prescribed uniform local edge-connectivity $k$, which may be less than the number of edge-disjoint paths between one pair of vertices. We only need the local edge-connectivity between any two vertices, other than the fixed vertex $s$ at which the lifting is performed, not to fall below the target connectivity $k$ after lifting. The $k$-\textit{lifting graph}, to be defined below, encodes the information of which pairs of edges satisfy this. 

Ideally, in applications we want to have all but at most one of the edges incident with $s$ lifted. The final graph, resulting from a sequence of lifts, is $k$-edge-connected because no path between two vertices other than $s$ can go through $s$ when $\deg(s)\leq 1$. Then we can apply theorems that hold for $k$-edge-connected graphs to that final graph. Ending the sequence when all the remaining edges incident with $s$ are parallel will also satisfy the same purpose. Such a sequence of lifts is not always possible when $\deg(s)$ is odd as we may have to stop at $\deg(s)=3$ as will be seen in some examples.

Motivated by the applications, Jord\'an in 1999 started a new way of studying lifting in order to see the structure induced by the non-admissible pairs of edges (whose lifting does not preserve a uniform local edge connectivity $k$) incident with a vertex $s$. His results were published in 2003 in \cite{Jordan1999ConstrainedSplitting}. He defined the \emph{non-admissibility} graph $B(s)$ on the set of neighbours $N(s)$ of $s$ in $G$. Two vertices $u,v\in N(s)$ are adjacent in $B(s)$ if and only if the pair $su, sv$ is not-admissible in $G$. He proved the following structural results:

\begin{itemize}
\item If $k$ and $\deg(s)$ are both even and $|N(s)|\geq 2$, then $B(s)$ is the disjoint union of at least two complete graphs \cite[Theorem~3.2]{Jordan1999ConstrainedSplitting} (compare this to (4) in Theorem \ref{ORT} below).

\item If $\deg(s)$ is even and $|N(s)\geq 2|$, then every component of $B(s)$ is one of the following: a clique, two cliques meeting at one vertex, two disjoint cliques connected by a path, or a cycle of length at least $4$ \cite[Theorem~3.4]{Jordan1999ConstrainedSplitting}.

\item If $\deg(s)$ is even, $k\geq 2$, and $|N(s)\geq 2|$, then $B(s)$ is $2$-edge-connected if and only if $B(s)$ is a cycle of length $\deg(s)$, $k$ is odd, and $G$ has a \emph{round} structure with respect to $s$ \cite[Theorem~3.6]{Jordan1999ConstrainedSplitting}.
\end{itemize}

In 2016, independently and using a different method, Thomassen showed that the $k$-lifting graph of an Eulerian graph has a disconnected complement \cite[Theorem~2]{thomassen2016orientations}. Note that this is weaker than what Jord\'an proved, because it did not specify the structure of each component in the complement (a clique), and because Jord\'an assumed that only $\deg(s)$ is even but the graph is not necessarily assumed to be Eulerian. Then Ok, Richter, and Thomassen, also in 2016, showed that when both $\deg(s)$ and $k$ are even, the lifting graph is a complete multipartite graph \cite[Theorem~1.2]{ORT2016linkages} (also see (4) in Theorem \ref{ORT} below), hence its complement is the disjoint union of cliques as in Jord\'an's result (\cite[Theorem~3.2]{Jordan1999ConstrainedSplitting}). They proved this independently from Jord\'an's work and with a different proof idea, however they too made use of dangerous sets as Jord\'an did, which is a standard tool in this context. They also described to some detail the structure of the lifting graph in the case when it is disconnected. Their results are listed in Theorem \ref{ORT} below, which we state after introducing the necessary technical definitions.

Thomassen used his result on the lifting graph to prove that an edge-connectivity of $8k$ is sufficient for an infinite graph to admit a $k$-arc-connected orientation \cite{thomassen2016orientations}. It was not known before whether there is any function $f(k)$ at all such that $f(k)$-edge-connected infinite graphs have a $k$-arc-connected orientation. Theorem \ref{ORT} was crucial in Ok, Richter, and Thomassen's proof that, for odd $k$, every $(k+2)$-edge-connected, locally-finite, $1$-ended infinite graph is weakly $k$-linked \cite{ORT2016linkages}. This was an important step towards Thomassen's weak linkage conjecture for infinite graphs \cite{Thomassen2linked}. It brought us closer to extending from finite to infinite graphs the theorem of Huck \cite{Huck} that for odd $k$ an edge-connectivity of $(k+1)$ is enough for a finite graph to be weakly $k$-linked, which they used in their proof. That result of Huck for finite graphs was also proved using lifting. The author of this paper extended the linkage result of Ok, Richter, and Thomassen and showed that Huck's theorem holds for infinite graphs, i.e. an improvement from $(k+2)$ to $(k+1)$ in the edge-connectivity assumption. She used results on the lifting graph from the current paper for this extension. The paper on linkage is not yet written. Now we present some definitions needed to state the new results.

\begin{definition} (Connectivity ignoring $s$)
A graph $G$ is $(s,k)$-edge-connected if it is a graph on at least three vertices, $s$ is a vertex of $G$, $k$ is a positive integer, and any two vertices in $G$ both different from $s$ are joined by $k$ pairwise edge-disjoint paths in $G$ (that may go through $s$).
\end{definition}

 Note that $G$ is not necessarily $k$-edge-connected in this definition. By Menger's Theorem \ref{Menger}, the definition is equivalent to that for any subset $X$ of vertices such that $\emptyset \neq X \subsetneq V(G)\setminus \{s\}$, we have $|\delta(X)|\geq k$, where $\delta(X)$ denotes the set of edges with one end in $X$ and one end outside $X$. 
 
 If $k\geq 2$, $\deg(s)\geq 2$, and $G$ is $(s,k)$-edge-connected then $s$ is not incident with a cut-edge, which is a condition we need in almost all of our statements. Thus, the assumptions that $k\geq 2$ and $\deg(s)\geq 2$ will be found frequently throughout the paper. We can now talk about lifting with a prescribed edge-connectivity using this definition.

 Let $G$ be an $(s,k)$-edge-connected graph, then the edges $sv$ and $sw$, are $k$-\textit{liftable}, or they form a $k$-\textit{liftable} pair, if $G_{v,w}$ is $(s,k)$-edge-connected. We simply say \textit{liftable} when connectivity is understood from the context. We also say that the edge $sv$ \textit{lifts} with $sw$. The $k$-\textit{lifting} graph $L(G,s,k)$ has as its vertices the edges of $G$ incident with $s$ and two vertices are adjacent in $L(G,s,k)$ if they form a $k$-liftable pair. 

 Note that if $k=1$ then being $1$-liftable means having one path between every two non-$s$ vertices after lifting. Thus, a pair of edges $sv$ and $sw$ is not $1$-liftable if and only if $\deg(s)\geq 3$ and $\{sv,sw\}$ is a minimal cut or $sv$ and $sw$ are both cut-edges. This can be seen by looking at the components of $G-s$. 

 Thus the lifting graph in case $k=1$ is $K_2$ if $\deg(s)=2$, but if $\deg(s)\geq 3$, then it is the union of two graphs. One is a complete multipartite graph with one cluster containing all the cut-edges incident with $s$ (this could possibly be the only cluster) and whose other clusters are the minimal cuts of size at least $2$ consisting of edges incident with $s$. The other is a union of cliques on the clusters of size at least $3$ of the first graph that are different from the one cluster containing all the cut edges. Or in other words, it is a complete graph minus one clique and some disjoint edges. This is the characterization of liftability in the case $k=1$.

Before presenting our theorem, we present here first the theorem by Ok, Richter, and Thomassen for comparison.

\begin{theorem}\label{ORT}\cite[Theorem~1.2]{ORT2016linkages} (Ok, Richter, and Thomassen 2016)
Let $G$ be an $(s,k)$-edge-connected graph such that $\deg(s)\geq 4$ and $s$ is not incident with a cut-edge. Then:
\begin{enumerate}
\item[(1)] The $k$-lifting graph $L(G,s,k)$ has at most two components;
\item[(2)] If $\deg(s)$ is odd and $L(G,s,k)$ has two components, then one has only one vertex and the other component is complete multipartite;
\item[(3)] If $\deg(s)$ is even and $L(G,s,k)$ has two components, then each component is complete multipartite with an even number of vertices and;
\item[(4)] If $\deg(s)$ and $k$ are both even, then $L(G,s,k)$ is a connected, complete, multipartite graph (in particular, it has a disconnected complement).
\end{enumerate}
If either $L(G,s,k)$ is not connected or both $\deg(s)$ and $k$ are even, then any component of $L(G,s,k)$ with at least $4$ vertices is not a star $K_{1,r}$.
\end{theorem}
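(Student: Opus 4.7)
The plan is to use the classical tight-set machinery for lifting. By Menger's theorem, the pair $\{sv,sw\}$ is \emph{not} $k$-liftable if and only if there exists a \emph{dangerous set} $X\subseteq V(G)\setminus\{s\}$ with $v,w\in X$, with $V\setminus(X\cup\{s\})$ nonempty, and with $d(X)\le k+1$: lifting $sv,sw$ decreases $d(Z)$ by $2$ precisely when $v,w\in Z\not\ni s$ and leaves every other cut unchanged, while $(s,k)$-edge-connectivity forces $d(Z)\ge k$ whenever $Z$ and its complement both contain a non-$s$ vertex. The key bonus observation is that if $X$ is dangerous, then \emph{every} pair $\{sv',sv''\}$ with $v',v''\in X$ is non-liftable; equivalently, the set $E_s(X)$ of edges from $s$ into $X$ is an independent set of $L(G,s,k)$.

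Next I would prove that within any single component of $L(G,s,k)$, non-adjacency is transitive, forcing each component to be complete multipartite. Let $sv,su,sw$ lie in a common component with $\{sv,sw\}$ and $\{su,sw\}$ both non-liftable, witnessed by dangerous sets $X\ni v,w$ and $Y\ni u,w$. The natural candidate dangerous set for $\{sv,su\}$ is $X\cup Y\ni v,u$. Submodularity together with the lower bound $d(X\cap Y)\ge k$ (which holds because $w\in X\cap Y$ and the complement is proper) yields
\[
d(X\cup Y)\ \le\ d(X)+d(Y)-d(X\cap Y)\ \le\ 2(k+1)-k\ =\ k+2,
\]
which is one unit short of the required $k+1$. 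To close the gap I would invoke the parity identity $d(Z)+d(Z\cup\{s\})\equiv\deg(s)\pmod 2$ applied to $Z=X\cup Y$, combined with $d((X\cup Y)\cup\{s\})\ge k$, and in parallel pursue the minimality of the chosen witnesses (e.g.\ always selecting a witness with $d=k$ when one exists). The precise parity pattern of $(\deg(s),k)$ governs how much of the missing unit is recoverable, and this is the technical core of the proof: it is ultimately what distinguishes the four cases.

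To bound the number of components by $2$ (and by $1$ in case (4)), I would run the same union trick across components. Frank's theorem provides a matching of size $\lfloor\deg(s)/2\rfloor$, hence at most one isolated vertex in $L(G,s,k)$, and only when $\deg(s)$ is odd. Suppose $L(G,s,k)$ has three components. Pick either an isolated vertex $e_3=sv_3$ together with a liftable pair $\{sv_1,sw_1\}$ from another component, or, if no component is isolated, a liftable pair $\{sv_1,sw_1\}$ inside some $C_1$ together with any $sv_2\in C_2$. In either configuration the two cross-component non-liftable pairs have dangerous witnesses whose union, via the argument above, is dangerous and contains both endpoints $v_1,w_1$ of a liftable pair, contradicting adjacency of that pair in $L(G,s,k)$. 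In case (4) the parity step never fails, so the same argument applied to just two non-trivial components already produces the contradiction, collapsing $L(G,s,k)$ to a single connected complete multipartite graph. In cases (2) and (3) the parity step fails in a controlled manner that permits exactly one additional component: of a single vertex when $\deg(s)$ is odd (case (2)), and of even size when $\deg(s)$ is even (case (3)).

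For the no-$K_{1,r}$ conclusion with $r\ge 3$, let $sv_0$ be the centre and $sv_1,\dots,sv_r$ the leaves of a star component. Every pair of leaves is non-liftable, so iterating the union trick on dangerous witnesses $X_{ij}\ni v_i,v_j$ builds a single dangerous set $W\supseteq\{v_1,\dots,v_r\}$; this iteration needs the parity step to succeed repeatedly, which is guaranteed in exactly the cases where the non-star conclusion is claimed. If $v_0\in W$, then $E_s(W)$ is an independent set of $L(G,s,k)$ containing $sv_0$ together with every leaf $sv_i$, contradicting that $sv_0$ is adjacent to each leaf. If $v_0\notin W$, a symmetric argument on a dangerous set for some pair $\{sv_0,sv_i\}$ lying inside $V\setminus(W\cup\{s\})$ yields the same contradiction. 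The recurring main obstacle throughout the plan is the parity bookkeeping that upgrades the submodular bound $d(X\cup Y)\le k+2$ to $d(X\cup Y)\le k+1$, and this is exactly where the four cases of the theorem enter decisively and where the careful case analysis of Ok, Richter, and Thomassen becomes unavoidable.
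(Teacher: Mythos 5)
This statement is imported verbatim from Ok, Richter, and Thomassen and is not reproved in this paper; the paper itself records that the original proof is by induction on $\deg(s)$, whereas your proposal is a direct tight-set argument. That difference in strategy is not itself a problem, but your sketch has genuine gaps in exactly the place you flag as the ``technical core.''

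The central deduction — upgrading the submodularity bound $d(X\cup Y)\le k+2$ to $d(X\cup Y)\le k+1$ so that $X\cup Y$ is dangerous — is not completed. Bare submodularity cannot do it: the deficit in $d(X)+d(Y)\ge d(X\cup Y)+d(X\cap Y)$ equals $2\,d(X\setminus Y : Y\setminus X)$, and you have no control on that quantity. The parity identity $d(Z)+d(Z\cup\{s\})\equiv\deg(s)\ (\mathrm{mod}\ 2)$ also does not, by itself, eliminate the value $k+2$; indeed it \emph{must} sometimes fail to do so, since conclusions (2) and (3) explicitly allow two components, which is precisely the situation where transitivity of non-adjacency across the cut fails. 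The actual resolution used in this line of work is the four-part identity (Equation \ref{intersection of two cuts}) applied in $G-s$, which separately accounts for the crossing edges $\delta(A_1\setminus A_2 : A_2\setminus A_1)$ and $\delta(A_1\cap A_2 : \overline{A_1\cup A_2})$ and yields the tight case analysis; you acknowledge this is where ``the careful case analysis of Ok, Richter, and Thomassen becomes unavoidable,'' but you do not supply it. A second, smaller gap: $X\cup Y$ may satisfy $V(G)\setminus(X\cup Y\cup\{s\})=\emptyset$, in which case it is not a dangerous set at all; this degenerate case requires its own handling (compare the treatment of $\overline{A_1\cup A_2\cup\{s\}}=\emptyset$ inside Lemmas \ref{unique minimal dangerous} and \ref{two dangerous}). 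As written, the proposal is an outline of the right objects (dangerous sets, submodularity, parity) rather than a proof, and the step that distinguishes the four cases of the theorem is deferred rather than carried out.
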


Note that point (4) was previously proved by Jord\'an in \cite[Theorem~3.2]{Jordan1999ConstrainedSplitting}. In this paper we strengthen Theorem \ref{ORT} and we also extend Jord\'an's results to the case when $\deg(s)$ is odd and present both in a more general setting. We give a description of the lifting graph for the different possibilities of the parities of $\deg(s)$ and $k$. In particular, the interesting case of a connected complement of the lifting graph is completely described: the maximal independent sets of the lifting graph form either a path or a cycle via intersections. The same (path or cycle) structure is displayed in the graph, with blobs corresponding to the intersections. Consecutive blobs are joined by precisely $(k-1)/2$ edges except in the case when the structure is a path of length $2$ of blobs. The cycle structure is the same as the \emph{round} structure found by Jord\'an in \cite[Theorem~3.6]{Jordan1999ConstrainedSplitting}. To describe these path and cycle structures formally, we introduce the following definitions.

\begin{figure}[!h]
  \centering
  \begin{minipage}[b]{0.4\textwidth}
    \includegraphics[width=\textwidth]{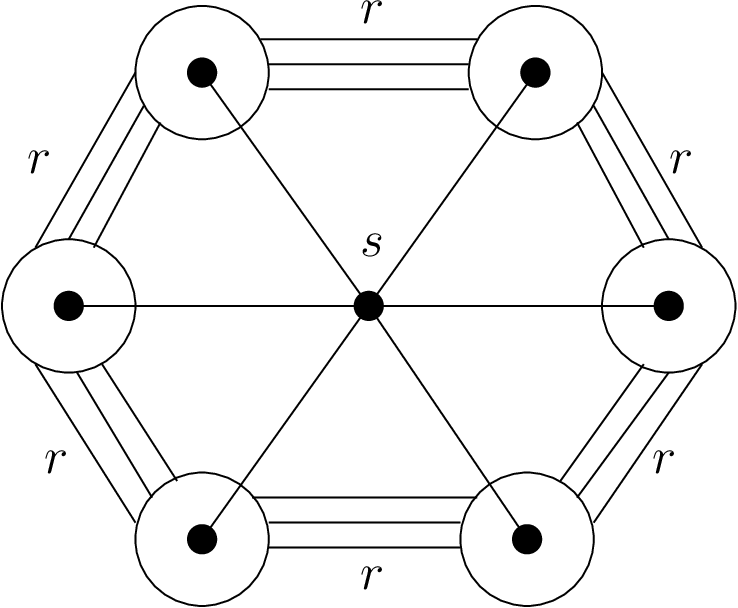}
    \caption {cycle structure}
    \label{cycle}
  \end{minipage}
  \hfill
  \begin{minipage}[b]{0.5\textwidth}
    \includegraphics[width=\textwidth]{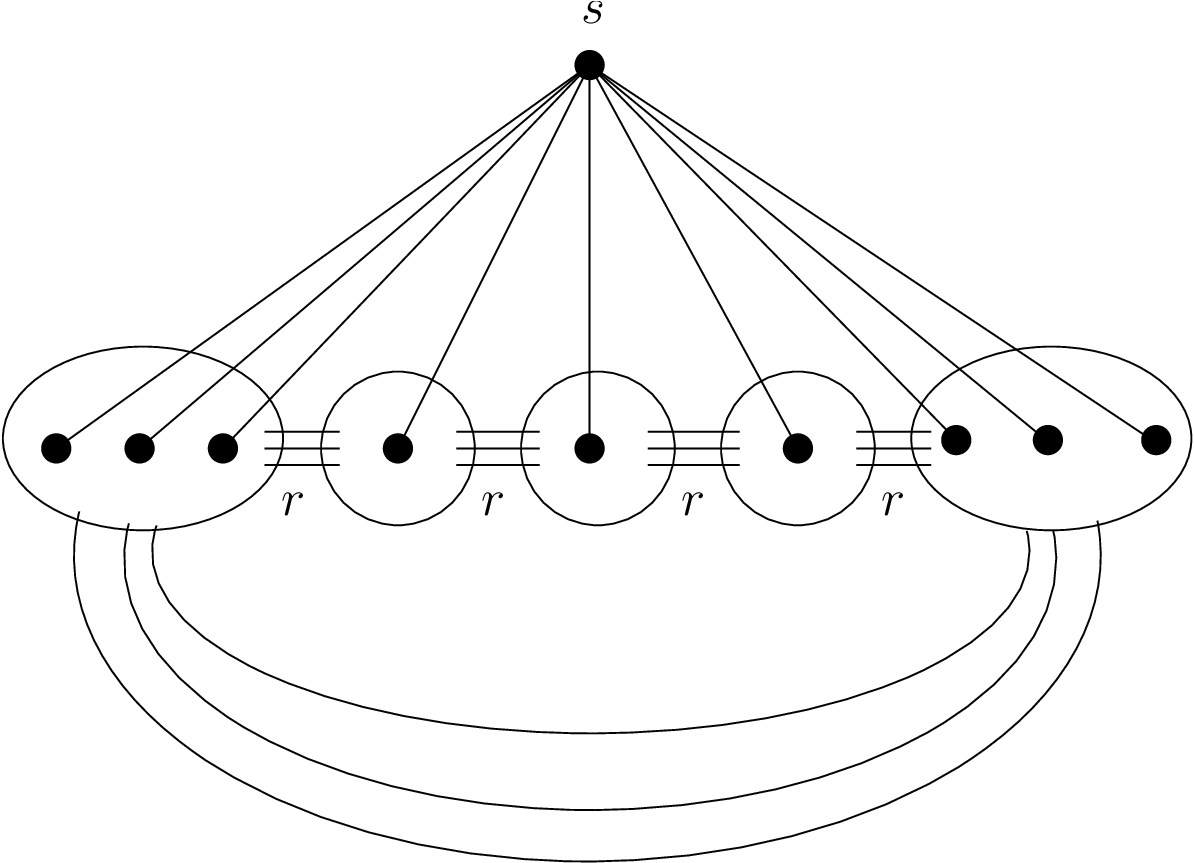}
    \caption{path structure}
    \label{path}
  \end{minipage}
\end{figure}

\begin{definition} (Path or Cycle Structure)
 Let $G$ be a graph, $s$ a vertex of $G$, $r\geq 1$ and $n\geq 3$ integers, such that $G-s$ has pairwise disjoint subgraphs $H_0,\dots,H_{n-1}$ satisfying $V(G-s)=\bigcup_{i=0}^{n-1}V(H_i)$. The graph $G$ has an \emph{$(s,r)$-cycle structure} (\emph{$(s,r)$-path structure}) if for $i=0,\dots,n-1$ ($i=1,\dots,n-2$), $\delta(H_i)$ consists of $2r+1$ edges: one edge whose other end is $s$, and $r$ edges with other ends in $H_{i-1}$, and another $r$ edges with other ends in $H_{i+1}$, where indices are read modulo $n$. The $H_i$ are the \emph{blobs} of the structure. See Figures \ref{cycle} and \ref{path}.
\end{definition}

We summarize the main new results in the following theorem. Here we assume that $\deg(s)> 4$ as the case $\deg(s)=4$ was completely described by Ok, Richter, and Thomassen in \cite{ORT2016linkages}, and will be presented in Lemma \ref{4 cycle} ($L(G,s,k)$ is one of $K_2\cup K_2$, $C_4$, or $K_4$) for completeness.

\begin{theorem}(Main Results)\label{new results}
Let $G$ be an $(s,k)$-edge-connected graph such that $k\geq 2$ and $\deg(s)> 4$. The following holds: 
\begin{enumerate}
\item[(i)] If $L(G,s,k)$ is disconnected, then $\deg(s)$ is odd and $L(G,s,k)$ is an isolated vertex plus a balanced complete bipartite graph (Corollary \ref{the only disconnected case of lifting graph}).
\item[(ii)] If $k$ is even, then either $L(G,s,k)$ is complete multipartite or an isolated vertex plus a balanced complete bipartite graph (This follows from (3) and (4) in Theorem \ref{summary of lifting graph facts}).
\item[(iii)] If the complement of $L(G,s,k)$ is connected, then $k$ is odd except possibly in the case when $L(G,s,k)$ is as in (i) where it can be even as well as odd, and the complement is one of the following:
\begin{itemize}
\item a Hamilton cycle, or 
\item two cliques of the same size joined by a path (possibly a path of one vertex as in (i)).
\end{itemize}

The same cycle or path structure is displayed in $G$: 
\begin{itemize}
    \item $G$ has an \emph{$(s,r)$-cycle structure}, or
    \item an \emph{$(s,r)$-path structure}
\end{itemize}
with $r=(k-1)/2$ except possibly in the case when $L(G,s,k)$ is as in (i) where we only know that $r\geq (k-1)/2$. 
Moreover, each of the blobs of the structure is $\lceil k/2 \rceil$-edge-connected except possibly for the middle blob in the case when $L(G,s,k)$ is as in $(i)$ (Theorems \ref{connected complement}, \ref{cycle or path structure of G}, and Corollary \ref{the blobs are (k+1)/2 connected}).
\end{enumerate}
\end{theorem}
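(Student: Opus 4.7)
The plan is to prove all three parts through a systematic study of the maximal independent sets (MIS) of $L(G,s,k)$ and the tight cuts they encode. A pair $sv,sw$ is non-liftable precisely when some minimal edge cut $\delta(X)$ with $s\notin X$, of size roughly $k$ (depending on the parities of $k$ and $\deg(s)$), contains both edges. So each MIS $I$ is naturally associated with such a ``tight'' cut $\delta(X_I)$, and by Theorem \ref{Frank} we have $|I|\le\lceil\deg(s)/2\rceil$. The structural content of the theorem is encoded in how pairs of MIS intersect and how the associated sets $X_I$ sit inside $G$, interacting through the standard submodular inequality $|\delta(X)|+|\delta(Y)|\ge|\delta(X\cap Y)|+|\delta(X\cup Y)|$.

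For (i) and (ii), I would start from Theorem \ref{ORT} and refine its cases. When $L(G,s,k)$ is disconnected, parts (2)--(3) of Theorem \ref{ORT} leave very few possibilities; pushing the parity of $|\delta(X_I)|$ against $(s,k)$-edge-connectivity, together with the hypothesis $\deg(s)>4$, eliminates everything except the configuration described in (i), which forces $\deg(s)$ odd. Part (ii) then follows by combining this refinement with Theorem \ref{ORT}(4): if $k$ is even and $L(G,s,k)$ is not complete multipartite, we are in the disconnected case, which is already pinned down by (i).

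Part (iii) is the heart of the argument. Assuming the complement of $L(G,s,k)$ is connected, I would list the MIS as $I_0,\ldots,I_{n-1}$ and carry out four steps. First, using submodularity together with the Frank cap, show that each MIS intersects exactly two others (with endpoint exceptions in the path case), so the MIS form a cycle or path via overlaps. Second, define the blobs as the cells of the tight-cut arrangement $\{X_i\}$ (intersections of consecutive $X_i$ in the cycle case, appropriate corner cells in the path case), and show $\delta(H_i)$ consists of one edge to $s$ plus edges to $H_{i-1}$ and $H_{i+1}$ only. Third, count the edges in $\delta(H_i)$ and apply a parity argument, using that each tight cut has size $k$ or $k+1$ according to parity, to force exactly $r=(k-1)/2$ edges between consecutive blobs and to force $k$ odd, with the exception flagged in (iii). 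Fourth, deduce $\lceil k/2\rceil$-edge-connectivity of each blob by observing that a smaller internal cut, combined via submodularity with the surrounding tight cuts, would violate the $(s,k)$-edge-connectivity of $G$.

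The main obstacle will be the first step of part (iii): ruling out branching in the overlap graph of MIS, i.e., showing no MIS can meet three or more others. This is where the tension between the Frank bound, the parity constraints on tight cuts, and the hypothesis $\deg(s)>4$ must be exploited most delicately; an MIS meeting three others would produce three overlapping tight cuts whose pairwise corner cells, via repeated submodularity, give either an $s$-cut that is too small or an independent set in $L(G,s,k)$ of size exceeding $\lceil\deg(s)/2\rceil$. Once the MIS are forced onto a cycle or path, the remaining edge-counting, parity, and blob-connectivity arguments are fairly routine consequences of submodularity.
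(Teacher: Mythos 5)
Your overall strategy matches the paper's: study the maximal independent sets (MIS) of $L(G,s,k)$ through their associated dangerous sets, combine the standard two-cut identity (equation \ref{intersection of two cuts}) with the Frank bound $\lceil\deg(s)/2\rceil$ on independent sets, and show the intersection pattern of the MIS forms a path or cycle. The identification of ``no branching in the MIS overlap graph'' as the central obstacle is also correct.

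However, two points in your sketch do not hold up as written. First, your route to (i) — ``pushing the parity of $|\delta(X_I)|$ against $(s,k)$-edge-connectivity'' to refine Theorem \ref{ORT}'s disconnected cases — misidentifies the mechanism. Parity of the tight cuts is not what kills ORT case (3) for $\deg(s)>4$; what kills it is the MIS intersection structure. For example, with $\deg(s)=6$ a disconnected $L=K_2\cup K_{2,2}$ passes all the ORT constraints, and one rules it out by noting that two MIS of $L$ (namely $\{a\}\cup P$ and $\{a'\}\cup P$ for $P$ a part of $K_{2,2}$) meet in two vertices, contradicting the fact that MIS meet in at most one vertex; other candidates such as $K_4\cup K_4$ survive that test but give three MIS with a common intersection, which requires the separate no-common-triple-intersection lemma. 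In the paper, (i) is not established independently of (iii) but falls out as Corollary \ref{the only disconnected case of lifting graph} of the connected-complement theorem; it is downstream of the cycle/path machinery, not an elementary warm-up.

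Second, and more seriously, your one-sentence sketch of the no-branching step — ``three overlapping tight cuts whose pairwise corner cells, via repeated submodularity, give either an $s$-cut that is too small or an independent set exceeding $\lceil\deg(s)/2\rceil$'' — glosses over what the paper needs two genuinely distinct and technical lemmas to do. Proving that no three MIS share a common edge (Lemma \ref{common intersection}, via Lemma \ref{three dangerous}) requires a careful bookkeeping argument over the seven-cell Venn diagram of three dangerous sets, not a single submodularity application: one first shows at least one of the three pairwise-intersection-minus-third cells is empty, then two, then all three, and finally derives a contradiction from the $(k-1)/2$-edge bound of Lemma \ref{two dangerous}. And ruling out a vertex of degree $\ge 3$ in $I(L)$ needs in addition the fact that a ``middle'' dangerous set $A_2$ sandwiched between $A_1$ and $A_3$ satisfies $A_2\setminus(A_1\cup A_3)=\emptyset$ (Lemma \ref{dangerous set between two other dangerous sets}), which is what forces a would-be fourth neighbour $I_4$ into a nonexistent cell. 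Neither of these is a ``routine consequence of submodularity''; a proof attempt following your sketch as written would stall here.
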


From $(i)$ we now know that $(3)$ in Theorem \ref{ORT} is not realizable when $\deg(s)>4$, and that the multipartite component in point $(2)$ of the same theorem is in fact bipartite (complete and balanced). If $\deg(s)$ and $k$ are both even, we know from \cite[Theorem~3.2]{Jordan1999ConstrainedSplitting} and $(4)$ in Theorem \ref{ORT} that $L(G,s,k)$ is complete multipartite (disconnected complement). If $\deg(s)$ is odd and $k$ is even, it is still possible that $L(G,s,k)$ is complete multipartite, however there is also a second possibility, as presented in (ii), that of an isolated vertex plus a balanced complete bipartite graph, seen differently, the complement is two cliques meeting at a vertex (disconnected lifting graph with connected complement). If $k$ is odd, then it is possible that both $L(G,s,k)$ and its complement are connected, in that case the complement is a Hamilton cycle or two cliques of the same size joined by a path containing at least one edge. These two structures can happen when $\deg(s)$ is even as well as when it is odd, but the connectivity $k$ must be odd. In \cite[Theorem~3.4]{Jordan1999ConstrainedSplitting} Jord\'an showed that when $\deg(s)$ is even each component of the complement of $L(G,s,k)$ is either a clique, two cliques meeting at a vertex, two cliques joined by a nontrivial path, or a cycle of length at least $4$. That is comparable to Corollary \ref{component of complement} here, which was a step towards proving (iii) above, holding regardless of the parity of $\deg(s)$. When $\deg(s)$ is even, Jord\'an also found the parameter $\frac{(k-1)}{2}$ for intersecting dangerous sets in \cite[Lemma~2.3]{Jordan1999ConstrainedSplitting}, and showed in \cite[Theorem~3.6]{Jordan1999ConstrainedSplitting} that the complement of $L(G,s,k)$ is $2$-edge-connected if and only if it is a cycle and $G$ has a \emph{round} structure with respect to $s$; the same as our cycle structure. Here we precisely describe the path and cycle structures regardless of the parity of $\deg(s)$.

Part (ii) of the above theorem was crucial to our applications in orientations of infinite graphs. In \cite{assem2023towards} we used the statement (ii) as it is to prove that $4k$-edge-connected, $1$-ended, locally finite graphs admit a $k$-arc-connected orientation. However, we proved a more general version of (ii) in \cite[Thoerem~3.3]{Max2023orientation} which states that if $k$ is even and $A$ is a proper subset of $V(G)$ such that between any two vertices in $A$ there are $k$-edge-disjoint paths in $G$, and if there is no edge with both end-vertices outside $A$, then for any $s\notin A$ such that $\deg(s)>3$, the lifting graph at $s$ is either complete multipartite, or consists of an isolated vertex and a balanced complete bipartite graph, if our target is to only preserve $k$-edge-connectivity between vertices of $A$. We used this latter result to prove that the theorem of Nash-Williams, that $2k$-edge-connectivity suffices for a $k$-arc-connected orientation, holds for locally finite graphs with countably many ends.

\noindent \textbf{\large Outline of the paper:} In Section 2 we present the standard definition of a dangerous set, which is the main object used in the proofs, and some essential lemmas about it. Section 3 presents some new results on the structure of the lifting graph. In Section 4 we study the structure more carefully via its maximal independent sets and corresponding dangerous sets, and in Section 5 we focus on the complement of the lifting graph. Section 6 presents a particularly important consequence of the previous results, that is the description of the structure of a graph from the structure of its lifting graph when the complement of the lifting graph is connected. Finally, Section 7 is dedicated to the study of the lifting graph when $\deg(s)=3$.

\section{Dangerous sets}

\begin{par}
Ok, Richter, and Thomassen proved Theorem \ref{ORT} by induction on $\deg(s)$, and the proof of the base cases, as will the proof of our extension of this theorem, relied on the concept of a \textit{dangerous set}, and a standard equation \ref{intersection of two cuts} about two intersecting edge-cuts. Recall that an \textit{edge-cut}, or simply a \textit{cut}, in a graph $G$ consists of, for some partition $(A, B)$ of $V(G)$, the set $\delta_G(A)$ (or simply $\delta(A)$) of all edges having one end in $A$ and one end in $B$.
\end{par}

\begin{figure}[!h]
     \centering
     \includegraphics[scale=0.75]{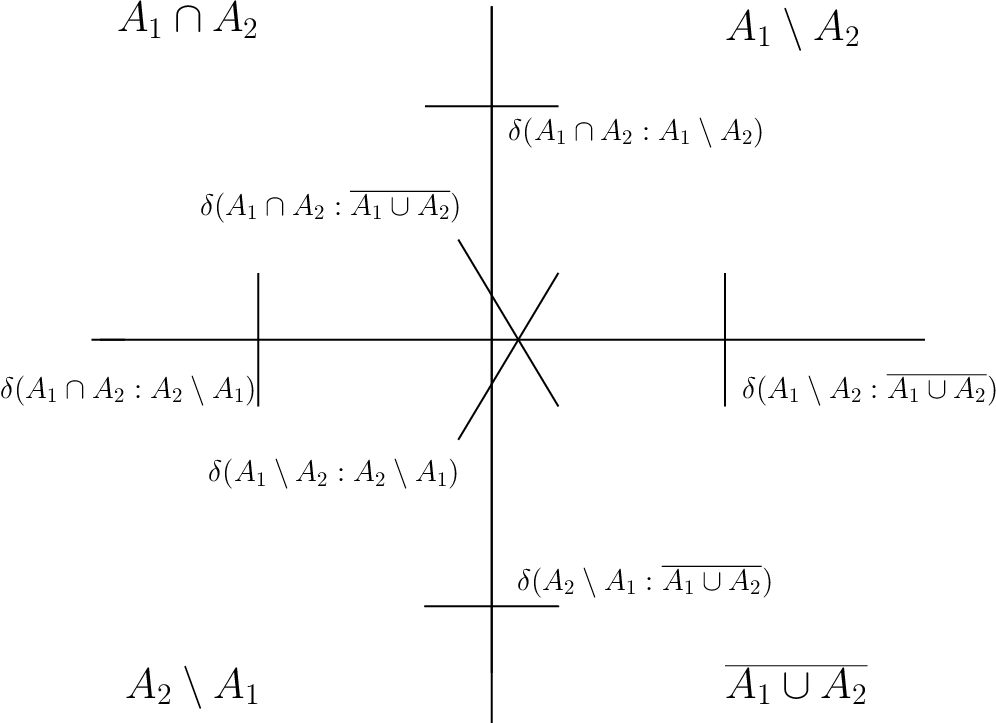}
     \caption{Two intersecting cuts.}
     \label{two cuts}
\end{figure}

\begin{par}
If $G$ is an $(s,k)$-edge-connected graph, then a subset $A$ of $V(G)\setminus \{s\}$ is \textit{$k$-dangerous}, or simply \textit{dangerous}, if $A\neq \emptyset$, $V(G)\setminus (A\cup\{s\})\neq \emptyset$, and $|\delta_G(A)|\leq k+1$. Dangerous sets are a standard tool in studying lifting and edge-connectivity. They can be seen in several papers by Frank and Jord\'an on splitting-off and connectivity augmentation, and also more propositions about them were introduced \cite{NetworkDesign} to prove results about degree bounded network designs with metric costs. 
\end{par}

\begin{par}
We will be dealing a lot with edge-cuts and so will be using the following version of Menger's Theorem \cite{Menger}. 
\end{par}

\begin{theorem}(Menger's Theorem)\label{Menger}\cite[Corollary~3.3.5(ii)]{Diestel}
Let $G$ be a graph and $x$ and $y$ two vertices in $G$. The maximum number of edge-disjoint paths between $x$ and $y$ equals the minimum size of an edge-cut separating $x$ from $y$ in $G$.
\end{theorem}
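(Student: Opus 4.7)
The plan is to prove the two inequalities separately. Writing $p(x,y)$ for the maximum number of edge-disjoint $x$--$y$ paths and $\lambda(x,y)$ for the minimum size of an $x$--$y$ edge-cut, the inequality $p(x,y) \le \lambda(x,y)$ is immediate: every $x$--$y$ cut intersects each $x$--$y$ path in at least one edge, and the edges contributed by distinct paths in an edge-disjoint packing must themselves be distinct.

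For the nontrivial direction $\lambda(x,y) \le p(x,y)$, my first choice would be a reduction to max-flow/min-cut. Replace each undirected edge of $G$ by a pair of anti-parallel arcs of unit capacity, producing a digraph $D$. A standard Ford--Fulkerson augmenting-path argument shows that the maximum $x$--$y$ flow value in $D$ equals the minimum capacity of an $x$--$y$ cut in $D$, and because all capacities are integral there is an integral maximum flow. Any integral unit-capacity flow of value $f$ decomposes into $f$ arc-disjoint directed walks from $x$ to $y$; after canceling any pair of anti-parallel arcs in $D$ coming from the same edge of $G$ (which does not decrease the value of the flow), these walks become edge-disjoint walks in $G$, each containing an $x$--$y$ path. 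On the cut side, every edge-cut $\delta_G(A)$ with $x \in A$, $y \notin A$ corresponds to an arc-cut in $D$ of capacity exactly $|\delta_G(A)|$, so the two minimum cut values agree and the extracted packing has size $\lambda(x,y)$.

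A purely combinatorial alternative, avoiding flows altogether, is induction on $|E(G)|$. Take a shortest $x$--$y$ path $P$ with first edge $e$. If $\lambda_{G-e}(x,y) = \lambda_G(x,y)$, the inductive hypothesis applied to $G - e$ already yields $\lambda(x,y)$ edge-disjoint paths. Otherwise $e$ lies in some minimum $x$--$y$ cut, and one locates a canonical minimum cut containing $e$ by appealing to the submodular inequality
\[
|\delta(A \cap B)| + |\delta(A \cup B)| \le |\delta(A)| + |\delta(B)|
\]
on two intersecting minimum cuts --- the same identity that governs the two-cuts picture in Figure \ref{two cuts}. Contracting the chosen side of that cut strictly reduces $|E(G)|$ while preserving $\lambda(x,y)$, and induction finishes the job.

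The main obstacle in either route is the final step of turning an optimum into \emph{edge-disjoint simple paths}: in the max-flow proof this is the anti-parallel arc cancellation that turns arc-disjoint directed walks back into edge-disjoint walks of $G$; in the inductive proof it is the cut-intersection bookkeeping that ensures the contraction is well defined and does not accidentally drop $\lambda(x,y)$. Both ultimately rest on the cut-submodularity that also underlies the dangerous-set machinery used in the rest of the paper.
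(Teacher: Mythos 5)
The paper does not prove Menger's Theorem; it states it with a citation to Menger's 1927 paper and uses it as a known tool. So there is no in-paper argument to compare yours against, and proposing a proof here is extra work the paper never asks for.

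That said, both of your routes are standard and in principle sound, but each sketch has a gap at the crucial step. In the max-flow route, the cancellation of anti-parallel arcs must happen \emph{before} decomposing the flow into walks, not after: cancelling after decomposition changes the walk structure you just built. Do the cancellation on the integral flow (it preserves flow value and conservation), then decompose the resulting support, in which each edge of $G$ is used by at most one arc, into arc-disjoint $x$--$y$ paths and cycles. In the inductive route, the phrase ``contracting the chosen side $\ldots$ and induction finishes the job'' skips the actual heart of the argument: after contracting $V(G)\setminus A$ to a single vertex you only get $\lambda$ edge-disjoint paths from $x$ to that new vertex, and you must separately contract $A$ to get $\lambda$ edge-disjoint paths from a new vertex to $y$, then splice the two families together across $\delta(A)$, using the fact that $|\delta(A)|=\lambda$ forces a bijection between the two families and the cut edges. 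You also need to rule out the degenerate case where every minimum cut is one of the trivial cuts $\delta(\{x\})$ or $\delta(\{y\})$, since then one of the two contractions does not reduce $|E(G)|$; that case is handled directly (take the edges at $x$, each begins a path, and greedily route them). Without the splicing and the degenerate case, the induction does not close.
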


\begin{par}
\noindent The following standard equation, about two intersecting cuts $\delta(A_1)$ and $\delta(A_2)$, is essential to our proofs. For a vertex set $A$, let $\overline{A}:= V(G)\setminus A$. If $B$ is another set of vertices, then the set of edges with one end in $A$ and one end in $B$ is denoted by $\delta(A:B)$. Rudimentary counting, with the help of Figure \ref{two cuts}, gives the equation.
\end{par}

\begin{equation} \label{intersection of two cuts}
\begin{split}
2 \bigg[ &\big|\delta(A_1)\big| +\big |\delta(A_2)\big|- \Big (\big|\delta(A_1 \cap A_2: \overline{A_1 \cup A_2})\big|+\big|\delta(A_2\setminus A_1: A_1\setminus A_2)\big|\Big) \bigg] 
\\
&= 
\big|\delta(A_1\cap A_2)\big|+\big|\delta(A_2\setminus A_1)\big|+ \big|\delta(A_1\setminus A_2)\big|+\big|\delta(\overline{A_1\cup A_2})\big|.
\end{split}
\end{equation}

\begin{par}
 The propositions about dangerous sets from \cite{NetworkDesign} were later developed and used by Ok, Richter, and Thomassen in \cite{ORT2016linkages} to prove results on linkages in infinite graphs. The following proposition is a special case of Theorem 1.1 in their paper \cite{ORT2016linkages}.
\end{par}

\begin{proposition}\cite{ORT2016linkages}\label{dangerous}
Let $G$ be $(s,k)$-edge-connected and let $s$ be a vertex of $G$ such that $s$ is not incident with a cut-edge and $\deg(s)\neq 3$. Let $F$ be any set of at least two edges, all incident with $s$. Then, no pair of edges in $F$ is $k$-liftable if and only if there is a $k$-dangerous set $A$ so that, for every $sv\in F$, $v\in A$.
\end{proposition}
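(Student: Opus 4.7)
The plan is to prove both directions, with the forward implication routine and the reverse by induction on $|F|$ combined with the uncrossing identity (\ref{intersection of two cuts}). For the ``if'' direction, suppose $A$ is $k$-dangerous with $v \in A$ for every $sv \in F$. Fix any pair $sv, sw \in F$. In $G_{v,w}$ the boundary edges $sv, sw$ are deleted from $\delta_G(A)$ and either the internal edge $vw$ is added (if $v \neq w$) or nothing is added (parallel case), so $|\delta_{G_{v,w}}(A)| = |\delta_G(A)| - 2 \leq k-1$. Since both $A$ and $\overline{A} \setminus \{s\}$ are nonempty by definition of dangerous, picking $x \in A$ and $y \in \overline{A} \setminus \{s\}$ and applying Menger's theorem in $G_{v,w}$ produces fewer than $k$ edge-disjoint $xy$-paths, witnessing that the lift fails $(s,k)$-edge-connectivity.

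For the ``only if'' direction I would induct on $|F|$. In the base case $|F|=2$ with $F=\{sv,sw\}$ non-liftable, $G_{v,w}$ is not $(s,k)$-edge-connected, so Menger yields a cut $\delta_{G_{v,w}}(A)$ of size at most $k-1$ with $s \notin A$ separating two non-$s$ vertices. A case split on the sides of $v$ and $w$ relative to $A$ finishes the base: if $v$ and $w$ are on opposite sides, or both lie in $\overline{A} \setminus \{s\}$, then $\delta(A)$ is unchanged by the lift, giving $|\delta_G(A)| < k$ and contradicting $(s,k)$-edge-connectivity of $G$. Hence $v, w \in A$, and then $|\delta_G(A)| = |\delta_{G_{v,w}}(A)| + 2 \leq k+1$ shows $A$ is the required dangerous witness.

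For the inductive step with $|F| = n \geq 3$, I would apply the inductive hypothesis to $F \setminus \{sv_n\}$ to obtain a dangerous $A_0$ containing $v_1, \ldots, v_{n-1}$, and to $\{sv_1, sv_n\}$ to obtain a dangerous $B$ containing $v_1$ and $v_n$. If $v_n \in A_0$ there is nothing to prove, so assume $v_n \notin A_0$; then $v_1 \in A_0 \cap B$ and the target is to show that $A_0 \cup B$ is itself $k$-dangerous, since it contains every endpoint of $F$. Provided $A_0 \cup B \neq V(G) \setminus \{s\}$, both $A_0 \cap B$ and $A_0 \cup B$ are admissible candidates with boundaries at least $k$ by $(s,k)$-edge-connectivity, and the submodular inequality drawn from (\ref{intersection of two cuts}), $|\delta(A_0 \cap B)| + |\delta(A_0 \cup B)| \leq |\delta(A_0)| + |\delta(B)| \leq 2k+2$, forces $|\delta(A_0 \cup B)| \leq k+2$.

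The hard part is the tight case $|\delta(A_0 \cup B)| = k+2$, together with the boundary case $A_0 \cup B = V(G) \setminus \{s\}$. In the tight case, equality throughout (\ref{intersection of two cuts}) pins down every quantity: $|\delta(A_0)| = |\delta(B)| = k+1$, $|\delta(A_0 \cap B)| = k$, and both crossing counts $|\delta(A_0 \setminus B : B \setminus A_0)|$ and $|\delta(A_0 \cap B : \overline{A_0 \cup B})|$ vanish. I would break this rigidity by selecting the witnesses extremally---for instance $B$ inclusion-minimal among $k$-dangerous sets containing $\{v_1, v_n\}$ and $A_0$ inclusion-maximal among those containing $\{v_1, \ldots, v_{n-1}\}$---and then exploit the no-crossing-edges structure to swap $A_0 \cap B$ in for $B$, producing either a strictly smaller dangerous witness for some non-liftable pair (contradicting minimality of $B$) or a dangerous set with strictly more endpoints than $A_0$ (contradicting its maximality). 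The boundary case $A_0 \cup B = V(G) \setminus \{s\}$ is handled in parallel by passing to complements $\overline{A_0}, \overline{B}$ and invoking the standing hypotheses $\deg(s) \neq 3$ and that $s$ is not incident with a cut-edge, which rule out the cut-size degeneracies that would otherwise obstruct the uncrossing step.
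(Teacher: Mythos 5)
The paper does not prove this proposition; it states it with a citation, remarking only that it is a special case of Theorem~1.1 of \cite{ORT}. So your proof is to be judged on its own terms, not against an in-paper argument.

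Your ``if'' direction is correct, and your base case $|F|=2$ is a clean restatement of Lemma~\ref{dangerous set for degree three}. The problem is the inductive step, which is where the work actually lives, and where the hypothesis $\deg(s)\neq 3$ must genuinely enter (the claim is false for $\deg(s)=3$ with $|F|=3$, by Lemma~\ref{no dangerous set with all three neighbours}). Two points, concretely. First, in the tight case $|\delta(A_0\cup B)|=k+2$ you assert that equality ``pins down every quantity,'' including the vanishing of \emph{both} crossing counts. That is not what the arithmetic gives. Equality in $|\delta(A_0\cap B)|+|\delta(A_0\cup B)|\le|\delta(A_0)|+|\delta(B)|$ forces only $|\delta(A_0\setminus B:B\setminus A_0)|=0$. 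The other count $\alpha=|\delta(A_0\cap B:\overline{A_0\cup B})|$ is controlled by the posimodular identity $|\delta(A_0\setminus B)|+|\delta(B\setminus A_0)|=|\delta(A_0)|+|\delta(B)|-2\alpha$, and if both difference sets are nonempty and proper this yields $2k\le 2k+2-2\alpha$, i.e.\ $\alpha\le 1$ --- it may equal $1$. Second, and more seriously, your proposed way out of the tight case --- ``swap $A_0\cap B$ in for $B$, contradicting minimality of $B$ or maximality of $A_0$'' --- does not produce a contradiction as stated. The set $A_0\cap B$ is indeed dangerous (it has boundary exactly $k$), but it does not contain $v_n$, so it is not a witness for the non-liftable pair $\{sv_1,sv_n\}$ and cannot contradict the inclusion-minimality of $B$ among such witnesses; nor is $A_0\cap B$ a proper superset of $A_0$ (it is a subset), so it cannot contradict the maximality of $A_0$ among dangerous sets containing $v_1,\dots,v_{n-1}$. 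Some genuinely new dangerous set has to be produced here, and the sketch does not produce one.

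The boundary case $A_0\cup B=V(G)\setminus\{s\}$ is, as you note, exactly where $\deg(s)\neq 3$ must be used, but ``pass to complements and invoke the standing hypotheses'' is too vague to stand as an argument. In that case $|\delta(A_0\cup B)|=\deg(s)$, submodularity gives only $\deg(s)\le k+2$, and $A_0\cup B$ fails the definition of a dangerous set outright (its complement minus $s$ is empty), so the argument must exhibit a \emph{different} dangerous set containing all the $v_i$'s --- some concrete use of Frank's theorem or of the cut-counting with $\delta(\{s\})$ is needed, and none is supplied. Until the tight and boundary cases are actually closed, the ``only if'' direction is a plausible plan, not a proof.
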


The core idea in this proposition can be seen more easily in the special case when $|F|=2$. Simply, if a pair is not liftable, this means that lifting it results in a small cut. We formalize this in the following lemma which, unlike Proposition \ref{dangerous}, allows the degree of $s$ to be $3$. Proposition \ref{dangerous} excluded the case $\deg(s)=3$ because in that case it is possible that no pair of edges is liftable, that is $\delta(s)$ is an independent set in $L(G,s,k)$, but then in this case there is no dangerous set containing all the three neighbours of $s$ as we will prove in Lemma \ref{no dangerous set with all three neighbours} below.

\begin{lemma}\label{dangerous set for degree three}
Let $G$ be an $(s,k)$-edge-connected graph such that $\deg(s)\geq 2$. If $su$ and $sv$ are edges in $G$ that are not liftable, then there is a dangerous set $A$ in $G$ containing $u$ and $v$.
\end{lemma}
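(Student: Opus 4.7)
The plan is to extract the dangerous set directly from a witness cut for the failure of liftability, via a short case analysis on how the lifting operation changes the size of an arbitrary cut $\delta(A)$ with $s\notin A$.

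First, I would unpack the hypothesis that $su,sv$ are not $k$-liftable. By definition, $G_{u,v}$ is not $(s,k)$-edge-connected, so by Menger's theorem there exist two vertices $x,y\in V(G)\setminus\{s\}$ and a set $A\subseteq V(G)$ separating them with $|\delta_{G_{u,v}}(A)|\le k-1$. After possibly replacing $A$ by its complement, I may assume $s\notin A$; then $A\neq\emptyset$ and $V(G)\setminus(A\cup\{s\})\neq\emptyset$, since $A$ contains one of $x,y$ and misses the other.

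Next I would compare $|\delta_{G_{u,v}}(A)|$ with $|\delta_G(A)|$ according to where $u$ and $v$ lie. If $u\neq v$, lifting removes $su,sv$ and adds $uv$; since $s\notin A$, the edges $su,sv$ contribute to $\delta(A)$ exactly according to whether $u,v\in A$, and the new edge $uv$ contributes only when exactly one of $u,v$ lies in $A$. A direct count shows
\[
|\delta_{G_{u,v}}(A)| \;=\;
\begin{cases}
|\delta_G(A)|-2, & \text{if } u,v\in A,\\
|\delta_G(A)|, & \text{if exactly one of } u,v\in A,\\
|\delta_G(A)|, & \text{if } u,v\notin A.
\end{cases}
\]
If $u=v$, lifting just deletes the two parallel edges, and a similar check gives $|\delta_{G_{u,v}}(A)|=|\delta_G(A)|-2$ when $u\in A$ and $|\delta_{G_{u,v}}(A)|=|\delta_G(A)|$ otherwise. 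In every case the cut size drops by at most $2$, and a drop occurs only when both $u,v\in A$.

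To finish, I would combine this with $(s,k)$-edge-connectivity of $G$: since $A$ and $V(G)\setminus(A\cup\{s\})$ each contain a non-$s$ vertex, Menger's theorem gives $|\delta_G(A)|\ge k$. Together with $|\delta_{G_{u,v}}(A)|\le k-1$, the case analysis forces $u,v\in A$ and
\[
|\delta_G(A)| \;=\; |\delta_{G_{u,v}}(A)|+2 \;\le\; k+1,
\]
so $A$ is a $k$-dangerous set containing both $u$ and $v$. I do not expect any real obstacle here; the only subtlety is being careful with the parallel-edge case $u=v$ (where lifting is defined as plain deletion, per the paper's convention) and with the orientation of $A$ versus its complement so that $s\notin A$.
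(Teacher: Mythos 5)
Your proof is correct and follows essentially the same route as the paper: extract a cut $\delta(A)$ of size at most $k-1$ in $G_{u,v}$ with $s\notin A$ separating two non-$s$ vertices, argue that both $u$ and $v$ must lie in $A$ (else the same cut would already violate $(s,k)$-edge-connectivity in $G$), and observe that $A$ is then $k$-dangerous in $G$. You merely make explicit the size comparison between $\delta_G(A)$ and $\delta_{G_{u,v}}(A)$ that the paper leaves implicit, and handle the parallel-edge case $u=v$ separately, which is a nice bit of added care.
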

\begin{proof}
If $su$ and $sv$ are not liftable, then there is a cut $\delta(A)$ in $G_{u,v}$ of size at most $k-1$ such that $s\notin A$ and such that there are two vertices of $G_{u,v}-s$ on different sides of the cut. Both $u$ and $v$ have to be on the side of the cut that does not contain $s$, i.e. $A$, otherwise this gives a cut in $G$ of size at most $k-1$ that is not $\delta(\{s\})$, contradicting that $G$ is $(s,k)$-edge-connected. The side of the cut, $A$, containing $u$ and $v$ but not $s$ is the desired dangerous set.
\end{proof}

\begin{lemma}\label{no dangerous set with all three neighbours}
Let $G$ be an $(s,k)$-edge-connected graph such that $\deg(s)=3$. Any set of vertices in $G$ containing all the neighbours of $s$ is not dangerous.
\end{lemma}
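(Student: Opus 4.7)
The plan is to argue by contradiction: suppose $A\subseteq V(G)\setminus\{s\}$ is $k$-dangerous and contains every neighbour of $s$. Because $s\notin A$ while every neighbour of $s$ lies in $A$, each of the three edges incident with $s$ has one end in $A$ and its other end (namely $s$) outside $A$, so all three belong to $\delta(A)$. Together with the dangerous bound $|\delta(A)|\leq k+1$, this shows that at most $k-2$ edges of $\delta(A)$ are not incident with $s$. This would be the first step.

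The second step uses the two nonemptiness conditions from the definition of ``dangerous'': I pick some $x\in A$ and some $y\in V(G)\setminus(A\cup\{s\})$ and apply Menger's theorem, which is available because $G$ is $(s,k)$-edge-connected, to produce $k$ pairwise edge-disjoint $x$--$y$ paths. Each such path starts in $A$ and ends outside $A$, so it must cross the cut $\delta(A)$ at least once, and by edge-disjointness these crossings are distinct edges of $\delta(A)$.

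The key step is an edge-count at $s$. Since neither $x$ nor $y$ equals $s$, any one of the paths that visits $s$ must enter and leave $s$ along two of the three edges incident with $s$; as the paths are edge-disjoint and $\deg(s)=3$, at most one of them can visit $s$. The remaining $k-1$ paths avoid $s$ entirely, so each of them contributes an edge of $\delta(A)$ that is not incident with $s$, yielding at least $k-1$ distinct such edges. This contradicts the upper bound of $k-2$ obtained in the first step, and the lemma follows.

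The only conceptual obstacle is noticing that a single path through $s$ consumes two of the three $s$-edges at once, which is precisely what prevents all but one of the paths from ``hiding'' their $\delta(A)$-crossings among the $s$-edges; once that is observed, the rest is elementary counting.
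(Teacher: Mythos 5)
Your proof is correct, and the underlying estimate is the same one the paper uses: since all three neighbours of $s$ lie in $A$, the cut $\delta(A)$ contains the three $s$-edges, leaving at most $(k+1)-3=k-2$ edges of $\delta(A)$ that avoid $s$. Where you diverge is in how you turn this into a contradiction. The paper simply shifts $s$ to the other side: it observes that $\delta(A\cup\{s\})=\delta(V(G)\setminus(A\cup\{s\}))$ is a cut of size at most $k-2$ separating a vertex of $A$ from a vertex of $A'=V(G)\setminus(A\cup\{s\})$, both different from $s$, which immediately violates $(s,k)$-edge-connectivity with no further work. You instead stay with the cut $\delta(A)$ and count crossings of the $k$ Menger paths, for which you need the extra observation that $\deg(s)=3$ forces at most one of the paths to route through $s$. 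Both are sound; the paper's choice of cut renders that extra step about paths through $s$ unnecessary, giving a one-line finish, while your version makes the role of the degree-$3$ hypothesis a bit more explicit. Either way, the lemma is fully proved.
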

\begin{proof}
If $A$ is a dangerous set, then $|\delta(A)|\leq k+1$. By definition of a dangerous set, $A'=V(G)\setminus (A\cup \{s\})$ is not empty. If all neighbours of $s$ are in $A$, then $|\delta(A')|=|\delta(A\cup \{s\})|\leq (k+1)-3=k-2$, contradicting the fact that $G$ is $(s,k)$-edge-connected.
\end{proof}

\begin{par}
Note that a set $F$ of edges incident with $s$ in $G$ such that no pair of edges in $F$ is $k$-liftable corresponds to an independent set of vertices in the lifting graph $L(G,s,k)$.
 If $G$ is $(s,k)$-edge-connected, and $F$ is an independent set of edges in $L(G,s,k)$, then a dangerous set $A$ as given by Proposition \ref{dangerous} is a dangerous set \textit{corresponding} to $F$.
\end{par}

\begin{par}
Such a dangerous set is not necessarily unique. Consider for example a graph $G$ that consists of a vertex $s$ and three disjoint sets of vertices $A_1$, $A$, and $A_2$, where $s$ has $l\leq \frac{k}{2}$ neighbours in each one of $A_1$ and $A_2$, but no neighbours in $A$, and such that there are $(k+1)-l$ edges between $A_1$ and $A$, and between $A_2$ and $A$, but no edges between $A_1$ and $A_2$. Let $F$ be the set of edges incident with $s$ whose other end-vertices are in $A_1$. Then $A_1$ and $A\cup A_1$ are two different dangerous sets corresponding to $F$.
\end{par}
\begin{par}
However, for every independent set in $L(G,s,k)$, not necessarily maximal, there is a unique minimal corresponding dangerous set in $G$. We prove this in the following lemma. We add the condition $I\neq V(L(G,s,k))$ because when $\deg(s)=3$ it is possible that $V(L(G,s,k))$ is independent, but by Lemma \ref{no dangerous set with all three neighbours} there is no dangerous set corresponding to it.
\end{par}

\begin{lemma} \label{unique minimal dangerous}
Let $G$ be $(s,k)$-edge-connected such that $s$ is not incident with a cut-edge, and let $I$ be an independent set of size at least $2$ in $L(G,s,k)$ such that $I\neq V(L(G,s,k))$. Then $I$ has a unique minimal corresponding dangerous set in $G$.
\end{lemma}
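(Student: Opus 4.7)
My plan is to proceed by contradiction: suppose $A_1$ and $A_2$ are two distinct minimal dangerous sets corresponding to $I$, and derive a contradiction by showing that such a configuration forces $|I|\le 1$. The main tool will be a posimodular identity for the cut function applied to $A_1$ and $A_2$, which exploits the fact that $|I|\ge 2$ edges from $s$ must cross between $A_1\cap A_2$ and the piece containing $s$.

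First I would note that, because $A_1$ and $A_2$ are minimal and distinct, neither is contained in the other; otherwise the smaller one is a strictly smaller corresponding dangerous set, contradicting minimality. Hence $A_1\setminus A_2$ and $A_2\setminus A_1$ are both nonempty. Also $A_1\cap A_2$ is nonempty since it contains the non-$s$ endpoint of every edge in $I$. Then by direct edge-counting (in the spirit of equation \eqref{intersection of two cuts}, using Figure \ref{two cuts}), one obtains the posimodular identity
\[ |\delta(A_1)|+|\delta(A_2)| - |\delta(A_1\setminus A_2)| - |\delta(A_2\setminus A_1)| = 2\,|\delta(A_1\cap A_2 : \overline{A_1\cup A_2})|. \]
Since $s\in \overline{A_1\cup A_2}$ and each edge $sv\in I$ has $v\in A_1\cap A_2$, the right-hand side is at least $2|I|\ge 4$.

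Next, $(s,k)$-edge-connectivity gives $|\delta(A_1\setminus A_2)|,|\delta(A_2\setminus A_1)|\ge k$: each difference is a nonempty subset of $V(G)\setminus\{s\}$ whose complement contains the opposite difference (hence non-$s$ vertices on both sides of the cut). Combining with the dangerous bounds $|\delta(A_1)|,|\delta(A_2)|\le k+1$, the identity rearranges to
\[ 2k \;\le\; |\delta(A_1\setminus A_2)|+|\delta(A_2\setminus A_1)| \;=\; |\delta(A_1)|+|\delta(A_2)| - 2\,|\delta(A_1\cap A_2 : \overline{A_1\cup A_2})| \;\le\; 2(k+1) - 4 \;=\; 2k-2, \]
which is the desired contradiction. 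Therefore $A_1=A_2$.

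The principal care point is the bookkeeping in the posimodular identity, and in particular remembering that $s\in \overline{A_1\cup A_2}$, which is what converts the lower bound $|I|\ge 2$ into a lower bound on $|\delta(A_1\cap A_2 : \overline{A_1\cup A_2})|$. The hypothesis $I\ne V(L(G,s,k))$ does not enter the uniqueness step above, but is needed beforehand to guarantee that at least one corresponding dangerous set exists: when $\deg(s)\ge 4$ this is Proposition \ref{dangerous}, and when $\deg(s)=3$ the hypothesis rules out the obstruction of Lemma \ref{no dangerous set with all three neighbours} while Lemma \ref{dangerous set for degree three} supplies an existing corresponding dangerous set for a pair; once some such dangerous set exists and $|I|\ge 2$, the argument above shows the minimal one is unique.
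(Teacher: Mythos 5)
Your argument is correct, and it takes a genuinely different route from the paper's proof.

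Both proofs start from the same setup and the same use of minimality (two distinct minimal corresponding dangerous sets $A_1,A_2$ must be $\subseteq$-incomparable, so $A_1\setminus A_2$ and $A_2\setminus A_1$ are nonempty), but the central inequality is organized differently. The paper works with the ``intersection--union'' decomposition of equation \eqref{intersection of two cuts}: it notes that by minimality $A_1\cap A_2$ is not dangerous, so $|\delta(A_1\cap A_2)|\ge k+2$; then it has to rule out $\overline{A_1\cup A_2\cup\{s\}}=\emptyset$ via Frank's Theorem \ref{Frank}, and closes with a three-term averaging argument on the quantities $\ell_1,\ell_2,\ell_3$ to force $\ell_2\le 1$. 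You instead use the posimodular identity
\[
|\delta(A_1)|+|\delta(A_2)| = |\delta(A_1\setminus A_2)|+|\delta(A_2\setminus A_1)| + 2\,|\delta(A_1\cap A_2:\overline{A_1\cup A_2})|,
\]
which isolates exactly the term that must be at least $|I|\ge 2$ (because $s\in\overline{A_1\cup A_2}$ and every $sv\in I$ has $v\in A_1\cap A_2$). Combining with $|\delta(A_i)|\le k+1$ and $|\delta(A_i\setminus A_{3-i})|\ge k$ gives $2(k+1)\ge 2k+4$ immediately. Your route avoids Frank's Theorem, avoids the case split on whether $\overline{A_1\cup A_2\cup\{s\}}$ is empty, and avoids the $\ell_i$ bookkeeping entirely; what it leans on instead is the lower bound $|\delta(A_i\setminus A_{3-i})|\ge k$, which follows from $(s,k)$-edge-connectivity because $A_1\setminus A_2$ and $A_2\setminus A_1$ are nonempty sets of non-$s$ vertices on opposite sides of each cut. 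That bound plays the role that ``$A_1\cap A_2$ not dangerous'' plays in the paper, but it feeds more directly into the identity. Your remark on where the hypothesis $I\ne V(L(G,s,k))$ enters (only to secure existence, via Proposition \ref{dangerous} or Lemma \ref{dangerous set for degree three}, and ruled out by Lemma \ref{no dangerous set with all three neighbours} when $\deg(s)=3$ and $I=\delta(s)$) matches the paper's discussion.
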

\begin{proof}
\begin{par}
 Proposition \ref{dangerous} guarantees the existence of at least one dangerous set corresponding to $I$ in case $\deg(s)\neq 3$. Lemma \ref{dangerous set for degree three} guarantees the existence of a dangerous set when $\deg(s)=3$ and $I\neq V(L(G,s,k))$. By way of contradiction, suppose that $I$ has two distinct minimal corresponding dangerous sets, $A_1$ and $A_2$, in $G$.
 Note that for $\{i,j\}=\{1,2\}$, 
 $$k+1\geq |\delta(A_i)| = |\delta(\{s\}: A_i)|+|\delta(A_i : A_j\setminus A_i)|+ |\delta(A_i : \overline{A_1\cup A_2 \cup \{s\}})|.$$
 
 \noindent Thus, 
 $$2(k+1)\geq |\delta(A_1)|+|\delta(A_2)|\geq |\delta(A_1\cap A_2)| + |\delta(A_1\cup A_2 : \overline{A_1\cup A_2 \cup \{s\}})|+ |\delta(\{s\}: A_1\cup A_2)|.$$
 
 By minimality, $A_1\cap A_2$ is not dangerous, so $|\delta(A_1\cap A_2)| \geq k+2$. Therefore we have, $|\delta(A_1\cup A_2 : \overline{A_1\cup A_2 \cup \{s\}})|+ |\delta(\{s\}: A_1\cup A_2)|\leq k$. We first show that $\overline{A_1\cup A_2 \cup \{s\}}$ cannot be empty. If it is empty, then $\delta(\{s\}) = \delta(\{s\}: A_1\cup A_2)$, so any edge in $\delta(\{s\}, A_1\cap A_2)$ is an edge that is not liftable with any other edge in $\delta(\{s\})$. Thus, by Frank's Theorem, $|\delta(\{s\}: A_1\cap A_2)|=1$, contradicting our assumption that $|I|\geq 2$.
 \end{par}
 \begin{par}
 Now since $\overline{A_1\cup A_2 \cup \{s\}}$ is not empty, $|\delta(A_1\cup A_2 : \overline{A_1\cup A_2 \cup \{s\}})|+ |\delta(\{s\}: A_1\cup A_2)|\leq k$, and $G$ is $(s,k)$-edge-connected, it follows that 
 $$|\delta(A_1\cup A_2 : \overline{A_1\cup A_2 \cup \{s\}})|+ |\delta(\{s\}: A_1\cup A_2)|= k.$$
 \end{par}
\begin{par}
Let $\ell_1=|\delta(\{s\}: A_1\setminus A_2)|+ |\delta(\overline{A_1\cup A_2 \cup \{s\}}:A_1\setminus A_2)|$, $\ell_2 = |\delta(\{s\}: A_1\cap A_2)|+|\delta(\overline{A_1\cup A_2 \cup \{s\}}: A_1\cap A_2)|$, and $\ell_3= |\delta(\{s\}: A_2\setminus A_1)|+|\delta(\overline{A_1\cup A_2 \cup \{s\}}:A_2\setminus A_1)|$.
\end{par}
 \begin{par}
 Note that $|\delta(A_1\cup A_2 : \overline{A_1\cup A_2 \cup \{s\}})|+ |\delta(\{s\}: A_1\cup A_2)|= \ell_1+ \ell_2 + \ell_3$. Therefore, $\ell_1+ \ell_2 + \ell_3 = k$. Note also that, because $G$ is $(s,k)$-edge-connected, and $A_2$ is dangerous, we have $k\leq \ell_1 + |\delta(A_1\setminus A_2 : A_2)|$, and $\ell_2 + \ell_3 + |\delta(A_1\setminus A_2 : A_2)| \leq k+1$. Thus, $l_2+l_3\leq |\delta(A_1\setminus A_2 : A_2)| \leq \ell_1 +1$. By symmetry, it follows that we also have $\ell_1 +\ell_2 \leq \ell_3+1$. This means that $\ell_2\leq 1$, contradicting the assumption that $|I|\geq 2$. This completes the proof.
 \end{par}\end{proof}

\begin{par}
Now we can define the following.
\end{par}

\begin{definition}\label{the corresponding dangerous} (The Corresponding Dangerous Set)
Let $G$ be an $(s,k)$-edge-connected graph such that $s$ is not incident with a cut-edge, and let $F\subsetneq \delta(\{s\})$ be of size at least $2$ such that no pair of edges in $F$ is liftable. \textit{The dangerous set corresponding to} $F$ is the unique minimal dangerous set $A$ in $G$ containing the non-$s$ end-vertices of the edges in $F$.
\end{definition}

\begin{par}
Most of the claims in this paper work by taking any corresponding dangerous set, not necessarily the minimal one. However, the ones concerning the path and cycle structure need the dangerous sets to be minimal to avoid unnecessary intersections and preserve the structures. This will be reflected in the statements.
\end{par}

The following is Lemma 3.2 in \cite{ORT2016linkages}. It will be needed in our proofs.

\begin{lemma} \cite{ORT2016linkages}\label{intersect}
Let $G$ be an $(s,k)$-edge-connected graph. For $i=1,2$, let $F_i$ be an independent set in $L(G,s,k)$ of size $r_i$ and suppose there is a dangerous set $A_i$ so that $F_i= \delta(\{s\})\cap \delta(A_i)$. Set $\alpha= |F_1\cap F_2|$. If $\alpha > 0$, $r_1> \alpha$, $r_2> \alpha$, and $\overline{A_1\cup A_2 \cup \{s\}}\neq \emptyset$, then $r_1+r_2\leq\lfloor{\deg(s)/2}\rfloor + 2 $.
\end{lemma}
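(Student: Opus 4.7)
The plan is to squeeze the upper bounds $|\delta(A_i)|\le k+1$ (the dangerousness of $A_i$) against the lower bounds $|\delta(X)|\ge k$, which $(s,k)$-edge-connectedness forces on every $X\subseteq V(G)\setminus\{s\}$ with a non-empty non-$s$ complement, applied to carefully chosen sets $X$. I would introduce shorthand for the edge counts between the five atomic regions $A_1\cap A_2$, $A_1\setminus A_2$, $A_2\setminus A_1$, $\overline{A_1\cup A_2\cup\{s\}}$, and $\{s\}$, and track them through the argument.

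First, rearranging \eqref{intersection of two cuts} (or by direct edge-counting) gives the identity
\[
|\delta(A_1\setminus A_2)|+|\delta(A_2\setminus A_1)|=|\delta(A_1)|+|\delta(A_2)|-2\alpha-2\,|\delta(A_1\cap A_2:\overline{A_1\cup A_2\cup\{s\}})|.
\]
The hypotheses $r_i>\alpha$ ensure that both crescents $A_1\setminus A_2$ and $A_2\setminus A_1$ are non-empty and each has a non-empty non-$s$ complement (namely the other crescent), so the left side is at least $2k$. Combined with $|\delta(A_i)|\le k+1$, this forces $\alpha+|\delta(A_1\cap A_2:\overline{A_1\cup A_2\cup\{s\}})|\le 1$, which together with $\alpha>0$ pins $\alpha=1$ and eliminates all edges from $A_1\cap A_2$ to $\overline{A_1\cup A_2\cup\{s\}}$.

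Next, summing $|\delta(A_1)|+|\delta(A_2)|\le 2k+2$ and substituting the lower bound $|\delta(A_1\cap A_2)|\ge k$, I would deduce that the number of edges going from the two crescents to $\overline{A_1\cup A_2\cup\{s\}}$ is at most $k+3-r_1-r_2$. The decisive final step is to consider the cut $\delta(A_1\cup A_2\cup\{s\})$: this is where the hypothesis $\overline{A_1\cup A_2\cup\{s\}}\ne\emptyset$ enters, yielding $|\delta(A_1\cup A_2\cup\{s\})|\ge k$. Combining this with the previous bound shows that the number of edges from $s$ to $\overline{A_1\cup A_2\cup\{s\}}$ is at least $r_1+r_2-3$; adding the $r_1+r_2-1$ edges from $s$ into $A_1\cup A_2$ then gives $\deg(s)\ge 2(r_1+r_2)-4$, and the desired $r_1+r_2\le\lfloor\deg(s)/2\rfloor+2$ follows by taking integer parts.

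The main obstacle is identifying the correct set to cut on in the final step: one must enlarge $A_1\cup A_2$ by $\{s\}$, since only this brings $\deg(s)$ into the inequality, and the hypothesis $\overline{A_1\cup A_2\cup\{s\}}\ne\emptyset$ is precisely what licenses applying $(s,k)$-edge-connectedness to the enlarged set. The forced equality $\alpha=1$ in the opening step is what allows the bookkeeping to collapse into this clean bound; without it, the parallel computation yields only the weaker $r_1+r_2\le\deg(s)/2+1+\alpha$.
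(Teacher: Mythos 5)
The paper does not prove this lemma---it is Lemma~3.2 of Ok, Richter, and Thomassen \cite{ORT}, cited without proof---so there is no in-paper argument to compare against. Your proof is nevertheless correct: the rearrangement of \eqref{intersection of two cuts} is the submodularity identity $|\delta(A_1)|+|\delta(A_2)|=|\delta(A_1\setminus A_2)|+|\delta(A_2\setminus A_1)|+2|\delta(A_1\cap A_2:\overline{A_1\cup A_2})|$, the hypotheses $r_i>\alpha$ do guarantee both crescents are nonempty (so each has cut size at least $k$), squeezing against $|\delta(A_i)|\le k+1$ pins $\alpha=1$ and kills the edges from $A_1\cap A_2$ into $\overline{A_1\cup A_2\cup\{s\}}$, the dual identity together with $|\delta(A_1\cap A_2)|\ge k$ gives $|\delta(A_1\cup A_2)|\le k+2$ and hence $e_{\mathrm{out}}\le k+3-r_1-r_2$, and finally $\overline{A_1\cup A_2\cup\{s\}}\ne\emptyset$ licenses $|\delta(A_1\cup A_2\cup\{s\})|\ge k$, which forces at least $r_1+r_2-3$ edges from $s$ to the outside. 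Adding these to the $r_1+r_2-1$ edges from $s$ into $A_1\cup A_2$ gives $\deg(s)\ge 2(r_1+r_2)-4$, and the floor follows by integrality. Your closing remark is also accurate: without first forcing $\alpha=1$ the same bookkeeping yields only $r_1+r_2\le\deg(s)/2+1+\alpha$, which is weaker when $\alpha\ge 2$.
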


We will also need Lemma 3.3 from \cite{ORT2016linkages}. The first part follows from Franks theorem \ref{Frank}. The second part was proved using dangerous sets.
\begin{lemma}\cite{ORT2016linkages}\label{max}
Let $G$ be an $(s,k)$-edge-connected graph such that $s$ is not incident with a cut-edge.
\begin{enumerate}
    \item [(i)] If $\deg(s)$ is at least $4$, then every independent set in $L(G,s,k)$ has size at most $\lceil{\deg(s)/2}\rceil$ and;
    \item[(ii)] If $\deg(s)$ is even and at least $6$, then any two distinct independent sets in $L(G,s,k)$ of size $\frac{1}{2} \deg(s)$ are disjoint. 
\end{enumerate}
\end{lemma}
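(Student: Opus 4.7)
Part (i) is a routine consequence of Frank's Theorem~\ref{Frank}. That theorem furnishes $\lfloor\deg(s)/2\rfloor$ pairwise disjoint $k$-liftable pairs of edges at $s$, that is, a matching $M$ of that size in $L(G,s,k)$. Any independent set contains at most one endpoint of each edge of $M$ together with all $\deg(s)-2\lfloor\deg(s)/2\rfloor$ unmatched vertices, so its size is at most $\lceil\deg(s)/2\rceil$.

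For part (ii) I would argue by contradiction. Suppose $F_1\neq F_2$ are two maximum independent sets of size $\deg(s)/2$ with $\alpha:=|F_1\cap F_2|\geq 1$. Since $\deg(s)\geq 6$, each $F_i$ has size $\geq 3$, so Proposition~\ref{dangerous} gives a dangerous set $A_i$ with $F_i\subseteq \delta(\{s\})\cap\delta(A_i)$. The right-hand side is itself an independent set in $L(G,s,k)$ (apply Proposition~\ref{dangerous} in the other direction), so the maximality provided by part (i) forces equality $F_i=\delta(\{s\})\cap\delta(A_i)$. The plan is now to apply Lemma~\ref{intersect} to the pair $(F_1,F_2)$: its conclusion would read $\deg(s)=r_1+r_2\leq\lfloor\deg(s)/2\rfloor+2$, contradicting $\deg(s)\geq 6$. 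The hypotheses $\alpha>0$ and $\alpha<\deg(s)/2=r_1=r_2$ are immediate from $F_1\neq F_2$.

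The main obstacle is the remaining hypothesis $\overline{A_1\cup A_2\cup\{s\}}\neq\emptyset$ of Lemma~\ref{intersect}, which need not hold. To overcome this I would switch to a complementary dangerous set. A direct count gives $|\delta(A_i')|=|\delta(A_i)|+\deg(s)-2|F_i|$ where $A_i':=V(G)\setminus(A_i\cup\{s\})$; when $|F_i|=\deg(s)/2$ this reduces to $|\delta(A_i')|=|\delta(A_i)|\leq k+1$, so $A_i'$ is again dangerous, and one checks that the corresponding maximum independent set is $F_i':=\delta(\{s\})\setminus F_i$. Now if $A_1\cup A_2=V(G)\setminus\{s\}$, the fact that $A_2$ is dangerous (hence $V(G)\setminus(A_2\cup\{s\})\neq\emptyset$) forces $A_1\setminus A_2\neq\emptyset$, and $\overline{A_1'\cup A_2\cup\{s\}}=A_1\setminus A_2$ is then nonempty. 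The intersection $|F_1'\cap F_2|=\deg(s)/2-\alpha$ lies strictly between $0$ and $\deg(s)/2$ since $0<\alpha<\deg(s)/2$, so all hypotheses of Lemma~\ref{intersect} hold for $(F_1',F_2)$ and yield the same contradiction $\deg(s)\leq 4$. The hard part of the argument is precisely this switch: recognising that when the ``uncovered vertex'' hypothesis fails at $(F_1,F_2)$, complementing one of the dangerous sets restores that hypothesis while preserving maximality and the nonempty-intersection requirement.
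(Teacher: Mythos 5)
Your proof is correct. The paper does not actually spell out a proof of this lemma — it cites it from \cite{ORT} and only remarks that part (i) follows from Frank's Theorem~\ref{Frank} and part (ii) was proved via dangerous sets; your argument matches that sketch exactly, deriving part (i) from the Frank matching and part (ii) from Lemma~\ref{intersect} (which is Lemma 3.2 of \cite{ORT}, the result immediately preceding Lemma 3.3 there), and your complementation trick for the case $A_1\cup A_2=V(G)\setminus\{s\}$ is a clean and valid way to restore the ``uncovered vertex'' hypothesis of that lemma.
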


\section{Structure of the lifting graph}

\begin{par}
We extend Theorem \ref{ORT} of Ok, Richter, and Thomassen, proving more about the structure of the lifting graph. Theorem \ref{ORT} was proved by induction on the degree of the vertex whose edges are lifted, or in other words the number of vertices of the lifting graph at that vertex. Not all the properties Ok, Richter, and Thomassen showed at the base cases of degrees $4$ and $5$ were carried over in the induction in their proof. We show, without induction, through the study of maximal independent sets that the specific structures they found in the base cases hold in general. We summarize the results of this section in Theorem \ref{summary of lifting graph facts}. In the coming sections we will prove more about its maximal independent sets and its complement. The first proposition below generalizes what was proved for $\deg(5)$ in \cite{ORT2016linkages} to arbitrary odd degree.
\end{par}

\begin{figure}[!h]
     \centering
     \includegraphics[scale=0.8]{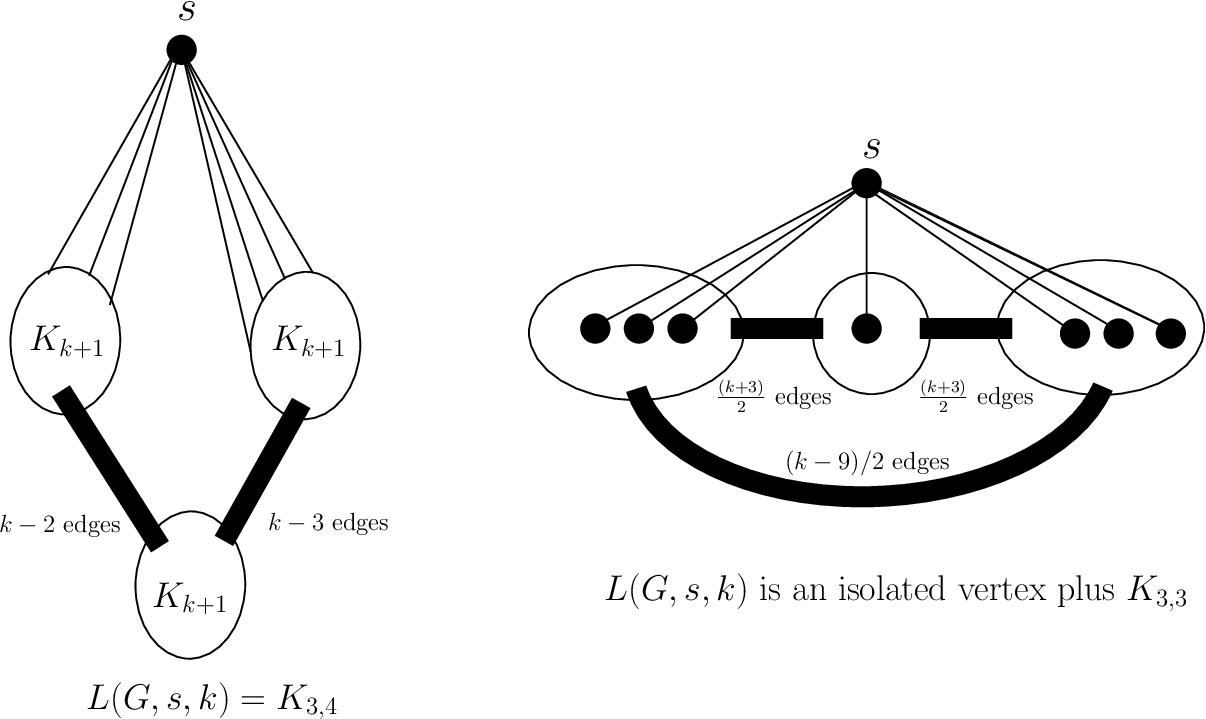}
     \caption{Examples with $\deg(s)=7$ and an independent sets of size $\lceil \deg(s)/2 \rceil$ in $L(G,s,k)$.}
     \label{Bipartite_and_isolated_vertex}
\end{figure}

\begin{proposition}\label{odd}
Let $G$ be an $(s,k)$-edge-connected graph such that $k\geq 2$. Assume that $L(G,s,k)$ contains an independent set of size $\lceil{\deg(s)/2}\rceil$. If $\deg(s)$ is odd and at least $5$, then $L(G,s,k)$ is one of the following (see Figure \ref{Bipartite_and_isolated_vertex}):
                  \begin{enumerate}
                  \item[(i)] a complete bipartite graph with one side of size $\lceil{\deg(s)/2}\rceil$ and the other of size $\lfloor{\deg(s)/2}\rfloor$, or
                  
                  \item[(ii)] an isolated vertex plus a complete bipartite graph with both sides of size $\frac{(\deg(s)-1)}{2}$.
                      
                  \end{enumerate}
\end{proposition}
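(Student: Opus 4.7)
The plan is to fix a corresponding dangerous set for $I$, read off the structure of its complement, and then rule out all but the two desired configurations using two applications of Lemma~\ref{intersect}. Write $\deg(s)=2m+1$ with $m\geq 2$, let $I=\{sv_1,\ldots,sv_{m+1}\}$ be the given independent set, and set $J:=\delta(\{s\})\setminus I=\{sw_1,\ldots,sw_m\}$. By Proposition~\ref{dangerous}, $I$ has a corresponding dangerous set $A$, and all $v_i\in A$.

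The first step pins down $A$ and $B:=V(G)\setminus(A\cup\{s\})$ exactly. If $a$ of the $sw_j$ have their other end in $A$ and $e:=|\delta(A:B)|$, then $|\delta(A)|=(m+1)+a+e\leq k+1$, while $(s,k)$-edge-connectedness applied to any pair of vertices in $A\times B$ forces $|\delta(B)|=(m-a)+e\geq k$. Adding these gives $a=0$, $e=k-m$, $|\delta(A)|=k+1$, and $|\delta(B)|=k$. In particular every $w_j$ lies in $B$, and $B$ is itself dangerous; by the dangerous-set characterisation of non-liftable pairs, $J$ is then a second independent set of $L(G,s,k)$.

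The main step is to prove that at most one vertex of $I$ can be blocked from being liftable with a vertex of $J$; call $v_\ell$ \emph{special} if some dangerous set $A''$ contains $v_\ell$ together with some $w_j$. I apply Lemma~\ref{intersect} to $F_1=I$ (with $A_1=A$, $r_1=m+1$) and $F_2=\delta(\{s\})\cap\delta(A'')$ (with $A_2=A''$), letting $\alpha$, $\beta$ count the $v_i$'s and $w_j$'s in $A''$ respectively, so $r_2=\alpha+\beta\leq m+1$ by Lemma~\ref{max}(i). The case $\alpha=m+1$ is excluded by this bound (it would force $\beta=0$), and the case $\overline{A\cup A''\cup\{s\}}\neq\emptyset$ is excluded by Lemma~\ref{intersect}, which would give $\alpha+\beta\leq 1$ against $\alpha,\beta\geq 1$. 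What remains is $B\subseteq A''$, forcing $\beta=m$ and then $\alpha=1$: so $A''$ contains all of $B$ and meets $\{v_1,\ldots,v_{m+1}\}$ in exactly one vertex $v_\ell$. Since the same $A''$ witnesses non-liftability of $sv_\ell$ with every $sw_j$, as soon as $v_\ell$ is special the vertex $sv_\ell$ is isolated in $L(G,s,k)$.

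To rule out two distinct special vertices $v_\ell\neq v_{\ell'}$ with corresponding dangerous sets $A''_\ell,A''_{\ell'}$, I apply Lemma~\ref{intersect} a second time with $F'_1=\{sv_\ell\}\cup J$ and $F'_2=\{sv_{\ell'}\}\cup J$: both are independent of size $m+1$, with $\alpha=m$, and $\overline{A''_\ell\cup A''_{\ell'}\cup\{s\}}$ is nonempty because it contains the $m-1\geq 1$ remaining $v_i$'s --- this is where the assumption $\deg(s)\geq 5$ is genuinely used. Lemma~\ref{intersect} then gives $2m+2\leq m+2$, a contradiction. Thus: if no $v_\ell$ is special then every cross-pair $sv_i,sw_j$ is liftable and $L(G,s,k)=K_{m+1,m}$, giving (i); otherwise the unique special $v_\ell$ is isolated in $L(G,s,k)$ and the remaining $2m$ vertices split as the independent sets $I\setminus\{sv_\ell\}$ and $J$ of size $m$ with all cross-pairs liftable, producing an isolated vertex together with $K_{m,m}$, giving (ii). The main obstacle is the case analysis in the third paragraph: each call of Lemma~\ref{intersect} demands a careful check of all four of its hypotheses (positivity of $\alpha$, the strict inequalities $r_i>\alpha$, and the nonempty-complement condition), and everything else reduces to the cut-counting identity already done in Step~1.
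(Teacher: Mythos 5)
Your proof is correct and takes essentially the same approach as the paper: both fix a dangerous set $A$ for $I$, show $J=\delta(\{s\})\setminus I$ is independent via the complementary dangerous set $B$, and use Lemma~\ref{intersect} (with Lemma~\ref{max} bounding $r_2$) to force $\overline{A\cup A''\cup\{s\}}=\emptyset$ and conclude a blocked $sv_\ell$ must be isolated. The only variation is at the end: the paper invokes Frank's Theorem~\ref{Frank} to get uniqueness of the isolated vertex, whereas you re-derive it with a second application of Lemma~\ref{intersect}, which is valid but slightly heavier than necessary.
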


\begin{proof}
\begin{par}
This is the same as the proof of Case 1 in Proposition 3.5 of \cite{ORT2016linkages}. Let $F$ be a set of edges incident with $s$ of size $\lceil \deg(s)/2 \rceil$ such that no two edges in $F$ form a feasible pair, i.e.~$F$ is independent in $L(G,s,k)$. By Proposition \ref{dangerous} there is a dangerous set $A_1$ containing the non-$s$ ends of the edges in $F$. By definition of a dangerous set, $|\delta_G(A_1)|\leq k+1$.
\end{par}
\begin{par}
Note that the disjoint unions of $\delta(A_1:\overline{A_1\cup \{s\}})$ with $\delta(\{s\}:A_1)$ and with $\delta(\{s\}:\overline{A_1\cup \{s\}})$ respectively give $\delta(A_1)$ and $\delta(\overline{A_1\cup \{s\}})$. By Lemma \ref{max}, $F$ has the maximum possible size of an independent set in $L(G,s,k)$, therefore, $|\delta(\{s\}:A_1)|=|F|=\frac{deg(s)+1}{2}$, $|\delta(\overline{A_1\cup \{s\}})|=|\delta(A_1)|-1\leq k$. Thus $\overline{A_1\cup \{s\}}$ is dangerous, and by Proposition \ref{odd}, $\delta(\{s\})\setminus F$ is an independent set in $L(G,s,k)$. Now we will show that 
\begin{enumerate}
\item[(i)] either every edge in $F$ lifts with every edge in $\delta(\{s\})\setminus F$ (this gives the complete bipartite possibility) or 
\item[(ii)] there is a unique edge in $F$ that does not lift with any edge in $\delta(\{s\})$, but any other edge of $F$ lifts with each edge of $\delta(\{s\})\setminus F$. This gives the isolated vertex plus complete bipartite case.
\end{enumerate}
\end{par}
\begin{par}
In particular we will show that if an edge $e_1\in F$ does not lift with an edge $e_2\in \delta(\{s\})\setminus F$, then $e_1$ does not lift with any other edge, and then by Frank's Theorem \ref{Frank} it is the only such edge. We present this in the following claim.
\end{par}
\begin{claim} \label{claim odd}
Let $e_1\in F$ and $e_2\in \delta(\{s\})\setminus F$ be a pair of edges that is not $k$-liftable. Then $e_1$ is not $k$-liftable with any edge in $\delta(\{s\})\setminus \{e_1\}$. 
\end{claim}
\begin{proof}
\begin{par}
Suppose $e_1\in F$ and $e_2 \in \delta(\{s\})\setminus F$ do not form a $k$-liftable pair and let $A_2$ be a dangerous set that witnesses this (such a set exists by Lemma \ref{dangerous set for degree three}), so the non-$s$ ends of $e_1$ and $e_2$ are in $A_2$. By Lemma \ref{max} the maximum size of an independent set in $L(G,s,k)$ is $\lceil \deg(s)/2 \rceil$; consequently, at most $\lceil \deg(s)/2 \rceil$ of the edges incident with $s$ have their non-$s$ ends in $A_2$. Therefore, at least $\lfloor{\deg(s)/2}\rfloor$ of the edges incident with $s$ have their non-$s$ ends in $\overline{A_2\cup \{s\}}$. Now since $e_2$ is in $\delta(\{s\})\setminus F$ but has its non-$s$ end in $A_2$ and $|\delta(\{s\})\setminus F|=\lfloor{\deg(s)/2}\rfloor$, the set of edges incident with $s$ whose non-$s$ ends are in $\overline{A_2\cup \{s\}}$ contains an edge $e$ from $F$. Also since $e_1\in F$ has its non-$s$ end in $A_2$, $e$ is in $F\setminus \{e_1\}$.
\end{par}
\begin{par}
  For $i=1,2$, set $F_i=\delta(A_i)\cap \delta(s)$; in particular $F_1=F$. Three of the hypotheses of Lemma \ref{intersect} are satisfied: $e_1\in F_1\cap F_2$ ($\alpha>0$), $e_2\in F_2\setminus F_1$ ($|F_2|>\alpha$), and $e\in F_1\setminus F_2$ ($|F_1|>\alpha$). If the other hypothesis of the lemma $\overline{A_1\cup A_2 \cup \{s\}}\neq \emptyset$ is also satisfied, then $|F_1|+|F_2|\leq \lfloor \deg(s)/2 \rfloor +2$, i.e.
$\lceil \deg(s)/2 \rceil+ |F_2|\leq \lfloor{\deg(s)/2}\rfloor + 2 $. Since $\deg(s)$ is odd, this means that $|F_2|\leq 1$, a contradiction since $F_2$ contains both $e_1$ and $e_2$. Thus, $\overline{A_1\cup A_2 \cup \{s\}}= \emptyset$, so all the edges of $\delta(\{s\})\setminus F$ have their non-$s$ ends in $A_2\setminus A_1$. 
\end{par}
\begin{par}
Now since, $|\delta(\{s\})\setminus F|= \lfloor{\deg(s)/2}\rfloor$ and there is an edge from $s$ to $A_1\cap A_2$, we have $|\delta(A_2)\cap \delta(\{s\})|\geq \lceil \deg(s)/2 \rceil$. By Lemma \ref{max} this is the maximum possible size of an independent set in $L(G,s,k)$, and since $A_2$ is dangerous, $|\delta(A_2)\cap \delta(\{s\})|= \lceil \deg(s)/2 \rceil$. 
\end{par}
\begin{par}
Consequently, $|\delta(\{s\} : (A_1 \cap A_2))|=1$, and all the non-$s$ ends of the edges of $F$ other than $e_1$ are in $A_1\setminus A_2$. Since all the non-$s$ ends of the edges incident with $s$ other than $e_1$ are either in $A_1\setminus A_2$ or $A_2\setminus A_1$, and $e_1$ has its non-$s$ end in $A_1\cap A_2$, $e_1$ does not lift with any other edge.
\end{par}\end{proof}

\begin{par}
By Frank's Theorem \ref{Frank}, there can only be one edge $e$ in $\delta(\{s\})$ that is not liftable with any other edge in $\delta(\{s\})$. If such an edge $e$ exists, and  if $f$ is an edge in $F\setminus\{e\}$ and $f'$ is an edge in $\delta(\{s\})\setminus F$, then $f$ and $f'$ is a $k$-liftable pair, otherwise, by Claim \ref{claim odd}, $f$ is not liftable with any edge in $\delta(\{s\})\setminus \{f\}$, contradicting the uniqueness of $e$. Thus every edge in $F\setminus \{e\}$ lifts with every edge in $\delta(\{s\})\setminus F$.
\end{par}
\begin{par}
 This gives the structure of an isolated vertex ($e$) plus a balanced complete bipartite graph for the lifting graph. 
 \end{par}\end{proof}

\begin{par}
This proposition dealt with the case when $\deg(s)$ is odd and $L(G,s,k)$ contains an independent set of the maximum possible size $\lceil \deg(s)/2 \rceil$, Figure \ref{Bipartite_and_isolated_vertex}. There it is proved that the maximal independent sets are either disjoint or intersect in exactly one vertex. Point $(4)$ in Theorem \ref{ORT} and \cite[Theorem~3.2]{Jordan1999ConstrainedSplitting} showed that if both $\deg(s)$ and $k$ are even, then the maximal independent sets of $L(G,s,k)$ are disjoint (complete multipartite). The case of an isolated vertex plus complete bipartite means two maximal independent sets of $L(G,s,k)$ intersecting in exactly one vertex. Now we consider the other cases.
\end{par}

\begin{par}
The new idea here is that we focus on the maximal independent sets of the lifting graph. In the following lemmas and theorem we will only see how they intersect. After that, in the coming sections, we will use this to find out what kind of structure they are arranged into, a path, a cycle, pairwise disjoint, or otherwise.
\end{par}

\begin{figure}[!h]
\centering
   \includegraphics[scale=0.6]{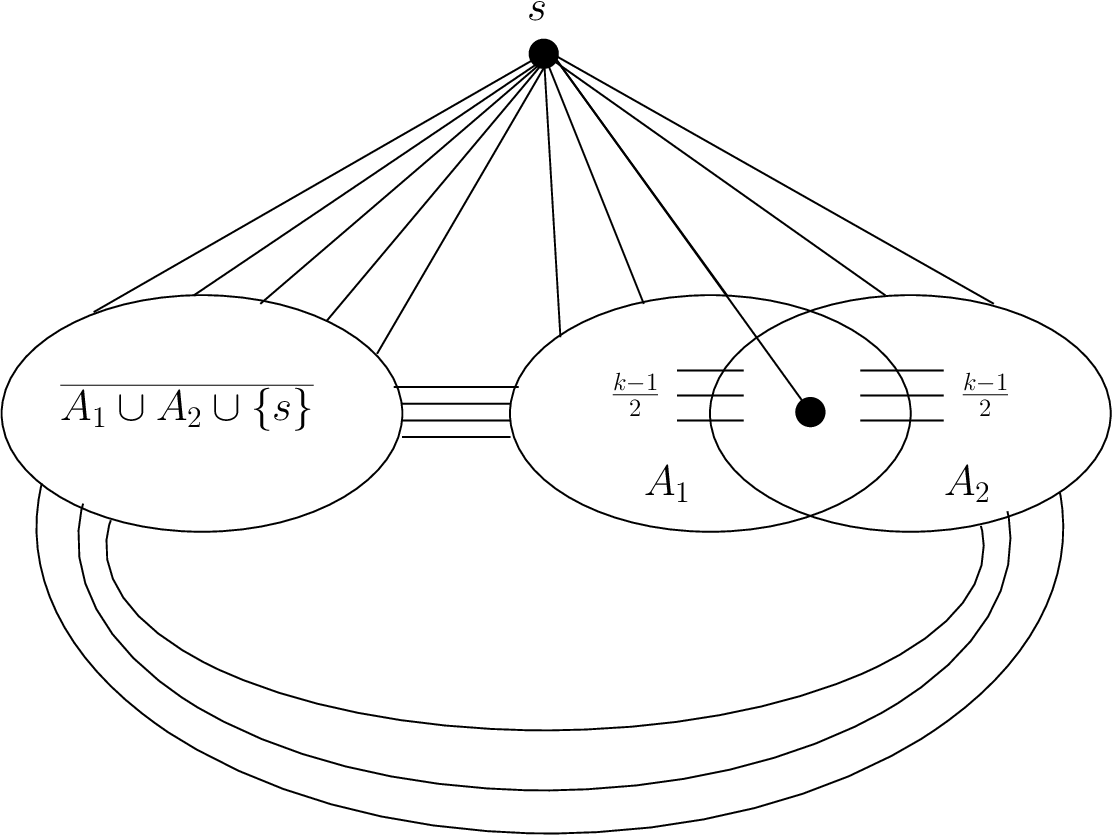} 
   \caption{Two intersecting dangerous sets corresponding to maximal independent sets not both of size $\lceil \deg(s)/2 \rceil$.}
   \label{two_intersecting_dangerous_sets_of_small_size}
\end{figure}

The following lemma is the first here on maximal independent sets not necessarily of size $\lceil{\deg(s)/2}\rceil$. Note that the implication below that $k$ is odd if the two sets intersect and their union is not the entire vertex set (i.e. they are crossing sets) also follows from point (c) in \cite[Lemma~2.2]{Jordan1999ConstrainedSplitting} even though there it was proved, as the other results in that paper, under the standing assumption at the beginning of each section that $\deg(s)$ is even. However, the proof there of that point in particular did not make use of that assumption, and it did not compare maximal independent sets to dangerous sets, but was a result about maximal dangerous sets.

\begin{lemma}\label{two dangerous}
Let $G$ be an $(s,k)$-edge-connected graph such that $k\geq 2$ and $\deg(s)\geq 4$. Suppose that $I_1$ and $I_2$ are two maximal independent sets in $L(G,s,k)$ of size at least $2$ each, and let $A_1$ and $A_2$ be two dangerous sets in $G$ corresponding to $I_1$ and $I_2$ respectively.

\begin{enumerate}

\item[(i)] Then $|I_1\cap I_2|\leq 1$.

\item [(ii)] If $|I_1\cap I_2|= 1$ and $I_1\cup I_2 \neq V(L(G,s,k))$, then $k$ is odd, and

\begin{enumerate}
\item [(a)] $|\delta_G(A_1)|= |\delta_G(A_2)|= k+1$;
\item[(b)] $|\delta_{G}(A_2\setminus A_1: A_1\setminus A_2)|=0$;
\item [(c)] $|\delta_{G-s}(A_1\cap A_2: \overline{A_1\cup A_2})|=0$; and
\item[(d)]$|\delta_{G}(A_2\setminus A_1: A_1\cap A_2)|= |\delta_{G}(A_1\cap A_2: A_1\setminus A_2)| = \frac{k-1}{2}$.
\end{enumerate}

\end{enumerate}

\end{lemma}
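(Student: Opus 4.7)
The whole argument will pivot on one observation about the cut $\delta(A_1 \cup A_2)$. First I would verify that $I_i = \delta(\{s\}) \cap \delta(A_i)$: any two edges $sv, sv'$ with $v, v' \in A_i$ fail to be liftable, since after lifting the cut $\delta(A_i)$ in $G_{v,v'}$ has size at most $k-1$ and still separates non-$s$ vertices by dangerousness of $A_i$; thus $\delta(\{s\}) \cap \delta(A_i)$ is independent, contains $I_i$, and coincides with it by maximality. Set $P_1 = A_1 \setminus A_2$, $P_2 = A_2 \setminus A_1$, $Q = A_1 \cap A_2$, $R = V(G) \setminus (A_1 \cup A_2 \cup \{s\})$, $r_i = |I_i|$; distinctness of $I_1, I_2$ forces $P_1, P_2 \neq \emptyset$. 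The key observation is that, provided $R \neq \emptyset$, the set $A_1 \cup A_2$ cannot itself be dangerous: otherwise $\delta(\{s\}) \cap \delta(A_1 \cup A_2) = I_1 \cup I_2$ would be an independent set strictly enlarging $I_1$, violating maximality. Hence $|\delta(A_1 \cup A_2)| \geq k+2$ in that case, while $|\delta(Q)|, |\delta(P_1)|, |\delta(P_2)| \geq k$ always, by $(s,k)$-edge-connectivity.

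For part (1), I would assume $\alpha := |I_1 \cap I_2| \geq 2$ and split on whether $R$ is empty. If $R = \emptyset$ then $\delta(\{s\}) = I_1 \cup I_2$, so every edge of $I_1 \cap I_2$ is blocked from being liftable both by $I_1$ and by $I_2$ and thus becomes an isolated vertex of $L(G,s,k)$; having at least two such isolated vertices contradicts Frank's Theorem~\ref{Frank} (which produces a matching of size $\lfloor \deg(s)/2 \rfloor$, permitting at most one isolated vertex). If $R \neq \emptyset$, I apply Equation~\ref{intersection of two cuts} to $A_1, A_2$: observing that $|\delta(Q : \overline{A_1 \cup A_2})| = \alpha + |\delta(Q:R)|$ and substituting $|\delta(A_i)| \leq k+1$ on the left and the four lower bounds above on the right, the equation rearranges to
\[\alpha + |\delta(Q:R)| + |\delta(P_1:P_2)| \leq 1,\]
contradicting $\alpha \geq 2$.

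For part (2), the hypothesis $I_1 \cup I_2 \neq V(L(G,s,k))$ yields some edge $sw \in \delta(\{s\}) \setminus (I_1 \cup I_2)$, forcing $w \in R$ and so $R \neq \emptyset$. Running the same estimate with $\alpha = 1$ gives $|\delta(Q:R)| + |\delta(P_1:P_2)| \leq 0$, so both vanish, giving (b) and (c); moreover every inequality becomes tight, so $|\delta(A_1)| = |\delta(A_2)| = k+1$ (yielding (a)), $|\delta(A_1 \cup A_2)| = k+2$, and $|\delta(Q)| = |\delta(P_1)| = |\delta(P_2)| = k$. For (d) I would expand
\[|\delta(A_1)| = r_1 + |\delta(P_1:R)| + |\delta(Q:P_2)| = k+1,\quad |\delta(P_1)| = (r_1-1) + |\delta(P_1:Q)| + |\delta(P_1:R)| = k\]
(after dropping the $|\delta(P_1:P_2)|$ and $|\delta(Q:R)|$ terms via (b) and (c)), subtract to obtain $|\delta(P_1:Q)| = |\delta(Q:P_2)|$, and argue symmetrically for $P_2$. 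The four quantities then share a common value $m$, and $|\delta(Q)| = 1 + 2m = k$ forces $m = (k-1)/2$, requiring $k$ to be odd. The main obstacle throughout is the paragraph-one non-dangerousness observation: without the extra $+2$ it contributes to the right side of Equation~\ref{intersection of two cuts}, the inequality only weakens to $\alpha + |\delta(Q:R)| + |\delta(P_1:P_2)| \leq 2$, which settles neither $\alpha = 2$ in part (1) nor the subcase $|\delta(Q:R)| + |\delta(P_1:P_2)| = 1$ in part (2).
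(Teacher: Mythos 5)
Your proposal is correct and follows essentially the same route as the paper: it verifies that $I_i = \delta(\{s\})\cap\delta(A_i)$, uses maximality to conclude that $A_1\cup A_2$ cannot be dangerous when $\overline{A_1\cup A_2\cup\{s\}}\ne\emptyset$, and then applies Equation~\eqref{intersection of two cuts} with the bound $|\delta(A_1\cup A_2)|\ge k+2$ to force $\alpha+|\delta(Q:R)|+|\delta(P_1:P_2)|\le 1$, after which (a)--(d) fall out of tightness exactly as in the paper. The only cosmetic differences are that you run the computation in $G$ rather than in $G-s$, and that in part (1) you handle the $R=\emptyset$ case by counting isolated vertices of $L(G,s,k)$ against Frank's matching, where the paper invokes the $\lceil\deg(s)/2\rceil$ size bound (also a consequence of Frank's Theorem); both are equivalent one-line appeals to Theorem~\ref{Frank}.
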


\begin{proof}
\begin{par}
We follow a generalized version of the proof of Case 2 in Proposition 3.5 of \cite{ORT}. Let $k_1= |\delta(\{s\}: A_1\setminus A_2)|$, $k_2=|\delta(\{s\}: A_2\setminus A_1)|$, $k_3=|\delta(\{s\}: A_1\cap A_2)|$, and assume that $k_3\neq 0$.
\end{par}
\begin{par}
Since $A_1$ and $A_2$ are dangerous, we have $|\delta_G(A_1)|\leq k+1$ and $|\delta_G(A_2)|\leq k+1$. In particular,
$|\delta_{G-s}(A_1)|\leq (k+1)-(k_1+k_3)$, and
$|\delta_{G-s}(A_2)|\leq (k+1)-(k_2+k_3)$.
\end{par}
\begin{par}
 Since $I_1$ and $I_2$ are maximal independent sets in $L(G,s,k)$, $I_1\cup I_2$ is not independent, consequently, by Proposition \ref{dangerous}, $A_1\cup A_2$ is not dangerous.
 \end{par}
 \begin{par}
     Also because $I_i$ is a maximal independent set of $L(G,s,k)$ for $i\in \{1,2\}$, each $A_i$ does not contain neighbours of $s$ other than those that are end-vertices of edges in $I_i$.
 \end{par}
 \begin{par}
  This and the assumption that $I_1\cup I_2 \neq V(L(G,s,k))$ imply that at least one edge incident with $s$ has its non-$s$ end in the set $\overline{A_1\cup A_2\cup \{s\}}$. Thus, $A_1\cup A_2$ is separating in $G-s$ in the sense that $V(G-s)\setminus (A_1\cup A_2)$ and $A_1\cup A_2$ are both non-empty. By the definition of a dangerous set, the only way for $A_1\cup A_2$ to not be dangerous is if $|\delta_G(A_1\cup A_2)|\geq k+2$. 
  This means that $|\delta_{G-s}(A_1\cup A_2)|\geq (k+2)-(k_1+k_2+k_3)$. Note that in $G-s$, $s$ is not in $\overline{A_1\cup A_2}$.
\end{par}
\begin{par}
Also since $G$ is $(s,k)$-edge-connected, $|\delta_{G-s}(A_1\cap A_2)|\geq k-k_3$, $|\delta_{G-s}(A_1 \setminus A_2)|\geq k-k_1$, and $|\delta_{G-s}(A_2 \setminus A_1)|\geq k-k_2$. Observe that in $G-s$:
\end{par}

\begin{align*}
\begin{split}
& 2\bigg[\big|\delta(A_1)\big| +\big|\delta(A_2)\big|- \Big(\big|\delta (A_1 \cap A_2: \overline{A_1 \cup A_2})|+\big|\delta (A_2\setminus A_1: A_1\setminus A_2)\big|\Big)\bigg]
\\
&\leq 2\bigg [(k+1)-(k_1+k_3)+ (k+1)-(k_2+k_3) \bigg]
\\
& =4k-2(k_1+k_2+k_3)+2+(2-2k_3)
\end{split}
\end{align*}

and  
\begin{align*}
\begin{split}
&\big|\delta (A_1\cap A_2)\big|+\big|\delta (A_2\setminus A_1)\big|+ \big |\delta (A_1\setminus A_2)\big|+ \big|\delta (\overline{A_1\cup A_2})\big| \geq \\ & (k-k_3)+(k-k_2)+(k-k_1)+(k+2)-(k_1+k_2+k_3)\\
& =4k-2(k_1+k_2+k_3)+2.
\end{split}
\end{align*}
\begin{par}
By Equation \ref{intersection of two cuts}, it follows that $2-2k_3\geq 0$, i.e. $k_3\leq 1$ as desired.
\end{par}

 If $k_3=1$, then inequalities throughout have to be equalities. More precisely,
\begin{enumerate}
\item[(1)] $|\delta_{G-s}(A_1)|=|\delta_{G-s}(A_1 \setminus   A_2)|= k-k_1$, so $|\delta_{G}(A_1\setminus A_2)|=k$ and $|\delta_G(A_1)|=k+1$;
\item[(2)] $|\delta_{G-s}(A_2)|= |\delta_{G-s}(A_2 \setminus A_1)|=k-k_2$, so $|\delta_{G}(A_2\setminus A_1)|=k$ and $|\delta_G(A_2)|=k+1$;
\item[(3)] $|\delta_{G-s}(A_1\cap A_2)|= k-1$;
\item[(4)] $|\delta_{G-s}(\overline{A_1\cup A_2})|=(k+2)-(k_1+k_2+k_3)=k+1-k_1-k_2$; 
\item[(5)] $|\delta_{G-s}(A_1\cap A_2: \overline{A_1\cup A_2})|=|\delta_{G-s}(A_2\setminus A_1: A_1\setminus A_2)|=0$.
\end{enumerate}
\begin{par}
  \noindent Together, $(1)$ and $(2)$ give $(a)$, and $(5)$ gives $(b)$ and $(c)$. To prove $(d)$ note that $(5)$ also gives (cf. Figure \ref{two cuts}),
\newline
$|\delta_{G-s}(A_2)|=|\delta_{G-s}(A_2\setminus A_1: \overline{A_1\cup A_2})|+|\delta_{G-s}(A_1\cap A_2: A_1\setminus A_2)|$, and \newline
$|\delta_{G-s}(A_2\setminus A_1)|=|\delta_{G-s}(A_2\setminus A_1: \overline{A_1\cup A_2})| +|\delta_{G-s}(A_2\setminus A_1: A_1\cap A_2)|$. From $(2)$ we have $|\delta_{G-s}(A_2)|= |\delta_{G-s}(A_2 \setminus A_1)|$. Cancelling the common $|\delta_{G-s}(A_2\setminus A_1: \overline{A_1\cup A_2})|$ on both sides yields $$|\delta_{G-s}(A_2\setminus A_1: A_1\cap A_2)|=|\delta_{G-s}(A_1\cap A_2: A_1\setminus A_2)|.$$

Now this last equality, $(3)$, and $(5)$ imply that, \newline
$k-1=|\delta_{G-s}(A_1\cap A_2)|= |\delta_{G-s}(A_1\cap A_2, A_1\setminus A_2)|+ |\delta_{G-s}(A_1\cap A_2, A_2\setminus A_1)|$
\newline
=$2|\delta_{G-s}(A_1\cap A_2, A_2\setminus A_1)|$. Thus $k$ has to be odd and, 
$$
|\delta_{G-s}(A_1\cap A_2, A_1\setminus A_2)| = |\delta_{G-s}(A_2\setminus A_1, A_1\cap A_2)|= \frac{k-1}{2}. 
$$
\end{par}

\begin{par}
Since $\deg(s)>3$, the maximum size of an independent set is $\lceil \deg(s)/2 \rceil$ by Frank's theorem \ref{Frank}. Therefore if the union of two intersecting maximal independent sets is the entire vertex set of $L(G,s,k)$, then they are both of size $\lceil \deg(s)/2 \rceil$ and they intersect in exactly one vertex.\end{par}\end{proof}

\begin{figure}[!h]
     \centering
     \includegraphics[scale=0.7]{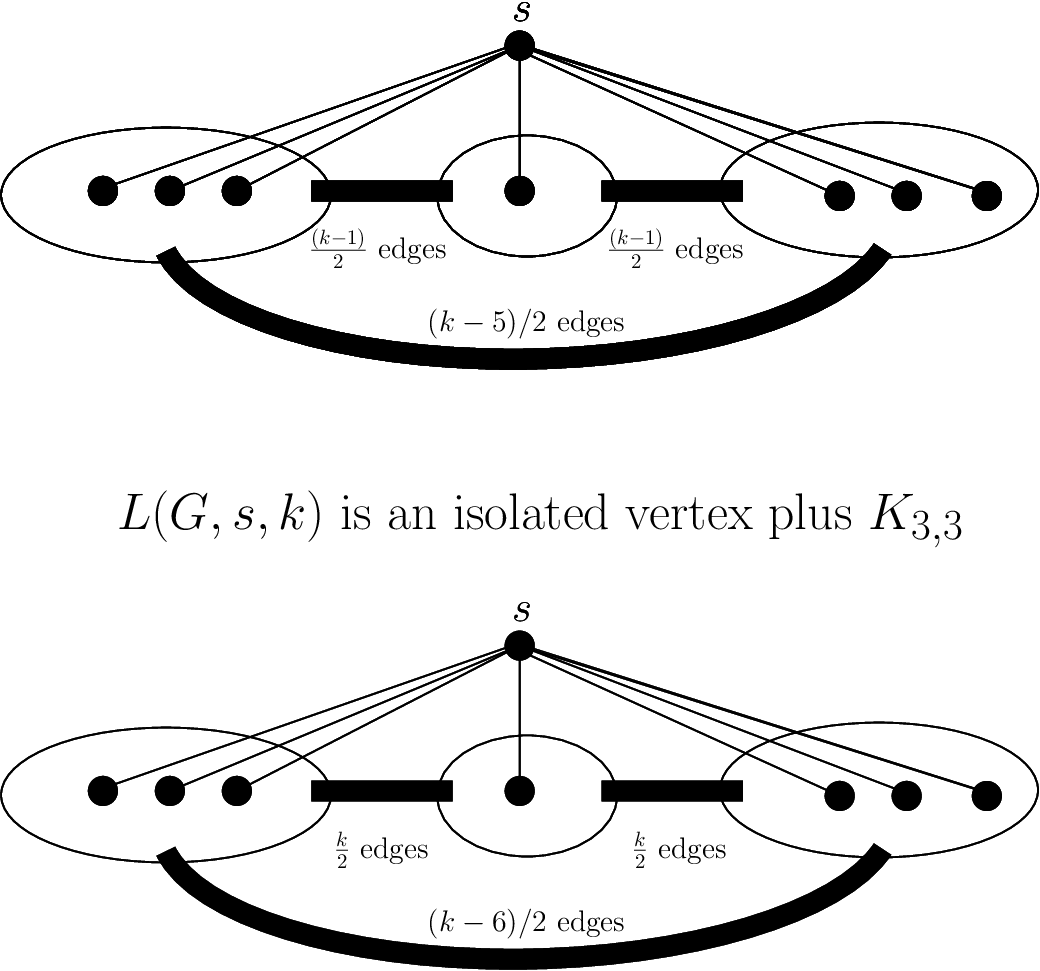}
     \caption{Examples where the lifting graph is an isolated vertex plus a complete balanced bipartite graph for even and odd $k$.}
     \label{examples_k_even_and_odd_vertex_plus_bipartite}
\end{figure}

\begin{par}
From this Lemma we have as a corollary that when two maximal independent sets in the lifting graph - not both of the large size $\lceil \deg(s)/2 \rceil$ - intersect, then $k$ has to be odd. But there is no restriction on the parity of $\deg(s)$ in that case. On the other hand, if two independent sets of size $\lceil \deg(s)/2 \rceil$ intersect, then by Lemma \ref{max} either $\deg(s)$ is odd or is equal to $4$. In case $\deg(s)$ is odd, then $L(G,s,k)$ is an isolated vertex plus a balanced complete bipartite graph as shown in Lemma \ref{odd}. The parity of $k$ in that case can be even as well as odd (examples in Figure \ref{examples_k_even_and_odd_vertex_plus_bipartite} and the right drawing of Figure \ref{Bipartite_and_isolated_vertex}). In case $\deg(s)=4$, such an intersection can only happen if $k$ is odd as shown in the comment after Proposition 3.4 in \cite{ORT2016linkages}, and it was also shown there that the graph $G$ has a specific structure in that case, to be presented later.
\end{par}
\begin{par}
  Another thing that was proved in the previous lemma is that the dangerous sets $A_1$ and $A_2$ in $G$ corresponding to the two intersecting maximal independent sets of $L(G,s,k)$ have a fixed number of edges, $(k-1)/2$, between $A_1\cap A_2$ and $A_1\setminus A_2$ as well as between $A_1\cap A_2$ and $A_2\setminus A_1$, a number depending only on the connectivity. By the example illustrated in the right drawing in Figure \ref{Bipartite_and_isolated_vertex} this does not have to be the case when the two intersecting maximal independent sets are both of size $\lceil \deg(s)/2 \rceil$. However, in the following lemma we will see that the numbers of edges between $A_1\cap A_2$ and each of the sets $A_1\setminus A_2$ and $A_2\setminus A_1$ still has to be equal, and its sum with the number of edges between $A_1\setminus A_2$ and $A_2\setminus A_1$ is a constant of the graph depending only on the connectivity and the degree of $s$. Recall that when $\deg(s)=3$ it is possible that the edges incident with $s$ form an independent set of size $3$ in $L(G,s,k)$. In this case, sets of size $\lceil \deg(s)/2 \rceil=2$ are not maximal independent. This is why we use `independent' instead of `maximal independent' in the following lemma.
\end{par}

\begin{lemma}\label{large independent}
 Let $G$ be an $(s,k)$-edge-connected graph such that $k\geq 2$. Suppose that $\deg(s)\geq 3$, $\deg(s)\neq 4$, and two independent sets of size $\lceil \deg(s)/2 \rceil$ in $L(G,s,k)$ have a non-empty intersection. Then $\deg(s)$ is odd, and if $A_1$ and $A_2$ are dangerous sets in $G$ corresponding to those two independent sets in $L(G,s,k)$, then 
\begin{enumerate}
\item[(1)]there are no vertices outside $A_1\cup A_2\cup\{s\}$;
\item[(2)]$|\delta_{G}(A_1\setminus A_2: A_2\setminus A_1)| \leq (k-\deg(s)+2)/2$;
\item[(3)]$|\delta_{G}(A_2\setminus A_1: A_1\cap A_2)|= |\delta_{G}(A_1\cap A_2: A_1\setminus A_2)| \geq \frac{k-1}{2}$;
\item[(4)] $|\delta_G(A_1)|=|\delta_G(A_2)|=k+1$;

\item[(5)] $|\delta_{G}(A_1\setminus A_2: A_2\setminus A_1)|+|\delta_{G}(A_1\cap A_2: A_1\setminus A_2)|=(k+1)-\frac{\deg(s)+1}{2}$; 
and
\item[(6)] $\deg(s)\leq k+2$.

\end{enumerate}
\end{lemma}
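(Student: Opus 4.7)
The approach combines three ingredients already developed in the paper: Lemma~\ref{max}(ii) to force $\deg(s)$ odd, Lemma~\ref{intersect} to eliminate vertices outside $A_1\cup A_2\cup\{s\}$ (yielding (1)), and then the dangerous-set upper bound $|\delta(A_i)|\leq k+1$ together with the Menger-style lower bound $|\delta(A)|\geq k$ from $(s,k)$-edge-connectivity, to pin down the remaining edge counts and prove (2)--(6). All the accounting takes place inside the four blocks $A_1\setminus A_2$, $A_2\setminus A_1$, $A_1\cap A_2$, $\{s\}$ once (1) is in hand.

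First I would argue $\deg(s)$ is odd: were $\deg(s)$ even and at least $6$, Lemma~\ref{max}(ii) would force two distinct independent sets of size $\deg(s)/2$ to be disjoint, and the hypothesis $\deg(s)\neq 4$ then leaves no even possibility. Writing $F_i\subseteq \delta(\{s\})$ for the two independent sets, Proposition~\ref{dangerous} combined with Lemma~\ref{max}(i) implies that $F_i$ equals the full set of edges from $s$ into $A_i$, since that larger set is itself independent in $L(G,s,k)$ of size at most $\lceil\deg(s)/2\rceil = |F_i|$. I then name the four counts $k_1, k_2, k_3, k_4$ of edges from $s$ into $A_1\setminus A_2$, $A_2\setminus A_1$, $A_1\cap A_2$, and $\overline{A_1\cup A_2}$ respectively; the equality $|F_1|=|F_2|$ forces $k_1=k_2$, while $F_1\cap F_2\neq \emptyset$ forces $k_3\geq 1$.

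For (1), apply Lemma~\ref{intersect} with $r_i=|F_i|$ and with $k_3=|F_1\cap F_2|$ playing the role of the lemma's $\alpha$; since $F_1\neq F_2$ we have $k_3<r_i$. Non-emptiness of $\overline{A_1\cup A_2\cup\{s\}}$ would then yield $\deg(s)+1 = r_1+r_2 \leq \lfloor \deg(s)/2\rfloor + 2 = (\deg(s)+3)/2$, forcing $\deg(s)\leq 1$, a contradiction. Now introduce three further edge counts $a := |\delta(A_1\cap A_2:A_2\setminus A_1)|$, $b := |\delta(A_1\cap A_2:A_1\setminus A_2)|$, and $x := |\delta(A_1\setminus A_2:A_2\setminus A_1)|$, and express the five cuts $\delta(A_1), \delta(A_2), \delta(A_1\cap A_2), \delta(A_1\setminus A_2), \delta(A_2\setminus A_1)$ as linear combinations of $a,b,x,k_1,k_2,k_3$. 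The dangerous-set inequalities $|\delta(A_i)|\leq k+1$ upper bound both $a+x$ and $b+x$ by $M := (k+1)-(\deg(s)+1)/2$; the bound $|\delta(A)|\geq k$ from $(s,k)$-edge-connectivity, applied to $A_1\setminus A_2$ and $A_2\setminus A_1$ (each of which has non-empty complement in $V(G)\setminus\{s\}$), lower bounds them both by $k-k_1 = M-1+k_3$. Comparing forces $k_3 = 1$ and equality $a+x = b+x = M$, which directly yields (4) and (5).

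Finally, $a = b$ follows from these equalities, and the Menger bound $|\delta(A_1\cap A_2)| = a+b+k_3 = 2a+1 \geq k$ gives $a\geq (k-1)/2$, which is (3); substituting into $x = M - a$ yields (2), and $x\geq 0$ yields (6). The main technical obstacle is the simultaneous bookkeeping: identifying the correct linear expressions for the five cuts, matching each dangerous-set upper bound against the appropriate Menger lower bound, and verifying that each complementary piece is non-empty so that the Menger bound applies. Once that is set up, the derivations of (2)--(6) are essentially algebraic.
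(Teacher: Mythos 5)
Your proof is correct in its main line and takes a genuinely different (and arguably cleaner) route than the paper's. The paper establishes part~(1) by a direct contradiction argument with Equation~\ref{intersection of two cuts}, then reapplies the same equation (with the $\overline{A_1\cup A_2\cup\{s\}}=\emptyset$ information) to obtain part~(2), then proves~(4) by a separate contradiction and extracts~(5),~(3),~(6) afterward; it also imports the fact that $s$ has exactly one neighbour in $A_1\cap A_2$ from the discussion inside Lemma~\ref{two dangerous}. You instead get~(1) from the black-boxed Lemma~\ref{intersect}, after which the entire content of~(2)--(6) falls out of one unified piece of arithmetic: the upper bounds $a+x,\ b+x\le M$ from $|\delta(A_i)|\le k+1$ pinned against the lower bounds $a+x,\ b+x\ge M-1+k_3$ from $|\delta(A_2\setminus A_1)|,\ |\delta(A_1\setminus A_2)|\ge k$. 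This simultaneously forces $k_3=1$ (so you do not need to import that fact), yields $a+x=b+x=M$ hence~(4) and~(5) directly, gives $a=b$ hence~(3) via the $A_1\cap A_2$ cut, and the nonnegativity of $x$ gives~(2) and~(6). What the paper's approach buys is symmetry with the computations it reuses in Lemma~\ref{two dangerous}; what yours buys is a shorter, single bookkeeping pass and an internal derivation of $k_3=1$.

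There is one minor gap: the lemma explicitly allows $\deg(s)=3$, but both Proposition~\ref{dangerous} and Lemma~\ref{max}(i), which you invoke to establish $F_i=\delta(\{s\})\cap\delta(A_i)$, are stated only for $\deg(s)\geq 4$ (they rest on Frank's theorem, which excludes degree~$3$). For $\deg(s)=3$ the needed facts still hold, but for a different reason: the corresponding dangerous set exists by Lemma~\ref{dangerous set for degree three} (and Lemma~\ref{unique minimal dangerous}, using $|F_i|=2<3=|V(L(G,s,k))|$), and Lemma~\ref{no dangerous set with all three neighbours} guarantees $A_i$ misses one neighbour of $s$, giving $|\delta(\{s\})\cap\delta(A_i)|\le 2=|F_i|$ in place of Lemma~\ref{max}(i). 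The paper's proof handles $\deg(s)=3$ with exactly this substitution; once you do the same, the arithmetic part of your argument goes through unchanged, since $\lceil\deg(s)/2\rceil=(\deg(s)+1)/2$ and all the cut identities you use are purely combinatorial.
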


\begin{proof}

\begin{par}
Suppose that $L(G,s,k)$ contains two independent sets of the size $\lceil \deg(s)/2 \rceil$ with non-empty intersection, and that $deg(s) \neq 4$. By Lemma \ref{max} $\deg(s)$ is odd. By Proposition \ref{odd}, if $\deg(s)\neq 3$, then the two sets intersect in exactly one vertex and they are the only maximal independent sets of $L(G,s,k)$ (this is the isolated vertex plus complete bipartite case). In case $\deg(s)=3$ an independent set of size $2$ does not have to be maximal as it is possible that all three edges incident with $s$ form an independent set of size $3$. Figure \ref{deg(s)=3} in the next section provides examples where all three edges at $s$ form an independent set.
\end{par}
\begin{par}
  By Lemma \ref{dangerous}, in case $\deg(s)>4$, there are two dangerous sets $A_1$ and $A_2$ in $G$ corresponding to the two maximal independent sets. For $\deg(s)=3$ the existence of such dangerous sets is guaranteed by Lemma \ref{dangerous set for degree three}, and by Lemma \ref{no dangerous set with all three neighbours} each one of those dangerous sets contains exactly two of the neighbours of $s$ (the third neighbour is outside it but in the other dangerous set). In any case, the union $A_1\cup A_2$ is not dangerous, and hence  $|\delta_{G}(\overline{A_1\cup A_2})|\geq k+2$ if $\overline{A_1\cup A_2\cup \{s\}}\neq \emptyset$.
\end{par}
\begin{par}
  We again use, in $G-s$, the same standard equation we used before \ref{intersection of two cuts}. We first show that there are no vertices in $G-s$ outside the union of two dangerous sets corresponding to the two maximal independent sets of $L(G,s,k)$. 
 \end{par} 
  
  \begin{claim}
  $A_1\cup A_2= V(G)\setminus \{s\}$.
  \end{claim}
  \begin{proof}
  \begin{par}
  Suppose by way of contradiction that $\overline{A_1\cup A_2\cup \{s\}}\neq \emptyset$. Then the right hand side of equation \ref{intersection of two cuts} applied in $G-s$ is at least $$(k-1)+\Big(k-\Big(\frac{\deg(s)-1}{2}\Big)\Big)+\Big(k-\Big(\frac{\deg(s)-1}{2}\Big)\Big)+((k+2)-\deg(s)).$$
    Note that $s$ has exactly one neighbour in $A_1\cap A_2$, and this follows from the last paragraph in the statement of Lemma \ref{two dangerous}. This, and the $k$-edge-connectivity of $G$, give the first three terms. The last term follows from the fact that $A_1\cup A_2$ is not dangerous.
  \end{par}
  \begin{par}
    This is a lower bound of $4k-2\deg(s)+2$. On the other hand, the left side has the upper bound of $2[(k+1-(\frac{\deg(s)+1}{2}))+(k+1-(\frac{\deg(s)+1}{2}))]= 4k+4-2(\deg(s)+1)=4k-2\deg(s)+2$. Thus, both sides are equal to $4k-2\deg(s)+2$, and the individual upper and lower bounds on each term hold with equality. In particular $|\delta_{G-s}(\overline{A_1\cup A_2})| = ((k+2)-\deg(s))$.
  \end{par}
  \begin{par}
    The set $\overline{A_1\cup A_2}$ does not contain any neighbours of $s$, as $A_1$ and $A_2$ contain all the neighbours of $s$. Therefore, $|\delta_{G}(\overline{A_1\cup A_2})| = |\delta_{G-s}(\overline{A_1\cup A_2})| \newline  = ((k+2)-\deg(s)) <k$, a contradiction.
  \end{par}
  \end{proof}
  
  \begin{par}
    Now, knowing that $\overline{A_1\cup A_2\cup \{s\}}= \emptyset$, then the lower bound on the right side of equation \ref{intersection of two cuts} is $(k-1)+(k-(\frac{\deg(s)-1}{2}))+(k-(\frac{\deg(s)-1}{2}))=3k-\deg(s)$. 
  \end{par}
\begin{par}  
    The upper bound on the left side is $4k-2\deg(s)+2=(3k-\deg(s))+(k-\deg(s)+2)$. This means that, cf. equation \ref{intersection of two cuts}, \newline $(|\delta_{G-s}(A_1\cap A_2:\overline{A_1\cup A_2})|+|\delta_{G-s}(A_2\setminus A_1: A_1\setminus A_2)|) \leq (k-deg(s)+2)/2$.
    \end{par}
    \begin{par}
  We know that $|\delta_{G-s}(A_1\cap A_2:\overline{A_1\cup A_2})|=0$ as $\overline{A_1\cup A_2\cup \{s\}}=\emptyset$. Thus, $|\delta_{G-s}(A_2\setminus A_1: A_1\setminus A_2)| \leq (k-\deg(s)+2)/2$, and this same upper bound also holds in $G$.
\end{par}
\begin{par}
 Since $G$ is $k$-edge-connected and, for $i=1,2$, $A_i$ is dangerous,$|\delta(A_i)|$ is either $k$ or $k+1$. 
 \end{par}
 \begin{claim}
 For $i=1,2$, $|\delta(A_i)|=k+1$.
 \end{claim}
 \begin{proof}
 \begin{par}
 If, say, $|\delta_G(A_1)|=k$, then,\newline
  $|\delta_G(A_1\setminus A_2 : A_2\setminus A_1)|+|\delta_G(A_1\cap A_2: A_2\setminus A_1)| + |\delta(\{s\}:A_1)|=k$. It follows that $|\delta_G(A_2\setminus A_1)|= k-1$, as $s$ has exactly one neighbour in $A_1\cap A_2$ (Lemma \ref{two dangerous}) and $|\delta_G(\{s\}:A_2\setminus A_1)|=|\delta_G(\{s\}:A_1\setminus A_2)|$ (as $A_1$ and $A_2$ correspond to maximal independent sets of the same size), a contradiction. The same argument holds for $A_2$.
\end{par}
\end{proof}

\begin{par}
\noindent The equality $|\delta_G(A_1)|=|\delta_G(A_2)|=k+1$ means that $|\delta_{G-s}(A_1)|=|\delta_{G-s}(A_2)|=k+1-(\frac{\deg(s)+1}{2})$, i.e. \newline $|\delta_G(A_1\setminus A_2: A_2\setminus A_1)|+|\delta_G(A_1\cap A_2: A_2\setminus A_1)| = \newline |\delta_G(A_2\setminus A_1: A_1\setminus A_2)|+|\delta_G(A_1\cap A_2: A_1\setminus A_2)|= k+1-(\frac{\deg(s)+1}{2})$. 
\end{par}

\noindent In particular, $$|\delta_G(A_1\cap A_2, A_2\setminus A_1)|=|\delta_G(A_1\cap A_2, A_1\setminus A_2)|.$$

\begin{par}
  Now since $|\delta_G(A_1\cap A_2)|\geq k$ and $s$ has exactly one neighbour in $A_1\cap A_2$, both \newline $|\delta_G(A_1\cap A_2: A_2\setminus A_1)|$ and $|\delta_G(A_1\cap A_2: A_1\setminus A_2)|$ have to be at least $(k-1)/2$.
\end{par}

\begin{par}
  The lower bound $|\delta_{G}(A_2\setminus A_1: A_1\cap A_2)|= |\delta_{G}(A_1\cap A_2: A_1\setminus A_2)| \geq \frac{k-1}{2}$, and the fact that $A_1$ and $A_2$ are both dangerous and each contain $(\deg(s)+1)/2$ neighbours of $s$, imply that $(\deg(s)+1)/2\leq (k+3)/2$. Thus, $\deg(s)\leq k+2$.
\end{par} \end{proof}

\begin{remark}
\begin{par}
  It is possible that $k$ is even in the previous lemma. That is, it is possible to have the lifting graph of isolated vertex plus complete bipartite with even $k$ (only $\deg(s)$ has to be odd). In that case $|\delta_G(A_1\cap A_2: A_2\setminus A_1)|$ and $|\delta_G(A_1\cap A_2: A_1\setminus A_2)|$ will be at least $k/2$. See Figure \ref{examples_k_even_and_odd_vertex_plus_bipartite} and the right drawing of Figure \ref{Bipartite_and_isolated_vertex}.
\end{par}
\end{remark}

Now we summarize what we know so far on the lifting graph and maximal independent sets.

\begin{theorem}\label{summary of lifting graph facts}
Let $G$ be an $(s,k)$-edge-connected graph such that $k\geq 2$ and $\deg(s)\geq 4$.

\begin{enumerate}
\item[(1)]  Any two maximal independent sets of $L(G,s,k)$ intersect in at most one vertex.

\item[(2)] If the union of two intersecting maximal independent sets in $L(G,s,k)$ equals $V(L(G,s,k))$, then $\deg(s)$ is odd and both sets are of size $(\deg(s)+1)/2$.

\item[(3)] If $\deg(s) >4$ and there exists two intersecting maximal independent sets of $L(G,s,k)$ of size $\lceil \deg(s)/2\rceil$, then $L(G,s,k)$ consists of an isolated vertex and a balanced complete bipartite graph. In particular, 

\begin{enumerate}
\item [(i)] these are the only two maximal independent sets of $L(G,s,k)$; and
\item [(ii)] the isolated vertex is their intersection.
\end{enumerate}

\item[(4)] The maximal independent sets of $L(G,s,k)$ are pairwise disjoint ($L(G,s,k)$ is complete multipartite) if $k$ is even and one of the following holds:
\begin{enumerate}
\item [(i)] $\deg(s)$ is even, or

\item[(ii)] at most one independent set of $L(G,s,k)$ has size $\lceil \deg(s)/2 \rceil$.
\end{enumerate}

\end{enumerate}

Moreover, if $L(G,s,k)$ is complete multipartite, then it is not a star.
\end{theorem}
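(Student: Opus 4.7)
The plan is to dispatch each of the four items (and the non-star addendum) by invoking the machinery already assembled in this section, most notably Lemma \ref{two dangerous}, Lemma \ref{large independent}, Proposition \ref{odd}, Lemma \ref{max}, Theorem \ref{ORT}, and Frank's Theorem \ref{Frank}. This is essentially a consolidation argument, so the bulk of the analytic work is already encoded in those statements.

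For (1), let $I_1$ and $I_2$ be two distinct maximal independent sets. If either has size at most $1$ the claim is trivial; otherwise both have size at least $2$ and Lemma \ref{two dangerous}(1) applies directly to give $|I_1\cap I_2|\le 1$. For (2), suppose $I_1\cap I_2\ne\emptyset$ and $I_1\cup I_2=V(L(G,s,k))$; then $|I_1|+|I_2|=\deg(s)+|I_1\cap I_2|\ge\deg(s)+1$, while Lemma \ref{max}(i) provides the upper bound $|I_i|\le\lceil\deg(s)/2\rceil$. Combining these two inequalities forces $\deg(s)$ to be odd and $|I_1|=|I_2|=(\deg(s)+1)/2$.

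For (3), Lemma \ref{large independent} immediately yields that $\deg(s)$ is odd, so Proposition \ref{odd} applies and forces $L(G,s,k)$ into one of two shapes: a complete bipartite graph with parts of sizes $\lceil\deg(s)/2\rceil$ and $\lfloor\deg(s)/2\rfloor$, or an isolated vertex plus a balanced complete bipartite graph. The first option has exactly two maximal independent sets, and they are disjoint, which is incompatible with the hypothesized intersecting pair of size $\lceil\deg(s)/2\rceil$. Hence the second option must hold, and in that graph the two maximal independent sets are the two bipartite sides each adjoined with the isolated vertex, intersecting in precisely that vertex; this proves (i) and (ii).

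For (4), case (i) is exactly Theorem \ref{ORT}(4). In case (ii), assume towards a contradiction that two distinct maximal independent sets $I_1,I_2$ share a vertex. If $I_1\cup I_2=V(L(G,s,k))$, then by (2) both have size $\lceil\deg(s)/2\rceil$, violating the hypothesis that at most one does. Otherwise $I_1\cup I_2\subsetneq V(L(G,s,k))$; in this subcase $|I_1|,|I_2|\ge 2$ (a singleton maximal independent set consists of a vertex adjacent to every other vertex, so it cannot lie in any other independent set of size at least $2$), and Lemma \ref{two dangerous}(2) then forces $k$ to be odd, contradicting $k$ even. Finally, the non-star addendum follows from Frank's Theorem \ref{Frank}: it provides $\lfloor\deg(s)/2\rfloor\ge 2$ pairwise disjoint liftable pairs, i.e.\ a matching of size at least $2$ in $L(G,s,k)$, whereas any star has matching number $1$. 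I do not expect a real obstacle; the only delicate bookkeeping is handling the singleton-maximal-independent-set case in (1) and (4)(ii) explicitly, and reconciling the outputs of Lemma \ref{large independent} and Proposition \ref{odd} in (3) so that the isolated-vertex-plus-bipartite structure and its two maximal independent sets are identified cleanly.
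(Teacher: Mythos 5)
Your proof is correct and follows essentially the same route as the paper: parts (1) and (4)(ii) reduce to Lemma \ref{two dangerous}, part (2) to Frank's bound via Lemma \ref{max}, part (3) to Lemma \ref{large independent} combined with Proposition \ref{odd}, and part (4)(i) to Theorem \ref{ORT}(4). The only cosmetic differences are that you handle the singleton-maximal-independent-set cases explicitly (a detail the paper leaves implicit) and you rule out the star by exhibiting a matching of size $\lfloor\deg(s)/2\rfloor\ge 2$ from Frank's theorem rather than, as the paper does, observing that a star on $n\ge 4$ vertices has a maximal independent set of size $n-1>\lceil n/2\rceil$; both are immediate consequences of Theorem \ref{Frank}.
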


\begin{proof}

\begin{par}
From Lemma \ref{two dangerous}, we have $(1)$. Suppose that there exists two intersecting maximal independent sets of $L(G,s,k)$ whose union is $V(L(G,s,k))$. By Frank's theorem \ref{Frank}, every independent set in $L(G,s,k)$ is of size at most $\lceil \deg(s)/2 \rceil$. Thus the only way the union of two intersecting independent sets can equal $V(L(G,s,k))$ is if $\deg(s)$ is odd and they are both of size $(\deg(s)+1)/2$. This proves $(2)$.
\end{par}
\begin{par}
Now suppose that there exists two intersecting maximal independent sets of $L(G,s,k)$ of size $\lceil \deg(s)/2\rceil$. If $\deg(s)>4$, then by Lemma \ref{large independent} it follows that $\deg(s)$ is odd, and then by Proposition \ref{odd} we have that $L(G,s,k)$ consists of an isolated vertex and a balanced complete bipartite graph. This completes the proof of $(3)$.
\end{par}
\begin{par}
If $k$ is even and $\deg(s)$ is even, then the maximal independent sets of $L(G,s,k)$ are pairwise disjoint by $(4)$ in Theorem \ref{ORT}. If $\deg(s)$ is odd and at most one independent set of $L(G,s,k)$ has size $\lceil \deg(s)/2 \rceil$, then either $L(G,s,k)$ consists of two disjoint maximal independent sets of sizes $\lceil \deg(s)/2 \rceil$ and $\lfloor \deg(s)/2 \rfloor$, or for any two maximal independent sets $I_1$ and $I_2$, $I_1\cup I_2 \neq V(L(G,s,k))$, and by Lemma \ref{two dangerous} it follows that in case $k$ is even, $I_1\cap I_2 = \emptyset$. This proves $(4)$.
\end{par}

\begin{par}
  Since $\deg(s)\geq 4$, then by the theorem of Frank \ref{Frank}, $L(G,s,k)$ cannot be a star because a star on $n$ vertices contains a maximal independent set of size $n-1 > \lceil \frac{n}{2}\rceil$.
\end{par} \end{proof}

\begin{remark}
 Every complete multipartite graph $K_{k_1,\cdots, k_m}$ satisfying the following conditions is the $k$-lifting graph of some graph: $m\geq 2$ and $k_i \leq \min(k+1,\lceil \deg(s)/2 \rceil)$ for every $i\in \{1,\cdots, m\}$, and for every proper subset $S\subsetneq \{1,\cdots, m\}$, $\sum_{i\in S} k_i + \sum_{i \notin S} (k+1-k_i)$ is at least $k+2$ if $|S|\geq 2$, and at least $k$ if $S$ is a singleton or empty. If $m=2$, then there is no proper subset of $\{1,\cdots, m\}$ of size at least $2$. This means that in that case only the lower bound of $k$, and not $k+2$, is assumed, as in the graph illustrated in the left drawing of Figure \ref{Bipartite_and_isolated_vertex}, which we are going to generalize now to construct a graph $G$ such that $L(G,s,k)= K_{k_1,\cdots, k_m}$. Let $A, A_1,\cdots, A_m$ be cliques of size $k+1$ each. Let $s$ have $k_i$ edges to $A_i$, and let $A$ have $k+1-k_i$ edges to $A_i$. The resulting graph $G$ is such that $G$ is $(s,k)$-edge-connected and lifting any pair of edges with end-vertices in different $A_i$'s results in an $(s,k)$-edge-connected graph (any cut that does not have $s$ alone on one side is of size at least $k$). Each $A_i$ is a dangerous set (since $|\delta(A_i)|=k+1$), thus for each $i$, $\delta(\{s\}:A_i)$ is a maximal independent set in $L(G,s,k)$ of size $k_i$.
\end{remark}

\begin{par}
  To be able to talk more neatly about a connected collection of maximal independent sets in $L(G,s,k)$, we define the \textit{independence graph}.
\end{par}

\begin{definition} (Independence Graph)
For a graph $H$ the independence graph $I(H)$ is the graph whose vertex set is the set of maximal independent sets of $H$ and in which two vertices are adjacent if and only if the corresponding independent sets have a nonempty intersection.
\end{definition}
\begin{remark}
   The complement of $H$ is connected if and only if $I(H)$ is connected.
\end{remark}

\begin{par}
Rereading Theorem \ref{summary of lifting graph facts} we see that when $L(G,s,k)$ is an isolated vertex plus a balanced complete bipartite graph, $I(L(G,s,k))$ is a path of length one (one edge representing the intersection of two maximal independent sets of size $(\deg(s)+1)/2$), and when $L(G,s,k)$ is complete multipartite graph, $I(L(G,s,k))$ consists of singletons.
\end{par}

\begin{par}
  The maximal independent sets form, make, or are arranged in, a path or a cycle if the independence graph, which is the intersection graph of the maximal independent sets, is respectively a path or a cycle. However, we will also need to talk about similar arrangements for the corresponding dangerous sets. We therefore introduce the following general definition.
\end{par}

\begin{definition}
A collection of sets $S_1, \cdots, S_n$ \textit{makes a path or a cycle via intersections} if the graph with vertex set $\{S_1, \cdots, S_n\}$, in which two vertices are adjacent iff they have a nonempty intersection, is a path or a cycle respectively.
\end{definition}

\begin{par}
 In Section 3.2 of \cite{ORT2016linkages} the following was proved (see Figure \ref{degree_4_cycle}). 
\end{par}

\begin{figure}[!h]
     \centering
     \includegraphics[scale=0.7]{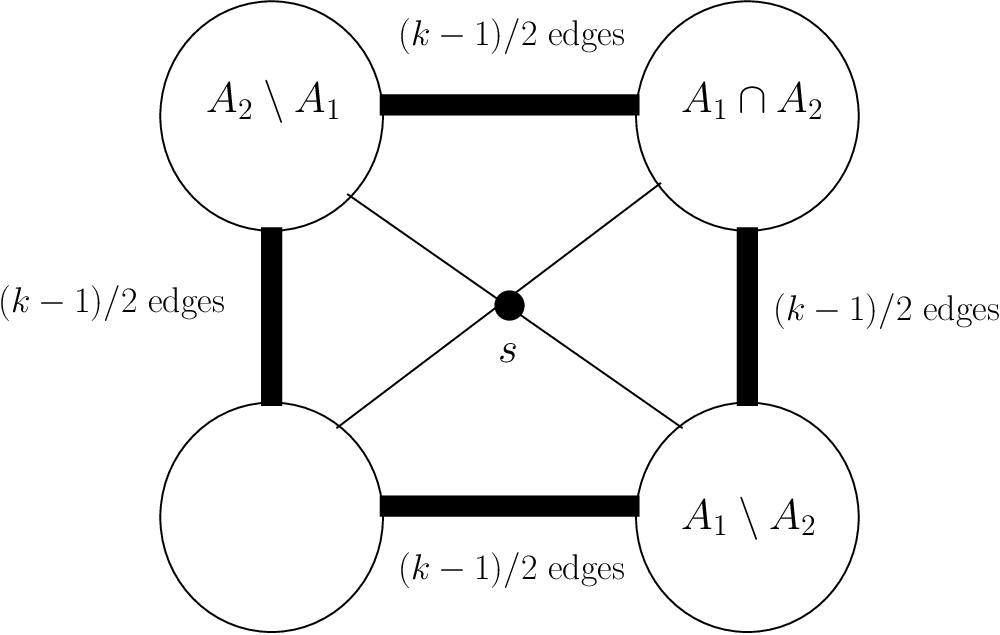}
     \caption{The graph $G$ has this structure if $deg(s)=4$ and two independent sets of size $2$ in $L(G,s,k)$ have a nonempty intersection.}
     \label{degree_4_cycle}
\end{figure}

\begin{lemma}\label{4 cycle}\cite{ORT2016linkages}
Let $G$ be an $(s,k)$-edge-connected graph. If $\deg(s)=4$, then $L(G,s,k)$ is one of: a perfect matching; $C_4$; and $K_4$. If $k$ is even, then $L(G,s,k)$ is not a perfect matching. \newline
Moreover, if $L(G,s,k)$ is a perfect matching, then $G$ has an $(s,(k-1)/2)$-cycle structure whose blobs are the intersections of the dangerous sets corresponding to the maximal independent sets of $L(G,s,k)$.
\end{lemma}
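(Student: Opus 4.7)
The plan is to narrow $L:=L(G,s,k)$ down to a short list of candidates and eliminate three of them using a uniform dangerous-set argument, before addressing the parity of $k$ and deducing the cycle structure of $G$.

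Since $\deg(s)=4$, Frank's Theorem \ref{Frank} yields two disjoint liftable pairs, so $L$ has no isolated vertex, while Lemma \ref{max}(i) forces every independent set in $L$ to have size at most $2$. Enumerating the $4$-vertex graphs satisfying these two constraints leaves exactly six candidates: $K_4$, $K_4-e$, the paw (a triangle with a pendant), $C_4$, $P_4$, and $2K_2$. To eliminate $K_4-e$, the paw, and $P_4$ uniformly, I would exhibit in each of them a non-edge $\{v_i,v_j\}$ of $L$ whose two remaining vertices $v_a,v_b$ satisfy (i) $v_av_b$ is an edge of $L$, and (ii) each of $v_a,v_b$ is adjacent in $L$ to at least one of $v_i,v_j$. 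A direct inspection confirms the existence of such a non-edge in each of the three graphs (the unique non-edge of $K_4-e$; the non-edge between the pendant and a triangle vertex other than the attachment vertex in the paw; the non-edge between the two endpoints in $P_4$), whereas no such non-edge exists in $C_4$ or $2K_2$. Let $A$ be a dangerous set corresponding to the non-liftable pair $\{sv_i,sv_j\}$. Condition (ii) combined with Proposition \ref{dangerous} forces $u_a,u_b\notin A$, since if (say) $u_a\in A$ then $\{sv_i,sv_j,sv_a\}$ would be independent in $L$, contradicting that $v_a$ is adjacent to $v_i$ or $v_j$. Hence $A':=\overline{A\cup\{s\}}$ contains both $u_a$ and $u_b$; since $|\delta(\{s\}:A)|=|\delta(\{s\}:A')|=2$, we have $|\delta(A')|=|\delta(A)|\leq k+1$, so $A'$ is also dangerous. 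Proposition \ref{dangerous} applied to $A'$ then makes $\{sv_a,sv_b\}$ non-liftable, contradicting (i). This leaves $L\in\{K_4,C_4,2K_2\}$.

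For the parity assertion, suppose $L=2K_2$ with matching edges $v_1v_2$ and $v_3v_4$. Its four maximal independent sets are the pairs $\{sv_i,sv_j\}$ with $i\in\{1,2\}$ and $j\in\{3,4\}$; any two intersecting ones meet in a single vertex and have union of size three, strictly smaller than $V(L)$. Lemma \ref{two dangerous}(2) then forces $k$ to be odd, so $L\neq 2K_2$ when $k$ is even.

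Finally, assume $L=2K_2$ with the above labelling, and let $A_{ij}$ denote the unique minimal dangerous set corresponding to $\{sv_i,sv_j\}$, guaranteed by Lemma \ref{unique minimal dangerous}. Define the candidate blobs
$$H_1:=A_{13}\cap A_{14},\qquad H_2:=A_{23}\cap A_{24},\qquad H_3:=A_{13}\cap A_{23},\qquad H_4:=A_{14}\cap A_{24},$$
so that $u_i\in H_i$ and the expected picture is the $4$-cycle $H_1,H_3,H_2,H_4,H_1$ with each $A_{ij}$ equal to the union of the two adjacent blobs $H_i$ and $H_j$. The main obstacle is to verify, using the precise edge equalities of Lemma \ref{two dangerous}(2)(a)--(d) applied to each intersecting pair of dangerous sets (which yield $|\delta(A_{ij})|=k+1$, $(k-1)/2$ edges across $A_\alpha\cap A_\beta$ to each of $A_\alpha\setminus A_\beta$ and $A_\beta\setminus A_\alpha$, and vanishing of the ``diagonal'' edge sets $\delta(A_\alpha\setminus A_\beta:A_\beta\setminus A_\alpha)$ and $\delta_{G-s}(A_\alpha\cap A_\beta:\overline{A_\alpha\cup A_\beta})$), that (a) the four blobs are pairwise disjoint, (b) they together cover $V(G)\setminus\{s\}$ (where minimality of each $A_{ij}$ is used to rule out stray vertices in the symmetric differences), and (c) the only edges of $G-s$ between distinct blobs are the $(k-1)/2$ edges joining each cyclically consecutive pair. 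Once (a)--(c) are established, the required $(s,(k-1)/2)$-cycle structure follows directly from the definition.
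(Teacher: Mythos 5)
The paper does not prove Lemma \ref{4 cycle} itself; it is quoted from Section~3.2 of \cite{ORT}, so there is no ``paper proof'' to match against. Evaluating your reconstruction on its own merits: the first three steps are correct and well executed. The enumeration (six candidate graphs after imposing ``no isolated vertex'' via Frank's Theorem and ``$\alpha(L)\le 2$'' via Lemma~\ref{max}(i)) is right, and your uniform elimination of $K_4-e$, the paw, and $P_4$ is a clean argument: the complementary-cut observation $|\delta(A)|=|\delta(\overline{A\cup\{s\}})|$ for a dangerous set meeting exactly two of the four $s$-edges, combined with Proposition~\ref{dangerous}, does what you need, and the choice of non-edge in each offending graph satisfies (i) and (ii) as claimed while $C_4$ and $2K_2$ genuinely resist it. The parity step is also correct: any two intersecting maximal independent sets of a $2K_2$ have union of size~$3<4$, so Lemma~\ref{two dangerous}(2) applies and forces $k$ odd.

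The cycle-structure part is where the proposal is incomplete. You correctly define the four blobs $H_1,\dots,H_4$ and correctly state what must be verified, but (a) and (b) do \emph{not} follow from Lemma~\ref{two dangerous}(2)(a)--(d) alone, which is all you cite. Pairwise disjointness of the blobs reduces to $A_{13}\cap A_{24}=\emptyset$ and $A_{14}\cap A_{23}=\emptyset$, i.e.\ to the fact that the \emph{minimal} dangerous sets associated with \emph{disjoint} maximal independent sets are disjoint. That is exactly Lemma~\ref{minimal dangerous} (via Lemma~\ref{disjoint dangerous sets}), a separate cut-counting argument which Lemma~\ref{two dangerous}(2) does not give you; your parenthetical ``minimality rules out stray vertices in the symmetric differences'' gestures at it but does not establish it. Similarly, coverage of $V(G)\setminus\{s\}$ needs two separate facts: that each $A_{ij}$ is contained in the union of its two adjacent blobs (this is the content of Lemma~\ref{dangerous set between two other dangerous sets}(4) specialized to a $3$-path in the $4$-cycle of dangerous sets), and that no vertex of $G-s$ lies outside $\bigcup A_{ij}$ (the content of Lemma~\ref{vertices outside dangerous sets} for a cycle component), neither of which is implied by (2)(a)--(d) on their own. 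Once (a) and (b) are in hand, (c) does follow from (2)(b) and (2)(d) as you say. None of the needed lemmas depends on Lemma~\ref{4 cycle}, so there is no circularity in filling the gap this way; but as written the last step is an outline, not a proof, and the cited tools are insufficient without the additional disjointness/coverage lemmas or equivalent direct cut-counting arguments.
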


\begin{par}
Note that in case $L(G,s,k)$ is a perfect matching, both the complement of $L(G,s,k)$ and $I(L(G,s,k))$ are a $4$-cycle, and the maximal independent sets of $L(G,s,k)$ are each of size $2$ and they, as well, form a $4$-cycle via intersections. 
\end{par}

\begin{par}
This cyclic structure, for $\deg(s)=4$, is the basis for the results we prove in the coming sections about the structure of $G$ when the complement of its lifting graph is connected. We will generalize this cyclic structure to arbitrary $\deg(s)$ and show that it is one of two possible structures (the other is a path structure) that happen when the maximal independent sets of the lifting graph form a connected entity, i.e. when $I(L(G,s,k))$ is connected. 
\end{par}

\section{Maximal independent sets of the lifting graph}

\begin{par}
We are now ready to study the arrangement of the maximal independent sets of $L(G,s,k)$ and their corresponding dangerous sets in $G$. This is the way we will learn the relation between the structure of a graph and the structure of its lifting graph. We show that there are only two possibilities for $I(L(G, s, k))$ to be connected: it is either a path or a cycle. The latter generalizes the situation for $\deg(s) = 4$. See Figure \ref{degree_4_cycle}. To show this we need first to prove that no three maximal independent sets of $L(G,s,k)$ have a nonempty common intersection. We will first prove the following lemma about three intersecting dangerous sets.
\end{par}

\begin{lemma}\label{three dangerous}
Let $G$ be an $(s,k)$-edge-connected graph such that $k\geq 3$ is odd, and let $A_1,A_2,A_3$ be three distinct dangerous sets such that, for any distinct $i,j\in \{1,2,3\}$:
\begin{enumerate}
\item[(a)] $A_i\cap A_j$ contains at most one neighbour of $s$;
\item[(b)] $|\delta(A_i\cap A_j:A_j\setminus A_i)|=(k-1)/2$;
\item[(c)] $|\delta(A_i\setminus A_j:A_j\setminus A_i)|=0$; and 
\item[(d)] $|\delta_{G-s}(A_i\cap A_j:\overline{A_i\cup A_j})|=0$.
\end{enumerate}
Then $A_1\cap A_2\cap A_3=\emptyset$.
\end{lemma}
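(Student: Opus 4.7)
My plan is to argue by contradiction: assume $P_0 := A_1 \cap A_2 \cap A_3 \ne \emptyset$ and derive an impossible inequality. I would partition $V(G)\setminus\{s\}$ into the eight natural regions cut out by $A_1, A_2, A_3$: the triple intersection $P_0$; three ``two-index'' pieces $P_\ell := (A_i \cap A_j)\setminus A_\ell$ (with $\{i,j,\ell\}=\{1,2,3\}$); three ``one-index'' pieces $Q_\ell := A_\ell \setminus (A_i \cup A_j)$; and the outside $R := V(G)\setminus(A_1 \cup A_2 \cup A_3 \cup \{s\})$. Conditions (c) and (d) kill most inter-region edges: (c) kills every $P_i$--$P_j$ and every $Q_i$--$Q_j$ with $i\ne j$ together with the $P_i$--$Q_j$ pairs that cross some cut $\delta(A_r\setminus A_s)$; (d) kills every $P_\ell$--$R$ edge (for $\ell \in \{0,1,2,3\}$) and the remaining ``opposite'' $P_i$--$Q_j$ pairs. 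What survives, besides intra-region edges and edges to $s$, are the twelve counts $x_\ell := \delta(P_0:P_\ell)$, the three $\delta(P_\ell:Q_\ell)$, the ``cyclic'' $\delta(P_1:Q_2),\,\delta(P_2:Q_3),\,\delta(P_3:Q_1)$, and the three $\delta(Q_\ell:R)$.

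Applying (b) in both directions for each pair $(i,j)$ yields six equations of the form $x_\ell + \delta(P_i:Q_j) = (k-1)/2$, which determine every surviving $P$--$Q$ count as $(k-1)/2 - x_r$ and force $0 \le x_\ell \le (k-1)/2$. With the shorthand $s_0 := \delta(P_0:\{s\})$, $a_\ell := \delta(P_\ell:\{s\})$, $b_\ell := \delta(Q_\ell:\{s\})$, $q_\ell := \delta(Q_\ell:R)$, $D := x_1+x_2+x_3$, $S_P := a_1+a_2+a_3$, $S_Q := b_1+b_2+b_3$, $E := q_1+q_2+q_3$, a region-by-region computation of $|\delta(A_\ell)|$ together with the danger bound $|\delta(A_\ell)|\le k+1$ produces three inequalities whose sum is
\[
E + S_Q + 2 S_P + 3 s_0 \;\le\; 6 + D. \qquad (\ast)
\]
On the other side, $(s,k)$-edge-connectivity gives $|\delta(P_0)| = s_0 + D \ge k$; applied to $A_i\cap A_j = P_0\cup P_\ell$ it gives $s_0 + a_\ell \ge 1$ for each $\ell$; and for every nonempty $P_\ell$ it gives $|\delta(P_\ell)| = (k-1)+x_\ell - x_i - x_j + a_\ell \ge k$, i.e.\ $x_\ell + a_\ell \ge 1 + x_i + x_j$.

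The argument then splits via hypothesis (a), which forces the dichotomy: either $s_0 \ge 1$ (and then $a_1=a_2=a_3=0$), or $s_0 = 0$ (and then each $a_\ell\ge 1$, so every $P_\ell$ is nonempty). In the case $s_0 = 0$, summing the three ``$|\delta(P_\ell)|\ge k$'' inequalities yields $S_P \ge D+3$; plugging into $(\ast)$ gives $6+2D \le 6+D$, i.e.\ $D\le 0$, contradicting $D\ge k\ge 3$. In the case $s_0\ge 1$, summing the same inequalities over the nonempty $P_\ell$'s (now with every $a_\ell=0$) forces some $P_\ell$ to be empty; without loss of generality $P_3=\emptyset$. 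Then (b) for the pair $(1,3)$ in both directions yields $x_1 = x_2 = (k-1)/2$ and simultaneously forces $P_1, P_2$ to be nonempty (otherwise the same equations would demand $0 = (k-1)/2$, impossible for $k \ge 3$); but then $|\delta(P_1)| = (k-1)+x_1 - x_2 - x_3 = k-1 < k$, contradicting $|\delta(P_1)|\ge k$. The main obstacle I anticipate is the clean bookkeeping needed to derive $(\ast)$ and the local cut formulas for the $P_\ell$; once those are in hand, the dichotomy closes in a few lines.
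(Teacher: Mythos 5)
Your proposal is correct, and it takes a genuinely different route from the paper. The paper proves the lemma via three nested claims showing that progressively more of the pieces $(A_i\cap A_j)\setminus A_\ell$ are empty (first at least one, then at least two, then all three), each established by a tailored ad hoc argument, with the final case giving the contradiction. Your approach instead sets up an explicit algebraic scaffold: the partition into $P_0,P_\ell,Q_\ell,R$, the six equations $x_\ell+\delta(P_i:Q_j)=(k-1)/2$ derived from (b), the local cut formulas $|\delta(P_0)|=s_0+D$, $|\delta(A_i\cap A_j)|=s_0+a_\ell+(k-1)$ and $|\delta(P_\ell)|=(k-1)+x_\ell-x_i-x_j+a_\ell$, and the summed danger bound $(\ast)$. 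Together with hypothesis (a), the edge-connectivity bound $|\delta(A_i\cap A_j)|\ge k$ pins down $s_0+a_\ell=1$ for every $\ell$, which immediately forces your clean two-case split on $s_0\in\{0,1\}$; each case closes in one or two lines of arithmetic. This is shorter and arguably more transparent than the paper's argument, and it makes visible exactly where each hypothesis is used. (You can, in fact, bypass $(\ast)$ entirely: in the case $s_0=0$ you already know $S_P=3$, so $S_P\ge D+3$ gives $D\le 0$ directly.)

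One small bookkeeping slip in the narrative: you list $\delta(P_\ell:Q_\ell)$ among the surviving inter-region counts. In fact $P_\ell\subseteq A_i\setminus A_\ell$ and $Q_\ell\subseteq A_\ell\setminus A_i$ (for any $i\neq\ell$), so $\delta(P_\ell:Q_\ell)\subseteq\delta(A_i\setminus A_\ell:A_\ell\setminus A_i)$ is already killed by (c). The twelve surviving counts are the three $x_\ell$, all six off-diagonal $\delta(P_i:Q_j)$ with $i\ne j$ (which come in three equal pairs, each equal to $(k-1)/2-x_r$ for the third index $r$), and the three $\delta(Q_\ell:R)$. Your subsequent computations of $|\delta(A_\ell)|$, $(\ast)$, and the local cut sizes are consistent with the correct list, so the slip does not propagate, but it should be fixed in the write-up.
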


\begin{proof}
By way of contradiction, assume that $A_1\cap A_2 \cap A_3\neq \emptyset$.
\begin{claim} \label{at least one is empty}
At least one of the sets $(A_1\cap A_2)\setminus A_3$, $(A_1\cap A_3)\setminus A_2$, and $(A_2\cap A_3)\setminus A_1$ is empty.
\end{claim}
\begin{proof}
\begin{par}
Assume that all of these sets are non-empty. Let (see Figure \ref{intersection_of_three_dangerous_sets})
\begin{itemize}
\item $a= \delta((A_2\cap A_3) \setminus A_1 : A_2\setminus (A_1\cup A_3))$, 
\item $b = \delta((A_1\cap A_2) \setminus A_3 : A_2\setminus (A_1\cup A_3))$, 
\item$c = \delta((A_1\cap A_2) \setminus A_3 : A_1\setminus (A_2\cup A_3))$, 
\item $d = \delta((A_1\cap A_3) \setminus A_2 : A_1\setminus (A_2\cup A_3))$, 
\item $e = \delta((A_1\cap A_3) \setminus A_2 : A_3\setminus (A_1\cup A_2))$, 
\item $f = \delta((A_2\cap A_3) \setminus A_1 : A_3\setminus (A_1\cup A_2))$, 
\item $g = \delta(A_1\cap A_2 \cap A_3 : (A_2\cap A_3)\setminus A_1)$, 
\item $h = \delta(A_1\cap A_2 \cap A_3 : (A_1\cap A_2)\setminus A_3)$, and 
\item $i = \delta(A_1\cap A_2 \cap A_3 : (A_1\cap A_3)\setminus A_2)$. 
\end{itemize}

\begin{figure}[!h]
   \centering
   \includegraphics[scale=0.8]{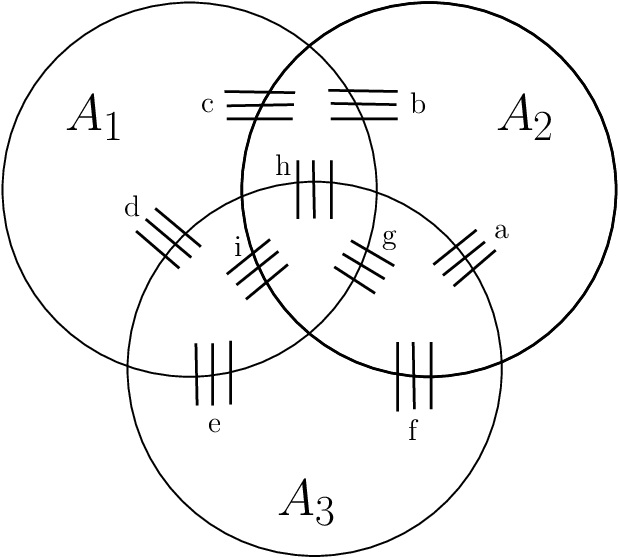} 
   \caption{Intersection of three dangerous sets.}
   \label{intersection_of_three_dangerous_sets}
\end{figure}

These are all the different edge sets between the different parts of the intersections of $A_1$, $A_2$, and $A_3$ since we are given $|\delta(A_i\setminus A_j: A_j\setminus A_i)|= 0$, and $|\delta(A_i\cap A_j: \overline{A_i\cup A_j})|=0$.
\end{par}
\begin{par}
We will show that each one of $g$, $h$, and $i$ is at least $(k-1)/3$ and then show that we have a contradiction, and as a result $(A_1\cap A_2)\setminus A_3$, $(A_1\cap A_3)\setminus A_2$, and $(A_3\cap A_2)\setminus A_1$ cannot be all nonempty.
\end{par}

\begin{par}
If one of $g$, $h$, or $i$ is less than $(k-1)/3$, then by $k$-edge-connectivity, then one of them has to be greater than $(k-1)/3$ because $\delta(A_1\cap A_2\cap A_3)$ contains at most one edge incident with $s$ (hypothesis $(a)$) in addition to the three sets of edges whose sizes are $g$, $h$, and $i$. Suppose without loss of generality that $g$ is less than $(k-1)/3$, and $h$ is greater than $(k-1)/3$. By $(b)$, $g+b=(k-1)/2$ and $h+a=(k-1)/2$, therefore $b$ is greater than $(k-1)/6$ and $a$ is less than $(k-1)/6$.
\end{par}

\begin{par}
Since $(A_2\cap A_3)\setminus A_1$ is nonempty by assumption, $|\delta((A_2\cap A_3)\setminus A_1)|$ has to be at least $k$. 
We must have $f> (k-1)/2$ because $g$ is less than $(k-1)/3$, $a$ is less than $(k-1)/6$, and $\delta((A_2\cap A_3)\setminus A_1)$ contains at most one edge incident with $s$. This is a contradiction to the fact that $f+i=(k-1)/2$ (hypothesis $(b)$).
\end{par}

\begin{par}
Thus each one of $g$, $h$, and $i$ is at least $(k-1)/3$. Then, by $(b)$, $a,b,c,d,e,f$ are each at most $\frac{(k-1)}{6}$. Again, since $(A_2\cap A_3)\setminus A_1$ is nonempty by assumption and contains at most one neighbour of $s$, now $g$ has to be at least $k-(1+\frac{2(k-1)}{6})=\frac{2(k-1)}{3}>\frac{(k-1)}{2}$, a contradiction to $(b)$.
\end{par}

\begin{par}
Thus our assumption that $A_1\cap A_2\cap A_3$, $(A_1\cap A_2)\setminus A_3$, $(A_1\cap A_3)\setminus A_2$, and $(A_3\cap A_2)\setminus A_1$ are all nonempty is false.
\end{par}
\end{proof}

\begin{figure}[!h]
\centering
   \includegraphics[scale=0.8]{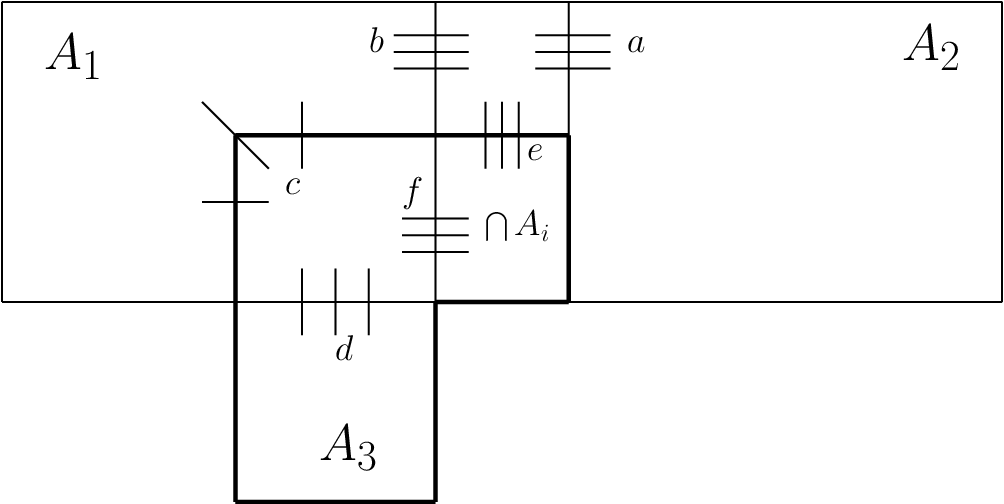} 
   \caption{$(A_3\cap A_2)\setminus A_1$ is empty.}
   \label{A_3_intersect_A_2_delete_A_1_is_empty}
\end{figure}

\begin{claim}
At least two of the sets $(A_1\cap A_2)\setminus A_3$, $(A_1\cap A_3)\setminus A_2$, and $(A_2\cap A_3)\setminus A_1$ are empty.
\end{claim}
\begin{proof}
\begin{par}
 By Claim \ref{at least one is empty}, we may assume $(A_2\cap A_3)\setminus A_1$ is empty and by way of contradiction we assume the other two are not empty. See Figure \ref{A_3_intersect_A_2_delete_A_1_is_empty}.
\end{par}

\begin{par}
Note that there are no edges between $A_1\cap A_2\cap A_3$ and $A_2\setminus A_1$ because $A_2\setminus A_1$ is a subset of $\overline{A_1\cup A_3}$ (as $(A_2\cap A_3)\setminus A_1$ is empty). 
\end{par}
\begin{par}
As illustrated in Figure \ref{A_3_intersect_A_2_delete_A_1_is_empty}, let
\begin{itemize}
\item $a=|\delta((A_1\cap A_2)\setminus A_3: A_2\setminus A_1)|$, \item $b=|\delta((A_1\cap A_2)\setminus A_3: A_1\setminus (A_2\cup A_3))|$, 
\item $c=|\delta((A_1\cap A_3)\setminus A_2:A_1\setminus (A_2\cup A_3))|$, 
\item $d=|\delta((A_1\cap A_3)\setminus A_2:A_3\setminus (A_1\cup A_2))|$, 
\item $e=|\delta(A_1\cap A_2\cap A_3:(A_1\cap A_2)\setminus A_3)|$, and 
\item $f=|\delta(A_1\cap A_2\cap A_3:(A_3\cap A_1)\setminus A_2)|$.
\end{itemize}
\end{par}
\begin{par}
  Note that $\delta(A_1\cap A_2 \cap A_3: A_2\setminus A_3)=\delta(A_1\cap A_2 \cap A_3: (A_1\cap A_2)\setminus A_3)$ ($e$ in Figure \ref{A_3_intersect_A_2_delete_A_1_is_empty}) because $A_1\cap A_2\cap A_3 \subseteq A_1\cap A_3$ and $\delta_{G-s}(A_1\cap A_3 : \overline{A_1 \cup A_3})=\emptyset$.
\end{par}
\begin{par}
If $e<(k-1)/2$, then $f>(k-1)/2$ for $|\delta(A_1\cap A_2\cap A_3)|$ to be at least $k$ since $(A_1\cap A_2\cap A_3)$ contains at most one neighbour of $s$. However, this is impossible because $f+b=(k-1)/2$ (hypothesis $(b)$). Thus, by symmetry, each one of $e$ and $f$ is at least $(k-1)/2$, and so is in fact equal to $(k-1)/2$ and each of $b$ and $c$ is equal to $0$.
\end{par}
\begin{par}
  Since $(A_3\cap A_2)\setminus A_1=\emptyset$, and by hypothesis $(d)$ once applied with $\{i,j\}=\{1,2\}$ and once with $\{i,j\}=\{1,3\}$, we have $\delta(A_1\cap A_3: A_3\setminus A_1)= \delta((A_1\cap A_3)\setminus A_2: A_3\setminus(A_1\cup A_2))$ ($d$ in Figure \ref{A_3_intersect_A_2_delete_A_1_is_empty}) and $\delta(A_1\cap A_2: A_2\setminus A_1)= \delta((A_1\cap A_2)\setminus A_3: A_2\setminus (A_1\cup A_3))$ ($a$ in Figure \ref{A_3_intersect_A_2_delete_A_1_is_empty}). Thus, by hypothesis $(b)$, $a=d=(k-1)/2$.
\end{par}

   \begin{par}
  Since $A_1\cap A_2$ contains at most one neighbour of $s$, either $(A_1\cap A_2)\setminus A_3$ or $A_1\cap A_2 \cap A_3$ does not contain a neighbour of $s$.
  Therefore, either $|\delta((A_1\cap A_2)\setminus A_3)|\leq a+b+e=\frac{k-1}{2}+0+\frac{k-1}{2}=k-1$ or $|\delta(A_1\cap A_2 \cap A_3)|\leq e+f=(k-1)$, a contradiction.
\end{par}
\end{proof}

\begin{figure}[!h]
\centering
   \includegraphics[scale=0.8]{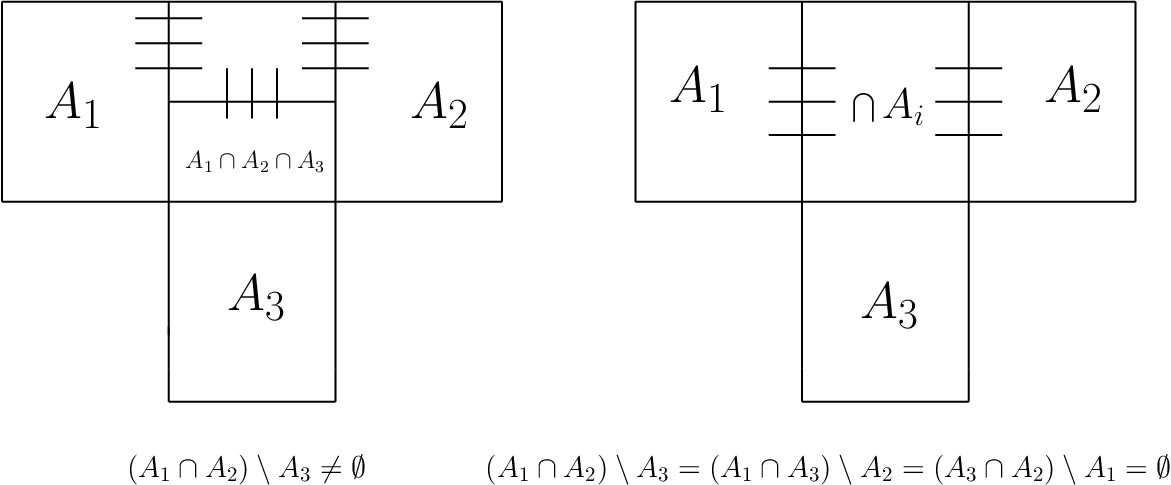} 
   \caption{Only $((A_1\cap A_2)\setminus A_3)$ is possibly nonempty}
   \label{one_part_of_intersection_empty}
\end{figure}

\begin{claim}
All of $(A_1\cap A_2)\setminus A_3$, $(A_1\cap A_3)\setminus A_2$, and $(A_2\cap A_3)\setminus A_1$ are empty.
\end{claim}
\begin{proof}
\begin{par}
  We may assume without loss of generality that only $((A_1\cap A_2)\setminus A_3)$ is nonempty. This is illustrated in the left drawing of Figure \ref{one_part_of_intersection_empty}. There are no edges between $A_1\cap A_2 \cap A_3$ and any set of the form $A_i\setminus (A_j\cup A_l)$, because those will be edges of the sort $\delta_{G-s}(A_j\cap A_l: \overline{A_j\cup A_l})$, and by 
$(d)$ this latter set is empty. Thus the only edges we have coming out of $A_1\cap A_2 \cap A_3$ are to $(A_1\cap A_2)\setminus A_3$. However, since $A_1\cap A_2 \cap A_3= A_2\cap A_3$ in the case we are discussing now, there should be $(k-1)/2$ edges from $A_1\cap A_2 \cap A_3$ to $A_3\setminus A_2$, but $A_3\setminus A_2= A_3\setminus (A_1\cup A_2)$, a contradiction.
\end{par}
\end{proof}

\begin{par}
Now, $(A_1\cap A_2)\setminus A_3$, $(A_1\cap A_3)\setminus A_2$, and $(A_3\cap A_2)\setminus A_1$ are all empty (right drawing in Figure \ref{one_part_of_intersection_empty}). By $(b)$ there are $(k-1)/2$ edges from $A_1\cap A_2=A_1\cap A_2\cap A_3$ to $A_2\setminus A_1$. Those are also edges from $A_1\cap A_3$ to $\overline{A_1\cup A_3}$, and so we have a contradiction because $\delta_{G-s}(A_1\cap A_3:\overline{A_1\cup A_3})=\emptyset$ and $(k-1)/2\geq 1$ as $k\geq 3$.
\end{par} \end{proof}

Now we project the information we found on dangerous sets to independent sets and prove that no three distinct maximal independent sets have a nonempty common intersection. Note that in \cite{Bang-Jensen2000splitting-specified-subset} and \cite{Bang-Jensen2004splitting-between-two-subset} Jord\'an and Beng-Jensen showed that $s$ cannot have a neighbour in the intersection of three maximal dangerous sets, however, they did not show that the intersection itself is empty, but we will show this for dangerous sets too in Lemma \ref{cycle of three dangerous sets} below.

\begin{lemma}\label{common intersection}
Let $G$ be an $(s,k)$-edge-connected graph such that $k\geq 2$ and $\deg(s)\geq 4$. Then no three distinct maximal independent sets of $L(G,s,k)$ have a nonempty common intersection.
\end{lemma}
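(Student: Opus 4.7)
The plan is to argue by contradiction: suppose three distinct maximal independent sets $I_1,I_2,I_3$ of $L(G,s,k)$ share a common vertex $e=sv$. First I would show each $|I_i|\ge 2$: if some $I_i=\{e\}$ were maximal, then $e$ would be a universal vertex of $L(G,s,k)$ and hence could not belong to any other maximal independent set, contradicting $e\in I_j$ for $j\neq i$. Combining this with Theorem \ref{summary of lifting graph facts}(1), every pair satisfies $|I_i\cap I_j|\le 1$, forcing $I_i\cap I_j=\{e\}$. Invoking Definition \ref{the corresponding dangerous}, let $A_i$ be the minimal dangerous set corresponding to $I_i$; then $v\in A_1\cap A_2\cap A_3$, and the goal is to contradict this nonempty triple intersection.

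Next I would split according to whether some pair of the $I_i$'s already covers all of $V(L(G,s,k))$. If $I_i\cup I_j=V(L(G,s,k))$ for some $i,j$, then Theorem \ref{summary of lifting graph facts}(2) forces $\deg(s)$ odd and $|I_i|=|I_j|=(\deg(s)+1)/2=\lceil\deg(s)/2\rceil$; in particular $\deg(s)\ge 5$, so Theorem \ref{summary of lifting graph facts}(3) applies and says $L(G,s,k)$ has exactly two maximal independent sets, contradicting the existence of the third set $I_\ell$. The case $\deg(s)=4$ is automatically excluded here, since then $|I_i\cup I_j|\le 2+2-1=3<4$.

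It remains to handle the case where $I_i\cup I_j\neq V(L(G,s,k))$ for every pair. Here Lemma \ref{two dangerous}(2) applies to each pair $(A_i,A_j)$ and yields $k$ odd (so $k\ge 3$) together with the three edge-counting identities matching hypotheses (b), (c), (d) of Lemma \ref{three dangerous}. For hypothesis (a), I would use maximality of $I_i$: any edge $sw$ with $w\in A_i$ must already lie in $I_i$, since otherwise Proposition \ref{dangerous} would make $I_i\cup\{sw\}$ independent, contradicting maximality. Hence the neighbours of $s$ in $A_i\cap A_j$ are exactly the non-$s$ ends of edges in $I_i\cap I_j=\{e\}$, namely just $v$, so hypothesis (a) is also met. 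Lemma \ref{three dangerous} then yields $A_1\cap A_2\cap A_3=\emptyset$, contradicting $v\in A_1\cap A_2\cap A_3$. The main bit of work I anticipate is the pairwise bookkeeping needed to verify the hypotheses of Lemma \ref{three dangerous} uniformly across all three pairs, but this follows in a straightforward way from applying Lemma \ref{two dangerous}(2) to each of them.
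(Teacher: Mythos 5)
Your proof is correct and follows essentially the same route as the paper: reduce to the case where Lemma~\ref{two dangerous}(2) applies to each pair, transfer its conclusions into hypotheses (a)--(d) of Lemma~\ref{three dangerous}, and derive the contradiction $A_1\cap A_2\cap A_3=\emptyset$. The only organizational difference is that you split on whether some $I_i\cup I_j=V(L(G,s,k))$ and observe this also disposes of $\deg(s)=4$ automatically, whereas the paper phrases the reduction as ``at most one $I_i$ has size $\lceil\deg(s)/2\rceil$'' and handles $\deg(s)=4$ separately via Lemma~\ref{4 cycle}; both reductions are valid and land in the same place.
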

\begin{proof}

 \begin{par}
  By way of contradiction, assume that there are three maximal independent sets $I_1$, $I_2$, and $I_3$ in $L(G,s,k)$ such that $I_1\cap I_2 \cap I_3\neq \emptyset$. Then each of them is of size at least $2$. Let $A_1$, $A_2$, and $A_3$ respectively be corresponding dangerous sets in $G$. By Lemma \ref{two dangerous}, $|I_1\cap I_2 \cap I_3|=1$, so $s$ has exactly one neighbour in $A_1\cap A_2 \cap A_3$. Also, by the previous results, we may assume that $k$ is odd. 
\end{par}

\begin{par}
  We may assume that at most one of $I_1$, $I_2$, and $I_3$ has size $\lceil deg(s)/2 \rceil$: If two intersecting independent sets have this size, then by $(3)$ in Theorem \ref{summary of lifting graph facts}, in case $\deg(s)>4$, $L(G,s,k)$ is an isolated vertex plus a balanced complete bipartite graph, and there does not exist three distinct maximal independent sets, only two. If $\deg(s)=4$, then $L(G,s,k)$ is a perfect matching and its complement is a $4$-cycle and it has four maximal independent sets of size $2$ each, forming a cycle, Lemma \ref{4 cycle}. In this case no three distinct maximal independent sets have a nonempty intersection
  \end{par}
  \begin{par}
  By Lemma \ref{two dangerous}, $k$ is odd since two independent sets not both of size $\lceil \deg(s)/2 \rceil$ have a nonempty intersection, and we have $|\delta(A_i\cap A_j:A_i\setminus A_j)| = |\delta(A_i\cap A_j:A_j\setminus A_i)| = (k-1)/2$ for every distinct $i$ and $j$ in $\{1,2,3\}$.
 \end{par}

\begin{par}
Also from Lemma \ref{two dangerous} we know that for every distinct $i$ and $j$ in $\{1,2,3\}$ we have, $|I_i\cap I_j|= 1$, $|\delta_{G-s}(A_i\cap A_j: \overline{A_i\cup A_j})|=0$, and $|\delta(A_i\setminus A_j: A_j\setminus A_i)|=0$. In particular, the only sets of edges between the different parts of the intersections of $A_1$, $A_2$, and $A_3$ are the ones illustrated in Figure \ref{intersection_of_three_dangerous_sets}. All the conditions of Lemma \ref{three dangerous} are satisfied, therefore, $\bigcap\limits^3_{i=1} A_i =\emptyset$. This is a contradiction since $\bigcap\limits^3_{i=1} A_i$ contains the non-$s$ end-vertex of the edge common between the three maximal independent sets.
\end{par} \end{proof}

The following lemma tells us that for each pair of disjoint maximal independent sets in $L(G,s,k)$ there exist a pair of disjoint dangerous sets corresponding to them in $G$.

\begin{lemma}\label{disjoint dangerous sets}
Let $G$ be an $(s,k)$-edge-connected graph such that $k\geq 2$ and $\deg(s)\geq 4$. If $I_1$ and $I_2$ are disjoint maximal independent sets in $L(G,s,k)$ of size at least $2$ each, and $A_1$ and $A_2$ are corresponding dangerous sets in $G$, then either $A_1\setminus A_2$ or $A_2\setminus A_1$ is dangerous. 
\end{lemma}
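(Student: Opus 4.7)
The plan is to prove the lemma via the posimodular inequality for the cut function:
$$|\delta(A_1 \setminus A_2)| + |\delta(A_2 \setminus A_1)| \leq |\delta(A_1)| + |\delta(A_2)|.$$
Combined with the dangerousness of $A_1$ and $A_2$, this bounds the left side by $2(k+1)$, so at least one of $|\delta(A_1 \setminus A_2)|$, $|\delta(A_2 \setminus A_1)|$ is at most $k+1$. The remainder of the argument is to check that the corresponding set is nonempty and has nonempty complement in $V(G) \setminus \{s\}$, which will qualify it as a dangerous set.

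To derive the posimodular inequality I would use a direct edge-counting argument: partition the edges of $G$ according to which of the four cells $A_1 \cap A_2$, $A_1 \setminus A_2$, $A_2 \setminus A_1$, $\overline{A_1 \cup A_2}$ their two endpoints lie in, and compare contributions to the two sides. Every edge type contributes identically to both sides except edges between $A_1 \cap A_2$ and $\overline{A_1 \cup A_2}$, which contribute $2$ to the right and $0$ to the left. Equivalently, one can start from equation (\ref{intersection of two cuts}) and use the trivial bound $|\delta(A_1 \cap A_2)| + |\delta(\overline{A_1 \cup A_2})| \geq 2|\delta(A_1 \cap A_2 : \overline{A_1 \cup A_2})|$ (since each of the two summands on the left individually bounds the term on the right) to absorb the cross-cut term and extract the tight posimodular bound.

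Having deduced, without loss of generality, that $|\delta(A_1 \setminus A_2)| \leq k+1$, I would verify the two nonemptiness conditions. Pick any edge $sv \in I_1$, which exists as $|I_1| \geq 2$; then $v \in A_1$ because $A_1$ is a dangerous set corresponding to $I_1$. If $v$ were in $A_2$, then all non-$s$ endpoints of edges in $I_2 \cup \{sv\}$ would lie in the dangerous set $A_2$, so by Proposition \ref{dangerous} the set $I_2 \cup \{sv\}$ would be independent in $L(G,s,k)$, contradicting maximality of $I_2$ (as $sv \notin I_2$ because $I_1 \cap I_2 = \emptyset$). Hence $v \in A_1 \setminus A_2$, so this set is nonempty. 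Its complement in $V(G) \setminus \{s\}$ contains $A_2 \setminus A_1$, which is nonempty by the symmetric argument.

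The main obstacle I expect is the first step: obtaining the posimodular inequality tightly within the paper's framework. Applying only non-negativity of the individual summands in (\ref{intersection of two cuts}) yields the too-weak bound $|\delta(A_1 \setminus A_2)| + |\delta(A_2 \setminus A_1)| \leq 2(|\delta(A_1)| + |\delta(A_2)|)$, which loses a factor of two and does not force either set to have cut value at most $k+1$. The sharper bound requires the extra observation about the cross-cut $|\delta(A_1 \cap A_2 : \overline{A_1 \cup A_2})|$ being dominated by $|\delta(A_1 \cap A_2)| + |\delta(\overline{A_1 \cup A_2})|$, or equivalently a direct edge-contribution accounting. Once the inequality is in hand, the rest of the argument is brief and uses only the maximality of $I_1, I_2$ together with Proposition \ref{dangerous}.
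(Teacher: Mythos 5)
Your proof is correct, and it takes a genuinely different route from the paper. The paper proceeds by a two-case analysis on whether $\overline{A_1\cup A_2\cup\{s\}}$ is empty: in the first case it computes $|\delta(A_1)|-|\delta(A_2\setminus A_1)|$ directly, and in the second case it introduces slack variables $\varepsilon_i=|\delta_{G-s}(A_i\setminus A_{3-i})|-(k-k_i)$, plugs the bounds $(1)$--$(6)$ into equation~(\ref{intersection of two cuts}) applied in $G-s$, and concludes $\varepsilon_1+\varepsilon_2\le 2$. You instead invoke posimodularity of the cut function in $G$, $|\delta(A_1\setminus A_2)|+|\delta(A_2\setminus A_1)|\le|\delta(A_1)|+|\delta(A_2)|\le 2(k+1)$, in a single stroke with no case split; this is cleaner, and your direct edge-contribution derivation of posimodularity (only $\delta(A_1\cap A_2:\overline{A_1\cup A_2})$-edges contribute asymmetrically, by $+2$ to the right side) is exactly right. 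Your nonemptiness check via Proposition~\ref{dangerous} and maximality of $I_1,I_2$ is also correct, and in fact is more explicit than what the paper writes.

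One small inaccuracy in a parenthetical aside: you suggest that posimodularity can ``equivalently'' be extracted from equation~(\ref{intersection of two cuts}) together with the trivial bound $|\delta(A_1\cap A_2)|+|\delta(\overline{A_1\cup A_2})|\ge 2|\delta(A_1\cap A_2:\overline{A_1\cup A_2})|$. That substitution still only yields the factor-of-two bound $|\delta(A_1\setminus A_2)|+|\delta(A_2\setminus A_1)|\le 2(|\delta(A_1)|+|\delta(A_2)|)$, not the tight one: equation~(\ref{intersection of two cuts}) is the \emph{sum} of the submodular identity $|\delta(A_1)|+|\delta(A_2)|=|\delta(A_1\cap A_2)|+|\delta(A_1\cup A_2)|+2|\delta(A_1\setminus A_2:A_2\setminus A_1)|$ and the posimodular identity, so isolating the posimodular part from it requires independently knowing the submodular identity. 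This does not affect your proof, since your primary derivation (direct edge accounting) stands on its own, but the ``equivalently'' should be dropped or replaced by an appeal to the submodular identity.
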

\begin{proof}
\begin{par}
Consider two disjoint maximal independent sets $I_1$ and $I_2$ of $L(G,s,k)$ and let $A_1$ and $A_2$ be corresponding dangerous sets in $G$ such that $A_1\cap A_2 \neq \emptyset$. Define $k_1$ to be $|\delta(\{s\}:A_1\setminus A_2)|$, $k_2=|\delta(\{s\}:A_2\setminus A_1)|$, and $k_3=|\delta(\{s\}:A_1\cap A_2)|$. By the assumption of the independent sets being disjoint, $k_3=0$, so $k_1$ and $k_2$ are the sizes of the two independent sets.
\end{par}
\begin{par}
Now, in $G-s$,
\begin{enumerate}
  \item[(1)]$|\delta_{G-s}(A_1)|\leq (k+1)-k_1$,
  \item[(2)]$|\delta_{G-s}(A_2)|\leq (k+1)-k_2$, 
  \item[(3)]$|\delta_{G-s}(A_1\cap A_2)|\geq k$, \item[(4)]$|\delta_{G-s}(A_2\setminus A_1)|\geq k-k_2$, \item[(5)]$|\delta_{G-s}(A_1\setminus A_2)|\geq k-k_1$, and 
  \item[(6)]if $\overline{A_1\cup A_2 \cup \{s\}}\neq \emptyset$, then $|\delta_{G-s}(\overline{A_1\cup A_2})|\geq (k+2)-(k_1+k_2)$ because $A_1\cup A_2$ is not dangerous.
\end{enumerate}
\end{par}

\begin{par}
  In the first case, $\overline{A_1\cup A_2 \cup \{s\}}= \emptyset$. Then either $k_1=k_2=\deg(s)/2$, in case $\deg(s)$ is even, or one of $k_1$ and $k_2$, say $k_1$, is $\frac{(\deg(s)+1)}{2}$ and the other is $\frac{(\deg(s)-1)}{2}$ in case $\deg(s)$ is odd (Frank's theorem \ref{Frank}). Since $\overline{A_1\cup A_2 \cup \{s\}}= \emptyset$, we have,
  
  $$|\delta(A_1)|=k_1+|\delta(A_1\cap A_2: A_2\setminus A_1)|+|\delta(A_1\setminus A_2:A_2\setminus A_1)|$$ 
  and 
  $$|\delta(A_2\setminus A_1)|=k_2+|\delta(A_1\cap A_2: A_2\setminus A_1)|+|\delta(A_1\setminus A_2:A_2\setminus A_1)|.$$
  
  Thus, $0\leq |\delta(A_1)|-|\delta(A_2\setminus A_1)|\leq 1$, so $A_2\setminus A_1$ is dangerous, and we are done.
\end{par}

\begin{par}
  We will show that, in case $\overline{A_1\cup A_2 \cup \{s\}}\neq \emptyset$, either $\delta_{G-s}(A_1\setminus A_2)\leq (k+1)-k_1$ or $\delta_{G-s}(A_2\setminus A_1)\leq (k+1)-k_2$, i.e. $\delta_{G}(A_1\setminus A_2)\leq (k+1)$ or $\delta_{G}(A_2\setminus A_1)\leq (k+1)$. That is either $(A_1\setminus A_2)$ or $(A_2\setminus A_1)$ is dangerous in $G$. Then we can replace $A_1$ by $A_1\setminus A_2$ or replace $A_2$ by $A_2\setminus A_1$ and consequently have disjoint dangerous sets.
\end{par}
\begin{par}
 In the remaining case, $\overline{A_1\cup A_2 \cup \{s\}}\neq \emptyset$.
 Let $\varepsilon_i=|\delta_{G-s}(A_i\setminus A_{3-i})|-(k-k_i)$, for $i=1,2$. Then $(1)-(5)$ applied to the standard equation \ref{intersection of two cuts} in $G-s$ gives,
  
    $$4k-2(k_1+k_2)+2+\varepsilon_1+\varepsilon_2\leq RHS=LHS\leq 4k-2(k_1+k_2)+4$$
  
  Therefore $\varepsilon_1+\varepsilon_2\le 2$, so, for some $i\in\{1,2\}$, $\varepsilon_i\le 1$.  For such an $i$, $A_i\setminus A_{3-i}$ is dangerous, as required.
  \end{par}\end{proof}

\begin{lemma} \label{minimal dangerous}
Let $G$ be an $(s,k)$-edge-connected graph such that $k\geq 2$ and $\deg(s)\geq 4$. If $A_1$ and $A_2$ are the dangerous sets in $G$ corresponding to two disjoint maximal independent sets $I_1$ and $I_2$ in $L(G,s,k)$, then $A_1$ and $A_2$ are disjoint.
\end{lemma}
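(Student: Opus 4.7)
The plan is to argue by contradiction, assuming $A_1\cap A_2\neq\emptyset$, and to use the previous lemma to produce a dangerous set corresponding to $I_1$ (or $I_2$) that is strictly smaller than $A_1$ (or $A_2$), contradicting the uniqueness of the minimal corresponding dangerous set from Lemma \ref{unique minimal dangerous}.

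First I would establish that no edge incident with $s$ has its non-$s$ end in $A_1\cap A_2$. By Proposition \ref{dangerous} (applicable since $\deg(s)\geq 4\neq 3$), the edge set $\delta(\{s\}:A_i)$ is pairwise non-liftable, hence independent in $L(G,s,k)$. Since it contains $I_i$ and $I_i$ is maximal, we have the identification $\delta(\{s\}:A_i)=I_i$ as edge sets. If some edge $sv$ with $v\in A_1\cap A_2$ existed, it would lie in both $I_1$ and $I_2$, contradicting $I_1\cap I_2=\emptyset$. Consequently, every non-$s$ end-vertex of an edge in $I_1$ lies in $A_1\setminus A_2$ (and symmetrically for $I_2$ in $A_2\setminus A_1$).

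Next I would apply Lemma \ref{disjoint dangerous sets} to get that either $A_1\setminus A_2$ or $A_2\setminus A_1$ is dangerous; say without loss of generality $A_1\setminus A_2$ is. To confirm this qualifies under the definition of a dangerous set, note that $A_1\setminus A_2$ is non-empty because it contains the (at least two) non-$s$ ends of edges in $I_1$ by the previous paragraph, and $V(G)\setminus((A_1\setminus A_2)\cup\{s\})\supseteq A_1\cap A_2\neq\emptyset$ by our contradiction hypothesis.

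Then $A_1\setminus A_2$ is a dangerous set containing all non-$s$ end-vertices of edges in $I_1$, i.e., a dangerous set corresponding to $I_1$ in the sense of Definition \ref{the corresponding dangerous}. Since $A_1\cap A_2\neq\emptyset$, we have $A_1\setminus A_2\subsetneq A_1$, contradicting the fact that $A_1$ is the unique minimal dangerous set corresponding to $I_1$ (Lemma \ref{unique minimal dangerous}). The symmetric argument handles the case where $A_2\setminus A_1$ is the dangerous one. In either case we reach a contradiction, so $A_1\cap A_2=\emptyset$. The proof is short because the heavy lifting was already done in Lemma \ref{disjoint dangerous sets}; the only subtlety is the clean identification $\delta(\{s\}:A_i)=I_i$ forced by maximality, which is what rules out any edge of $s$ entering $A_1\cap A_2$ and ensures that $A_1\setminus A_2$ still ``corresponds'' to $I_1$.
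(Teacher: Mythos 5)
Your proof is correct and takes essentially the same route as the paper: apply Lemma~\ref{disjoint dangerous sets} to conclude one of $A_1\setminus A_2$, $A_2\setminus A_1$ is dangerous, then derive a contradiction with the minimality built into Definition~\ref{the corresponding dangerous} via Lemma~\ref{unique minimal dangerous}. The one thing you add, which the paper's terse proof leaves implicit, is the explicit verification that (say) $A_1\setminus A_2$ still \emph{corresponds} to $I_1$ — i.e.\ that $\delta(\{s\}:A_i)=I_i$ so no non-$s$ end of an edge in $I_1$ sits in $A_1\cap A_2$, and that $A_1\setminus A_2$ is nonempty with nonempty complement minus $s$; this is a worthwhile detail to spell out.
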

\begin{proof}
Recall from Definition \ref{the corresponding dangerous} that $A_1$ and $A_2$ are, in fact, the unique minimal dangerous sets in $G$ corresponding to $I_1$ and $I_2$. By Lemma \ref{disjoint dangerous sets}, since $I_1$ and $I_2$ are disjoint, either $A_1\setminus A_2$ or $A_2\setminus A_1$ is dangerous. If $A_1\cap A_2\neq \emptyset$, then this contradicts the minimality of either $A_1$ or $A_2$.
\end{proof}

\begin{remark}
By taking the unique minimal dangerous sets in $G$ corresponding to the maximal independent sets of $L(G,s,k)$, we have the path and cycle structures of the components of $I(L(G,s,k))$ reflected in the arrangement of these dangerous sets as will be seen in Lemmas \ref{cycle of three dangerous sets} and \ref{component of max indep sets}.
\end{remark}

\begin{lemma}\label{cycle of three dangerous sets}
 Let $G$ be an $(s,k)$-edge-connected graph such that $k\geq 2$ and $\deg(s)\geq 4$. Then for any three distinct maximal independent sets $I_1,I_2,I_3$ in $L(G,s,k)$, of size at least $2$ each, the corresponding dangerous sets $A_1,A_2,A_3$ in $G$ satisfy $A_1\cap A_2\cap A_3=\emptyset$. In particular, if $I_1,I_2,I_3$ form a cycle in $I(L(G,s,k))$, then for $\{i,j,l\}=\{1,2,3\}$, $A_i$ intersects $A_j\cup A_l$ exactly in $A_j\setminus A_l$ and $A_l\setminus A_j$, see Figure \ref{cycle_of_three_dangerous_sets}.
\end{lemma}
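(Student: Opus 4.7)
\emph{Proof proposal.}
The plan is to split on the pairwise intersections of $I_1,I_2,I_3$. If some pair satisfies $I_i\cap I_j=\emptyset$, then Lemma~\ref{minimal dangerous} gives $A_i\cap A_j=\emptyset$, so $A_1\cap A_2\cap A_3=\emptyset$ immediately. Assume therefore that every pair $I_i\cap I_j$ is nonempty; by Lemma~\ref{two dangerous}(1) each pairwise intersection has size exactly one.

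I would next rule out the possibility that two of the $I_i$'s attain the maximum size $\lceil\deg(s)/2\rceil$. For $\deg(s)>4$, Theorem~\ref{summary of lifting graph facts}(3) would force $L(G,s,k)$ to have only two maximal independent sets, contradicting the hypothesis of three distinct ones. For $\deg(s)=4$, Lemma~\ref{4 cycle} leaves only the perfect-matching case (since $C_4$ and $K_4$ do not have three maximal independent sets of size at least $2$), whose four maximal independent sets form a $4$-cycle in $I(L(G,s,k))$; any three of them therefore contain a disjoint pair, returning us to the first case. Hence at most one $I_i$ has size $\lceil\deg(s)/2\rceil$, and by Theorem~\ref{summary of lifting graph facts}(2) no pair satisfies $I_i\cup I_j=V(L(G,s,k))$.

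Lemma~\ref{two dangerous}(2) now applies to every pair $(I_i,I_j)$, yielding that $k$ is odd (hence $k\geq 3$), $|\delta(A_i\cap A_j:A_j\setminus A_i)|=(k-1)/2$, $|\delta(A_i\setminus A_j:A_j\setminus A_i)|=0$, and $|\delta_{G-s}(A_i\cap A_j:\overline{A_i\cup A_j})|=0$. The remaining hypothesis of Lemma~\ref{three dangerous} --- that $A_i\cap A_j$ contains at most one neighbour of $s$ --- uses maximality of the $I_i$'s: if $v\in A_i$ were a neighbour of $s$ with $sv\notin I_i$, then $A_i$ would be a dangerous set containing the endpoints of all edges in $\{sv\}\cup I_i$, so Proposition~\ref{dangerous} would make $\{sv\}\cup I_i$ independent in $L(G,s,k)$, contradicting the maximality of $I_i$. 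Thus $A_i\cap A_j$ meets $N(s)$ only in the single endpoint shared by $I_i$ and $I_j$, and Lemma~\ref{three dangerous} delivers $A_1\cap A_2\cap A_3=\emptyset$.

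For the ``in particular'' clause, a cycle on three vertices in $I(L(G,s,k))$ is a triangle, so all three pairs $I_i\cap I_j$ are nonempty and the shared neighbour of $s$ witnesses $A_i\cap A_j\neq\emptyset$. Combined with $A_1\cap A_2\cap A_3=\emptyset$, this gives $A_i\cap A_j\subseteq A_j\setminus A_l$ (nonempty) and $A_i\cap A_l\subseteq A_l\setminus A_j$ (nonempty), while $A_i$ is disjoint from $A_j\cap A_l$; this is exactly the claimed intersection pattern. The main technical hurdle is verifying condition (a) of Lemma~\ref{three dangerous} --- that each $A_i\cap A_j$ contains at most one neighbour of $s$ --- which is where the maximality argument above is essential; once that is in hand, the conclusion follows from the technical lemmas already established.
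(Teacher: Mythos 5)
Your proof is correct and follows essentially the same route as the paper: reduce to the all-pairs-intersecting case via Lemma~\ref{minimal dangerous}, then apply Lemma~\ref{three dangerous} after establishing its hypotheses via Lemma~\ref{two dangerous}(2) and Theorem~\ref{summary of lifting graph facts}. The one small deviation is that the paper first invokes Lemma~\ref{common intersection} to get $I_1\cap I_2\cap I_3=\emptyset$ and then enumerates four cases, whereas you fold the case analysis into a two-way split and verify hypothesis~(a) of Lemma~\ref{three dangerous} (at most one neighbour of $s$ in each $A_i\cap A_j$) directly by the maximality-plus-Proposition~\ref{dangerous} argument — the paper reads this off from the choice of corresponding dangerous set, the relevant observation appearing inside the proof of Lemma~\ref{two dangerous}. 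Your route sidesteps one invocation of Lemma~\ref{common intersection} (itself proved via Lemma~\ref{three dangerous}), so it is arguably a slight streamlining, but the substance of the argument is the same.
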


\begin{figure}[!h]
\centering
   \includegraphics[scale=0.45]{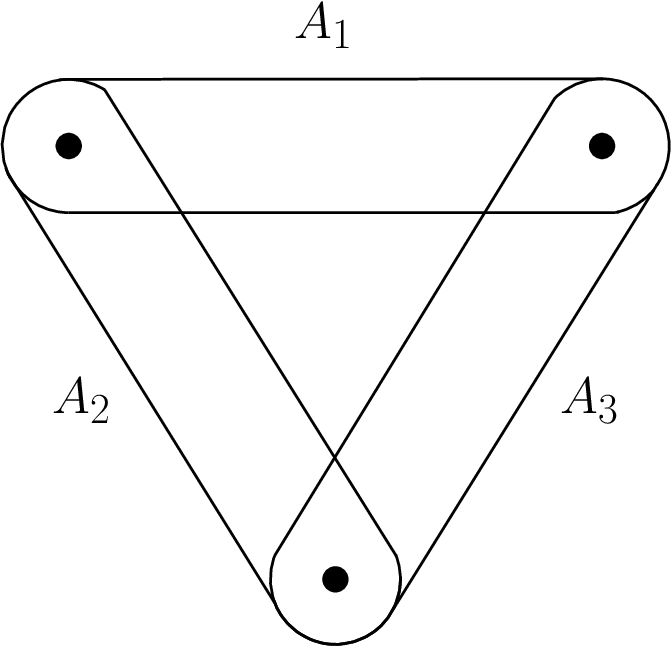} 
   \caption{Cycle of three dangerous sets.}
   \label{cycle_of_three_dangerous_sets}
\end{figure}

\begin{proof}
\begin{par}
By Lemma \ref{common intersection} $I_1\cap I_2\cap I_3 = \emptyset$.
Up to symmetry there are four cases to consider:
\begin{itemize}
\item [(1)] $I_1\cap I_2 \neq \emptyset$, $I_2\cap I_3 \neq \emptyset$, but $I_1\cap I_3= \emptyset$, that is $I_1I_2I_3$ is an induced path in $I(L(G,s,k))$,
\item [(2)] $I_1\cap I_2\neq \emptyset$ but $I_3$ is disjoint from $I_1\cup I_2$,
\item [(3)] $I_1$, $I_2$, and $I_3$ are pairwise disjoint,
\item [(4)] $I_1$, $I_2$, and $I_3$ make a cycle in $I(L(G,s,k))$.
\end{itemize}
\end{par}
\begin{par}
By Lemma \ref{minimal dangerous}, in cases $(1)$ - $(3)$, at least two of $A_1$, $A_2$, and $A_3$ are disjoint. Then $A_1\cap A_2\cap A_3=\emptyset$. Now suppose that $I_1,I_2,I_3$ form a cycle in $I(L(G,s,k))$. By Lemma \ref{two dangerous}, each $I_i$ and $I_j$ intersect in exactly one vertex, so $s$ has exactly one neighbour in $A_i\cap A_j$.
\end{par}
 \begin{par}
   For distinct $i,j \in \{1,2,3\}$, $I_i\cup I_j\neq V(L(G,s,k))$ because, by $(2)$ and $(3)$ in Theorem \ref{summary of lifting graph facts}, the only case in which the union of two intersecting maximal independent sets of $L(G,s,k)$ equals $V(L(G,s,k))$ is the case when $L(G,s,k)$ is an isolated vertex plus a complete balanced bipartite graph, and there are only two maximal independent sets in $L(G,s,k)$ in that case, whereas here we have at least three maximal independent sets. Thus, by Lemma \ref{two dangerous}, hypotheses $(a)-(d)$ of Lemma \ref{three dangerous} hold for any distinct $i,j \in \{1,2,3\}$. Therefore, $A_1\cap A_2\cap A_3=\emptyset$.\end{par}\end{proof}

The following proposition describes the situation when a dangerous set is intersected by two other dangerous sets (see Figure \ref{dangerous_set_between_two_dangerous_sets}). Compare this to \cite[Lemma~3.3]{Jordan1999ConstrainedSplitting}, which is proved under the assumption that $\deg(s)$ is even.

\begin{lemma}\label{dangerous set between two other dangerous sets}
Let $G$ be an $(s,k)$-edge-connected graph such that $k\geq 2$ and $\deg(s)\geq 4$. Let $A_1,A_2,A_3$ be, in order, dangerous sets corresponding to consecutive vertices $I_1,I_2,I_3$ in a path or $3$-cycle in $I(L(G,s,k))$. Then (see Figure \ref{dangerous_set_between_two_dangerous_sets}),
\begin{enumerate}

\item[(1)] for $i= 1,3$, $|\delta(\{s\}: A_i\cap A_2)|=1$;
\item[(2)] $\delta(A_2)= \delta(A_2\cap A_1: A_1\setminus A_2) \cup \delta(\{s\}: A_2) \cup \delta(A_2\cap A_3: A_3\setminus A_2)$;
\item[(3)] $|I_2|=2$; 
\item[(4)] $A_2\setminus (A_1\cup A_3)=\emptyset$; and
\item[(5)] $|\delta(A_1\cap A_2 : A_2\cap A_3)|= (k-1)/2$.

\end{enumerate}

\end{lemma}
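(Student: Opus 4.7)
The plan is to apply Lemma~\ref{two dangerous}(2) to each of the pairs $(A_1,A_2)$ and $(A_2,A_3)$, and then to carry out an edge-count around $A_2$.

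I first verify the hypotheses of Lemma~\ref{two dangerous}(2) for each pair $(I_i,I_{i+1})$. Consecutive vertices in $I(L(G,s,k))$ intersect, and by Lemma~\ref{two dangerous}(1) the intersection has size exactly $1$. Moreover, $I_i\cup I_{i+1}\ne V(L(G,s,k))$: otherwise Theorem~\ref{summary of lifting graph facts}(2)--(3) would force $L(G,s,k)$ to consist of an isolated vertex plus a balanced complete bipartite graph, with only two maximal independent sets, contradicting the presence of the third distinct one. Lemma~\ref{two dangerous}(2) then provides, for $i=1,2$: $|\delta(A_i)|=|\delta(A_{i+1})|=k+1$; $|\delta(A_i\setminus A_{i+1}: A_{i+1}\setminus A_i)|=0$; $|\delta_{G-s}(A_i\cap A_{i+1}: \overline{A_i\cup A_{i+1}})|=0$; $|\delta(A_i\cap A_{i+1}: A_i\setminus A_{i+1})|=|\delta(A_i\cap A_{i+1}: A_{i+1}\setminus A_i)|=(k-1)/2$; and $k$ is odd. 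Claim (1) follows at once: as noted in the proof of Lemma~\ref{two dangerous}, the neighbors of $s$ lying in $A_i$ are exactly the non-$s$ endpoints of the edges of $I_i$, so $\delta(\{s\}:A_1\cap A_2)$ corresponds bijectively to $I_1\cap I_2$, which has size $1$, and likewise for $A_2\cap A_3$.

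For (3), observe that the three sets $\delta(\{s\}:A_2)$, $\delta(A_1\cap A_2: A_1\setminus A_2)$, $\delta(A_2\cap A_3: A_3\setminus A_2)$ are pairwise disjoint subsets of $\delta(A_2)$ (distinguished by their endpoint in $A_2$, using $A_1\cap A_2\cap A_3=\emptyset$ from Lemma~\ref{cycle of three dangerous sets}), of sizes $|I_2|$, $(k-1)/2$, and $(k-1)/2$. Thus $k+1=|\delta(A_2)|\ge |I_2|+(k-1)$, so $|I_2|\le 2$. Since the edges in $I_1\cap I_2$ and $I_2\cap I_3$ are distinct by Lemma~\ref{common intersection} and both lie in $I_2$, we conclude $|I_2|=2$.

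The crucial step is (4). Let $B=A_2\setminus(A_1\cup A_3)$; I show $B=\emptyset$ by deriving a cut of size less than $k$ at $B$ otherwise. All neighbors of $s$ in $A_2$ are endpoints of the two edges of $I_2$, which lie in $(A_1\cap A_2)\cup(A_2\cap A_3)$, so no edge joins $s$ to $B$. By Lemma~\ref{two dangerous}(2)(b) applied to both pairs, no edge of $B$ goes to $A_1\setminus A_2$ or to $A_3\setminus A_2$. Finally, the edge-count just used for (3) is tight: $|\delta(A_2)|=k+1$ is already exhausted by the three named sets, so no edge leaves $A_2$ into $\overline{A_1\cup A_2\cup A_3\cup\{s\}}$. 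Hence every edge at $B$ enters $A_1\cap A_2$ or $A_2\cap A_3$. Setting $x=|\delta(A_1\cap A_2:A_2\cap A_3)|$, $y=|\delta(B:A_1\cap A_2)|$, and $z=|\delta(B:A_2\cap A_3)|$, the decomposition $A_2\setminus A_1=(A_2\cap A_3)\sqcup B$ together with Lemma~\ref{two dangerous}(2)(d) gives $x+y=x+z=(k-1)/2$, so $y=z$ and $|\delta(B)|=2y\le k-1<k$, contradicting $(s,k)$-edge-connectivity (noting that $A_1\subseteq V(G)\setminus(B\cup\{s\})$ is nonempty). Hence $B=\emptyset$. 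Parts (2) and (5) follow at once: $A_2\cap A_3=A_2\setminus A_1$, so Lemma~\ref{two dangerous}(2)(d) yields $|\delta(A_1\cap A_2:A_2\cap A_3)|=(k-1)/2$, proving (5); and the three pairwise disjoint sets listed in (2) have total size $k+1=|\delta(A_2)|$, so they fill $\delta(A_2)$. The main obstacle is (4), and the trick is to ``isolate'' $B$ from everything outside $A_2\setminus B$ before invoking $(s,k)$-edge-connectivity.
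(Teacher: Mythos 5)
Your proposal is correct and follows essentially the same route as the paper: apply Lemma~\ref{two dangerous}(2) to the pairs $(I_1,I_2)$ and $(I_2,I_3)$ after verifying $I_i\cup I_{i+1}\ne V(L(G,s,k))$ via Theorem~\ref{summary of lifting graph facts}, count $\delta(A_2)$ to get $|I_2|=2$ and the decomposition in (2), rule out $A_2\setminus(A_1\cup A_3)\ne\emptyset$ by showing it would have at most $k-1$ incident edges, and then read off (5). The only (minor, beneficial) deviations are cosmetic: you cite Lemma~\ref{cycle of three dangerous sets} to make the pairwise disjointness of the three edge sets in $\delta(A_2)$ explicit, and in step (4) you observe $y=z$ rather than simply bounding $y+z$ by $(k-1)/2+(k-1)/2$ as the paper does; both lead to the same contradiction $|\delta(B)|\le k-1$.
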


\begin{figure}[!h]
\centering
   \includegraphics[scale=0.6]{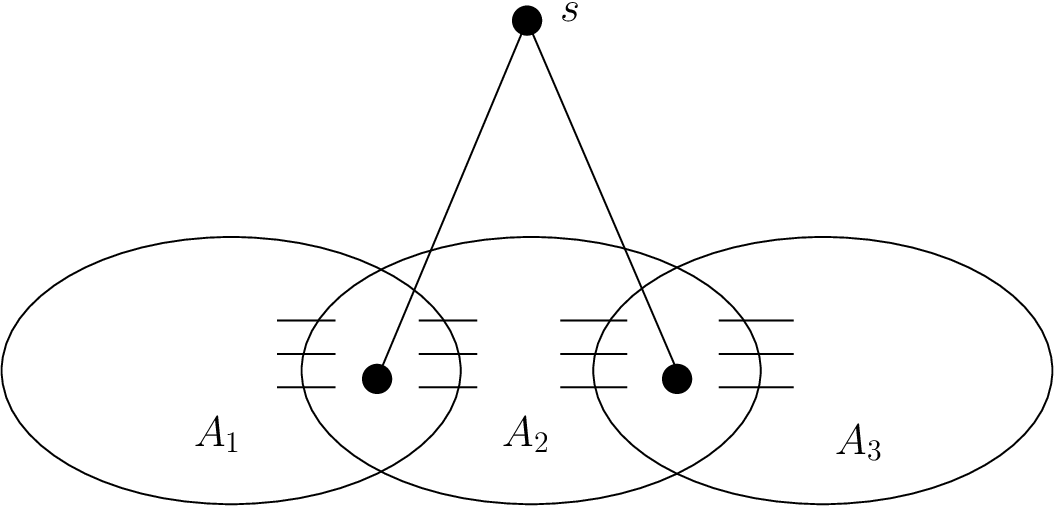} 
   \caption{Dangerous set between two dangerous sets}
   \label{dangerous_set_between_two_dangerous_sets}
\end{figure}

\begin{proof}
\begin{par}
  Let $A_1$, $A_2$, and $A_3$ be three dangerous sets as described above. See Figure \ref{dangerous_set_between_two_dangerous_sets}. By $(2)$ and $(3)$ in Theorem \ref{summary of lifting graph facts}, the only case in which the union of two intersecting maximal independent sets is equal to $V(L(G,s,k))$ is when $L(G,s,k)$ is an isolated vertex plus a balanced complete bipartite graph, and there are exactly two maximal independent sets in that case, not three. Thus, regarded as maximal independent sets, $I_2\cup I_3 \neq V(L(G,s,k))$ and $I_2\cup I_1 \neq V(L(G,s,k))$. Therefore Lemma \ref{two dangerous} applies to $A_2$ with $A_1$, and with $A_3$.  
  \end{par}
  \begin{par}
  In particular we have, $|\delta(A_1\cap A_2:A_1\setminus A_2)|=|\delta(A_3\cap A_2:A_3\setminus A_2)|=(k-1)/2$, and $|\delta(\{s\}:A_1\cap A_2)|=|\delta(\{s\}:A_3\cap A_2)|=1$. This is a total of $1+1+\frac{(k-1)}{2}+\frac{(k-1)}{2}=k+1$ edges in $\delta(A_2)$. There cannot be more since $A_2$ is dangerous.
  \end{par}
  \begin{par}
  Thus, $\delta(A_2)= \delta(A_2\cap A_1: A_1\setminus A_2) \cup \delta(\{s\}: A_2) \cup \delta(A_2\cap A_3: A_3\setminus A_2)$, and the two edges from $s$ to $A_1\cap A_2$ and $A_2\cap A_3$ are the only edges between $s$ and $A_2$, i.e. $|I_2|=2$.
  Moreover, $|\delta_{G-s}(A_2:\overline{A_1\cup A_2 \cup A_3})|=0$ and $|\delta(\{s\}:A_2\setminus (A_1\cup A_3))|=0$.
\end{par}

\begin{par}
  Now we show that $A_2\setminus (A_1\cup A_3)=\emptyset$. By way of contradiction suppose not. By Lemma \ref{two dangerous}, $|\delta(A_2\setminus (A_1\cup A_3): (A_1\setminus A_2)\cup (A_3\setminus A_2))|=0$. Since we also have $|\delta_{G-s}(A_2:\overline{A_1\cup A_2 \cup A_3})|=0$ and $|\delta(\{s\}:A_2\setminus (A_1\cup A_3))|=0$, it follows that 
  $$|\delta(A_2\setminus (A_1\cup A_3))|=|\delta(A_2\setminus (A_1\cup A_3):A_1\cap A_2)|+|\delta(A_2\setminus (A_1\cup A_3):A_3\cap A_2)|.$$ 
  This is less than or equal to $|\delta(A_2\setminus A_1:A_1\cap A_2)|+|\delta(A_2\setminus A_3:A_3\cap A_2)|=\frac{(k-1)}{2}+\frac{(k-1)}{2}=k-1$, a contradiction since $G$ is $(s,k)$-edge-connected. Thus, $A_2\setminus (A_1\cup A_3)=\emptyset$.
  \end{par}
  \begin{par}
  Now, since $A_2\setminus (A_1\cup A_3)=\emptyset$, $A_2\setminus A_1 = A_2\cap A_3$. Thus from Lemma \ref{two dangerous}, part (2)-(d), we have  $|\delta(A_1\cap A_2 : A_2\cap A_3)|= (k-1)/2$.
\end{par} \end{proof}

Using the previous lemmas we can now prove that the maximal independent sets of $L(G,s,k)$ are arranged in paths and cycles, and the same arrangements occurs for the corresponding dangerous sets.

\begin{lemma}\label{component of max indep sets}
Let $G$ be an $(s,k)$-edge-connected graph such that $k\geq 2$ and $\deg(s)\geq 4$. Then every component of $I(L(G,s,k))$ is either a path (possibly a singleton) or a cycle. Moreover, the corresponding dangerous sets in $G$ respectively make a path or a cycle via intersections.
\end{lemma}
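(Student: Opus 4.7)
The plan is to first establish that every vertex of $I(L(G,s,k))$ has degree at most $2$. A connected graph with maximum degree at most $2$ is automatically a path (possibly a single vertex) or a cycle, so this handles the first assertion of the lemma. For the second assertion, I would then verify that the intersection graph of the corresponding (unique minimal) dangerous sets is isomorphic to $I(L(G,s,k))$ via the natural bijection, so that the two ``path or cycle via intersections'' structures coincide.

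For the degree bound, I would argue by contradiction: suppose some maximal independent set $I_2$ is adjacent in $I(L(G,s,k))$ to three distinct maximal independent sets $I_1$, $I_3$, $I_4$. By Lemma \ref{two dangerous}(1), each of the three intersections $I_2\cap I_1$, $I_2\cap I_3$, $I_2\cap I_4$ is a singleton, and by Lemma \ref{common intersection} no vertex of $L(G,s,k)$ lies in three of the $I_j$'s, so these singletons are pairwise distinct. This forces $|I_2|\geq 3$. On the other hand, $I_1,I_2,I_3$ are consecutive in a path (or a triangle, when $I_1\cap I_3\neq\emptyset$) in $I(L(G,s,k))$, so Lemma \ref{dangerous set between two other dangerous sets} applies and yields $|I_2|=2$, a contradiction. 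Hence every vertex of $I(L(G,s,k))$ has degree at most $2$, and each of its components is a path or a cycle.

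To transfer the structure to the dangerous sets, one direction is immediate: if $sv\in I_i\cap I_j$, then $v\in A_i\cap A_j$, so adjacency in $I(L(G,s,k))$ implies a nonempty intersection of the corresponding dangerous sets. For the converse, if $I_i\cap I_j=\emptyset$, then Lemma \ref{minimal dangerous} --- applied to the unique minimal dangerous sets chosen in Definition \ref{the corresponding dangerous} --- forces $A_i\cap A_j=\emptyset$. Hence the intersection graph of the $A_i$'s coincides with $I(L(G,s,k))$, and within each component the dangerous sets inherit the same path or cycle structure via intersections.

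The main subtlety I anticipate is making sure the invocation of Lemma \ref{dangerous set between two other dangerous sets} in the degree-bound step is legitimate: that lemma requires $I_1,I_2,I_3$ to be consecutive in ``a path or $3$-cycle'' in $I(L(G,s,k))$, and one must observe that this condition is automatic whenever $I_2$ is adjacent to both $I_1$ and $I_3$, regardless of whatever further neighbours $I_2$ might have. Once that point is clear, all other ingredients reduce to bookkeeping with the lemmas already in hand.
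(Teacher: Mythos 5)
Your proof is correct and follows essentially the same route as the paper: bound the degree in $I(L(G,s,k))$ by $2$ via Lemma~\ref{common intersection} and Lemma~\ref{dangerous set between two other dangerous sets}, then transfer the structure to the minimal dangerous sets via Lemma~\ref{minimal dangerous}. The only cosmetic difference is which conclusion of Lemma~\ref{dangerous set between two other dangerous sets} you use to close the contradiction: you argue $|I_2|\geq 3$ (three pairwise-distinct singleton intersections) against part~(3)'s $|I_2|=2$, whereas the paper locates the non-$s$ end of the edge in $I_4\cap I_1$ inside $(A_4\cap A_1)\setminus(A_2\cup A_3)$ and contradicts part~(4)'s $A_1\setminus(A_2\cup A_3)=\emptyset$; both are equally valid and draw on the same hypotheses, and your remark that the ``consecutive in a path or $3$-cycle'' premise is met as soon as $I_2$ meets both $I_1$ and $I_3$ (since the proof of that lemma only needs three distinct maximal independent sets with those two intersections nonempty) is the right justification.
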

\begin{proof}
\begin{par}
We will show that $I(L(G,s,k))$ has no vertex of degree at least 3. Therefore, each component of $I(L(G,s,k))$ is either a cycle or a path. Suppose for a contradiction that $I(L(G,s,k))$ contains a vertex of degree $3$. Then there are maximal independent sets $I_1, I_2, I_3, I_4$ in $L(G,s,k)$ such that $I_1$ intersects each one of $I_2$, $I_3$, and $I_4$. By Lemma \ref{common intersection}, no three of $I_2$, $I_3$, and $I_4$ have a nonempty common intersection, therefore, $I_4$ intersects $I_1$ in $I_1\setminus (I_2\cup I_3)$. Let $A_1$, $A_2$, $A_3$, and $A_4$ be the minimal corresponding dangerous sets. Then the non-$s$ end of the unique edge in $I_4\cap (I_1\setminus (I_2\cup I_3))$ belongs to $(A_4\cap A_1)\setminus (A_2\cup A_3)$, contradicting Lemma \ref{dangerous set between two other dangerous sets} as $A_1\setminus (A_2\cup A_3)$ should be empty.
\end{par}
\begin{par}
   The dangerous sets corresponding to non-adjacent vertices of $I(L(G,s,k))$ are disjoint by Lemma \ref{minimal dangerous}. Therefore, this collection of dangerous sets corresponding to a component of $I(L(G,s,k))$ will form either a path or a cycle via intersections. Each one of those intersections contains exactly one neighbour of $s$ (but may contain other vertices).\newline 
   In particular, if a component of $I(L(G,s,k))$ is a $3$-cycle with vertices $I_1$, $I_2$, and $I_3$, then by Lemma \ref{cycle of three dangerous sets} the corresponding dangerous sets $A_1$, $A_2$, and $A_3$ satisfy $A_1\cap A_2 \cap A_3=\emptyset$, so they form a $3$-cycle of dangerous sets.
\end{par} \end{proof}

The following corollary proves the same structures for the components of the complement of $L(G,s,k)$ as in \cite[Theorem~3.4]{Jordan1999ConstrainedSplitting}, however here we generalize this to arbitrary parity of $\deg(s)$.

\begin{corollary} \label{component of complement}
Let $G$ be an $(s,k)$-edge-connected graph such that $k\geq 2$ and $\deg(s)\geq 4$. Then every component of the complement of $L(G,s,k)$ is one of, a cycle, a clique, or two cliques with a path between them.
\end{corollary}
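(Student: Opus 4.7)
The plan is to translate the statement about $\overline{L(G,s,k)}$ into one about the independence graph $I(L(G,s,k))$, use Lemma \ref{component of max indep sets} to pin down the possible shapes of its components, and then invoke Lemma \ref{dangerous set between two other dangerous sets}(3) to force the sizes of the ``middle'' maximal independent sets in the path and cycle cases.

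First I would note, using the Remark immediately following the definition of the independence graph, that components of $\overline{L(G,s,k)}$ correspond naturally to components of $I(L(G,s,k))$: if $\mathcal{C}$ is a component of $I(L(G,s,k))$, then the associated component $C$ of $\overline{L(G,s,k)}$ has vertex set $\bigcup_{I\in\mathcal{C}} I$ and edge set equal to the union of the cliques $K(I)$ for $I\in\mathcal{C}$. (Every edge of $\overline{L(G,s,k)}$ is a non-adjacent pair in $L(G,s,k)$ and hence lies in some maximal independent set; once this set meets a member of $\mathcal{C}$, it lies in $\mathcal{C}$.) By Lemma \ref{component of max indep sets}, $\mathcal{C}$ is a singleton, a path $I_1,\ldots,I_n$ with $n\geq 2$, or a cycle $I_1,\ldots,I_n$ with $n\geq 3$.

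Next I would dispatch the three cases. The singleton case $\mathcal{C}=\{I_1\}$ gives $C=K(I_1)$, a clique. In the cycle case, every $I_i$ is the middle vertex of a consecutive triple from $\mathcal{C}$ (a $3$-cycle when $n=3$, a subpath of the cycle when $n\geq 4$), so Lemma \ref{dangerous set between two other dangerous sets}(3) forces $|I_i|=2$ for every $i$. Writing $v_i$ for the unique vertex of $I_i\cap I_{i+1}$ (a single vertex by Theorem \ref{summary of lifting graph facts}(1)), we obtain $I_i=\{v_{i-1},v_i\}$ and hence $C$ is the cycle $v_0v_1\cdots v_{n-1}v_0$ of length $n$. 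In the path case with $n\geq 2$, Lemma \ref{dangerous set between two other dangerous sets}(3) forces $|I_i|=2$ for each interior $i$, so such an $I_i$ contributes only the edge $v_{i-1}v_i$; Lemma \ref{common intersection} then prevents any $v_j$ from lying in $I_1$ for $j\neq 1$ or in $I_n$ for $j\neq n-1$, so the end cliques $K(I_1)$ and $K(I_n)$ meet the joining path $v_1v_2\cdots v_{n-1}$ precisely at $v_1$ and $v_{n-1}$. Thus $C$ is the union of the two end cliques $K(I_1)$ and $K(I_n)$ joined by this path, degenerating in the case $n=2$ to two cliques sharing the single vertex $v_1$.

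The main subtlety is bookkeeping: verifying that no stray edges appear in $C$ beyond those contributed by the cliques $K(I_i)$ for $I_i\in\mathcal{C}$, and that the shared vertices $v_i$ genuinely assemble into a path or a cycle without accidental coincidences. Both reduce cleanly to Theorem \ref{summary of lifting graph facts}(1) (pairwise intersections of maximal independent sets have size at most one) and Lemma \ref{common intersection} (no triple intersections). With these in place, each component of $\overline{L(G,s,k)}$ is a clique, a cycle, or two cliques joined by a path, as required.
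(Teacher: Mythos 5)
Your proposal is correct and takes essentially the same approach as the paper's proof: both reduce to Lemma \ref{component of max indep sets} to identify components of $I(L(G,s,k))$ as paths or cycles, invoke Lemma \ref{dangerous set between two other dangerous sets}(3) to force interior maximal independent sets to have size $2$, and then read off the shape of the corresponding component of $\overline{L(G,s,k)}$. Your version is somewhat more explicit about the correspondence between components of $\overline{L(G,s,k)}$ and components of $I(L(G,s,k))$ and about ruling out stray edges via Theorem \ref{summary of lifting graph facts}(1) and Lemma \ref{common intersection}, but the key ideas and the sequence of lemmas used are the same.
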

\begin{proof}
By Lemma \ref{component of max indep sets} every component of $I(L(G,s,k))$ is either a path (possibly a singleton) or a cycle. The maximal independent sets of $L(G,s,k)$ corresponding to the vertices of the cycle and the interior vertices of the path are of size $2$ each by Lemma \ref{dangerous set between two other dangerous sets}. Thus a cycle component of $I(L(G,s,k))$ gives a cycle component in the complement of $L(G,s,k)$ (because the complement of a maximal independent set of size $2$ consists of one edge). Now consider a path component of $I(L(G,s,k))$. If it is a singleton, then in the complement of $L(G,s,k)$ it gives a clique (the complement of one maximal independent set). If the path component of $I(L(G,s,k))$ contains at least two vertices, then in the complement of $L(G,s,k)$, we have two cliques corresponding to the first an last maximal independent sets (possibly of different sizes) in the path. These two cliques either meet at a vertex (if the path in $I(L(G,s,k))$ consists of one edge), or there is a nontrivial path between the two cliques corresponding to the path of maximal independent sets of size $2$ each. 
\end{proof}

From Lemma \ref{component of max indep sets} we know that each component of $I(L(G,s,k))$ is either a path or a cycle, thus any spanning tree of any component is a path. In the following lemma we show that, other than the edges incident with $s$, there are no edges coming out from the cycle of dangerous sets in $G$ corresponding to a cycle component of $I(L(G,s,k))$, and the edges coming out from a path of dangerous sets only come out from the first and last dangerous sets in the path.

\begin{lemma}\label{edges out from union of dangerous sets}
 Let $G$ be an $(s,k)$-edge-connected graph such that $k\geq 2$ and $\deg(s)\geq 4$. If $A_1, \cdots, A_m$ are, in order, the dangerous sets corresponding to a spanning path of a non-singleton component $C$ of $I(L(G,s,k))$, then 
\begin{enumerate}
\item[(1)] $\delta_{G-s}(A_1 \cup \cdots \cup A_m) = \emptyset$ if $C$ is a cycle, and 

\item[(2)] $\delta_{G-s}(A_1 \cup \cdots \cup A_m) = \delta_{G-s}((A_1\setminus A_2) \cup (A_m\setminus A_{m-1}): \overline{A_1 \cup \cdots \cup A_m})$ if $C$ is a path.
\end{enumerate}
\end{lemma}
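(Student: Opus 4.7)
My plan is to combine two structural facts about consecutive dangerous sets on the spanning path $A_1, \ldots, A_m$, both already established in the preceding lemmas. First, by Lemma \ref{two dangerous}(2)(c), for any consecutive pair $(A_i, A_{i+1})$, $\delta_{G-s}(A_i \cap A_{i+1} : \overline{A_i \cup A_{i+1}}) = \emptyset$. Second, Lemma \ref{dangerous set between two other dangerous sets}(4) applied to any consecutive triple $(A_{i-1}, A_i, A_{i+1})$ gives $A_i \setminus (A_{i-1} \cup A_{i+1}) = \emptyset$; combining this with Lemma \ref{cycle of three dangerous sets}, which gives $A_{i-1} \cap A_i \cap A_{i+1} = \emptyset$, yields the disjoint decomposition $A_i = (A_{i-1} \cap A_i) \cup (A_i \cap A_{i+1})$ for every interior set in the path (and for every $i$, indices modulo $m$, in the cycle case since then $A_m$ is also adjacent to $A_1$).

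The main step is then a short local argument. Pick any edge $uv \in \delta_{G-s}(A_1 \cup \cdots \cup A_m)$ with $u \in A_1 \cup \cdots \cup A_m$ and $v \notin A_1 \cup \cdots \cup A_m$, and locate $u$ in the decomposition above. If $u$ lies in some consecutive intersection $A_i \cap A_{i+1}$, then the first fact forces $v \in A_i \cup A_{i+1} \subseteq A_1 \cup \cdots \cup A_m$, a contradiction. In the cycle case every $u \in A_1 \cup \cdots \cup A_m$ is forced into such a consecutive intersection, so $\delta_{G-s}(A_1 \cup \cdots \cup A_m) = \emptyset$, proving (1). In the path case the only remaining locations for $u$ are the two boundary parts $A_1 \setminus A_2$ and $A_m \setminus A_{m-1}$, which yields the $\subseteq$-inclusion of (2); the reverse inclusion is immediate since $(A_1 \setminus A_2) \cup (A_m \setminus A_{m-1}) \subseteq A_1 \cup \cdots \cup A_m$.

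The bookkeeping I need to verify is that the two auxiliary lemmas really apply to all relevant triples and pairs on $C$. Lemma \ref{dangerous set between two other dangerous sets} is formally stated for paths or $3$-cycles, but its proof only invokes Lemma \ref{two dangerous}(2) on the two adjacent pairs, which requires only $I_i \cup I_{i+1} \neq V(L(G,s,k))$; by parts (2)--(3) of Theorem \ref{summary of lifting graph facts} this is automatic once $C$ has at least three maximal independent sets, so both lemmas extend to every consecutive triple or pair on a component of length at least $3$.

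The only degenerate case is a path component of length $2$, where there is no interior set. Here if $I_1 \cup I_2 = V(L(G,s,k))$ then Lemma \ref{large independent}(1) forces $V(G-s) = A_1 \cup A_2$ and both sides of (2) are empty; otherwise Lemma \ref{two dangerous}(2)(c) applies directly to $(A_1, A_2)$ and the statement reduces to saying that any edge leaving $A_1 \cup A_2$ in $G - s$ must start from $A_1 \setminus A_2$ or $A_2 \setminus A_1$. I expect no serious obstacle; the only care required is this bookkeeping and the verification of the $m = 2$ degenerate case.
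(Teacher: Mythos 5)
Your proof is correct and takes essentially the same route as the paper's: the paper's argument is a one-paragraph ``saturation'' observation that each interior $A_i$ already has all $k+1$ of its boundary edges identified via Lemma \ref{dangerous set between two other dangerous sets}(2) (hence nothing escapes), whereas you achieve the same conclusion by decomposing each interior $A_i$ into the two consecutive intersections via part (4) of that lemma and then applying Lemma \ref{two dangerous}(2)(c) pointwise; both hinge on the identical preceding lemmas. Your version is somewhat more careful: you explicitly verify that Lemma \ref{two dangerous}(2) applies to the consecutive pairs (checking $I_i\cup I_{i+1}\neq V(L(G,s,k))$ via Theorem \ref{summary of lifting graph facts}), and you treat the degenerate $m=2$ path case separately, which the paper's proof silently skips since its argument begins ``if $A_2$ is between $A_1$ and $A_3$.'' One minor note: the disjointness of the decomposition (your appeal to Lemma \ref{cycle of three dangerous sets}) is not actually needed — you only use that every $u\in A_i$ lies in \emph{some} consecutive intersection, which already follows from $A_i\setminus(A_{i-1}\cup A_{i+1})=\emptyset$ alone.
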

\begin{proof}
\begin{par}
  The dangerous sets in a cycle or the interior of a path are saturated. That is we have already identified $k+1$ edges incident with each one of them, as follows. If $A_2$ is between $A_1$ and $A_3$, then there are $(k-1)/2$ edges to $A_1\setminus A_2$, $(k-1)/2$ edges to $A_3\setminus A_2$, one edge from $s$ to $A_2\cap A_1$, and one edge from $s$ to $A_2\cap A_3$. Thus if the component $C$ is a cycle, then the corresponding cycle of dangerous sets in $G$ does not have any edges to vertices outside their union except to $s$. If $C$ is a path, then only the first and last sets $A_1$ and $A_m$ of the corresponding path of dangerous sets in $G$ can have edges to vertices in $\overline{A_1 \cup \cdots \cup A_m}$ different from $s$.
  \end{par} \end{proof}

  The following lemma analyses the situation when all the components of $I(L(G,s,k))$ are cycles except possibly one that is a path. This will be needed in proving the path or cycle structure of $G$ when the complement of $L(G,s,k)$ is connected.

\begin{lemma}\label{vertices outside dangerous sets}
Let $G$ be an $(s,k)$-edge-connected graph such that $k\geq 2$ and $\deg(s)\geq 4$. Let $\mathcal{D}$ be the collection of the unique minimal dangerous sets in $G$ corresponding to the maximal independent sets of $L(G,s,k)$ of size at least $2$, and let $B=V(G)\setminus(\{s\}\cup \bigcup \mathcal{D}))$. We have the following:
\begin{enumerate}
\item[(1)] $B=\emptyset$ in the following cases:
           \begin{itemize}
           \item all the components of $I(L(G,s,k))$ are cycles or,
            \item exactly one component of $I(L(G,s,k))$ is a path such that in case it is a singleton the maximal independent set corresponding to it in $L(G,s,k)$ is not a singleton.
           \end{itemize}
\item[(2)] 
Suppose that exactly one component of $I(L(G,s,k))$ is a path, it is not a singleton, and let $A_1,\cdots,A_m$ be the corresponding path of dangerous sets in $G$. Then,
    \begin{enumerate}
    \item[(a)] $\delta(A_1\cup \cdots \cup A_m) \subseteq \delta(\{s\})$,
    \item[(b)]$\delta(A_1)= \delta(\{s\}: A_1) \cup \delta(A_1\cap A_2 : A_2\setminus A_1) \cup \delta(A_1\setminus A_2: A_m\setminus A_{m-1})$, and
    \item[(c)] $\delta(A_m)= \delta(\{s\}: A_m) \cup \delta(A_m\cap A_{m-1} : A_{m-1}\setminus A_m) \cup \delta(A_m\setminus A_{m-1}: A_1\setminus A_2)$.
    \end{enumerate}
\end{enumerate}
\end{lemma}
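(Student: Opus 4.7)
My plan is to prove both parts by combining Lemma~\ref{edges out from union of dangerous sets} with the $(s,k)$-edge-connectivity hypothesis. For part~(1), the strategy is to bound $|\delta_G(B)|$ so tightly that, were $B$ nonempty, $(s,k)$-edge-connectivity would fail. For part~(2), once $B=\emptyset$ is in hand, the three statements reduce to carefully enumerating the external edges of $A_1$ and $A_m$ via the earlier structural lemmas.

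For part~(1), I first observe that no neighbour of $s$ lies in $B$: any vertex $sv$ of $L(G,s,k)$ belongs to some maximal independent set $I$, and if $|I|=1$ then $sv$ is a dominating vertex of $L(G,s,k)$, forcing $\{sv\}$ to be a singleton component of $I(L(G,s,k))$ corresponding to a singleton maximal independent set, which is excluded under either hypothesis of~(1). Thus $v \in \bigcup\mathcal{D}$, so $v \notin B$. Lemma~\ref{edges out from union of dangerous sets} applied to each cycle component ensures that no edge of $\delta_G(B)$ reaches a cycle, so $\delta_G(B)$ lies within edges to the (at most one) path component. In the cycles-only case, $\delta_G(B)=\emptyset$. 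If the path is a singleton $\{A_1\}$ with $|I_1|\ge 2$, then $|\delta_G(B)|\le|\delta_{G-s}(A_1)|\le(k+1)-|I_1|\le k-1$. In the non-singleton path case $A_1,\dots,A_m$, Lemma~\ref{edges out from union of dangerous sets} localizes outgoing $G-s$ edges to those from $A_1\setminus A_2$ and $A_m\setminus A_{m-1}$; combining $|\delta(A_1)|\le k+1$ with the exact values in Lemma~\ref{two dangerous}(2)(a)--(d) and $|I_1|\ge 2$ gives $|\delta(A_1\setminus A_2:\overline{A_1\cup A_2\cup\{s\}})|\le(k-1)/2$, with the symmetric bound at $A_m$, hence $|\delta_G(B)|\le k-1$. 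In every subcase $B\ne\emptyset$ would yield a cut of size $<k$ separating two non-$s$ vertices, contradicting $(s,k)$-edge-connectivity. The bound $|I_1|\ge 2$ at a non-singleton path endpoint holds because a singleton $I_1=\{e\}$ adjacent to $I_2$ in $I(L(G,s,k))$ would force $e$ simultaneously to be dominating and to lie in $I_2\ne\{e\}$.

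For part~(2), part~(1) gives $V(G)\setminus\{s\}=\bigcup\mathcal{D}$. Any edge in $\delta_{G-s}(A_1\cup\cdots\cup A_m)$ would then reach some cycle component's dangerous set, which Lemma~\ref{edges out from union of dangerous sets} forbids; hence $\delta(A_1\cup\cdots\cup A_m)\subseteq\delta(\{s\})$, establishing~(a). For~(b) I split $A_1=(A_1\cap A_2)\cup(A_1\setminus A_2)$ and enumerate destinations of outgoing edges. Lemma~\ref{two dangerous}(2) restricts the edges in $\delta(A_1)$ with one endpoint in $A_1\cap A_2$ to $\delta(\{s\}:A_1\cap A_2)\cup\delta(A_1\cap A_2:A_2\setminus A_1)$. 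For edges from $A_1\setminus A_2$: Lemma~\ref{two dangerous}(2)(b) kills edges to $A_2\setminus A_1$; the boundary description of each interior $A_j$ in Lemma~\ref{dangerous set between two other dangerous sets}(2) kills edges to $A_j$ for $3\le j\le m-1$; the same lemma applied to $A_{m-1}$ eliminates edges into $A_m\cap A_{m-1}$; and edges to cycle components or to $B$ are excluded by~(a) and part~(1). The only surviving types are $\delta(\{s\}:A_1\setminus A_2)$ and $\delta(A_1\setminus A_2:A_m\setminus A_{m-1})$, which together with the $A_1\cap A_2$ contributions give exactly~(b). Part~(c) follows by reversing the labelling of the path. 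The boundary case $m=2$ with $I_1\cup I_2=V(L(G,s,k))$ (where Lemma~\ref{two dangerous}(2) does not apply) is immediate from Lemma~\ref{large independent}(1), which gives $V(G)=A_1\cup A_2\cup\{s\}$ and so makes~(a)--(c) trivial decompositions.

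The hardest step I anticipate is the arithmetic estimate in part~(1) for the non-singleton path, where one must simultaneously invoke $|\delta(A_1)|\le k+1$, the exact values from Lemma~\ref{two dangerous}(2)(a)--(d), and the lower bound $|I_1|\ge 2$, and then check that any edges from $A_1\setminus A_2$ to $A_m\setminus A_{m-1}$ only tighten the bound on $|\delta(B)|$ rather than inflate it. Once that bound is secured, the remaining arguments reduce to routine bookkeeping of edge destinations using the already-established structural lemmas.
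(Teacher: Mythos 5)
Your proof is correct and follows essentially the same route as the paper's: both reduce part~(1) to a case analysis on the shape of the unique non-cycle component, bound $|\delta(B)|$ via the cut sizes of the path-endpoint dangerous sets $A_1$ and $A_m$ (the arithmetic $|\delta(A_1:B)|\le (k+1)-2-(k-1)/2 = (k-1)/2$ is the same in both), and derive part~(2) by combining $B=\emptyset$ with the boundary descriptions of Lemma~\ref{edges out from union of dangerous sets}, Lemma~\ref{two dangerous}(2), and Lemma~\ref{dangerous set between two other dangerous sets}(2). Your enumeration of forbidden edge destinations in~(2)(b) is somewhat more explicit than the paper's (which states the intermediate claim $\delta(A_1:A_i)=\emptyset$ for $3\le i\le m-1$ rather loosely), and you make explicit the reason no maximal independent set is a singleton and that $|I_1|\ge 2$ at a non-singleton path endpoint — facts the paper uses without spelling them out. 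The only organizational difference is that you defer the boundary case $m=2$, $I_1\cup I_2=V(L(G,s,k))$ (where Lemma~\ref{two dangerous}(2) does not apply) to the end, while the paper dispatches it earlier via Lemma~\ref{large independent}(1); in either case it yields $V(G)=A_1\cup A_2\cup\{s\}$, which handles part~(1) as well as~(2)(a)--(c), so this is a cosmetic difference only.
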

\begin{proof}
\begin{par}
  Suppose for a contradiction that $B\neq \emptyset$, while at most one component of $I(L(G,s,k))$ is a path, and in case such a component exists and is a singleton the maximal independent set corresponding to it in $L(G,s,k)$ is not a singleton. Then $|\delta(B)|\geq k$, and none of the maximal independent sets of $L(G,s,k)$ is a singleton. Thus the neighbours of $s$ are all in $\bigcup \mathcal{D}$ as this is the union of the dangerous sets corresponding to the maximal independent sets of size at least $2$. Therefore, $\delta(B:\{s\})=\emptyset$, i.e. $\delta(B)=\delta(B:\bigcup \mathcal{D})$.
  \end{par}
\begin{par}
  If $C$ is a cycle component of $I(L(G,s,k))$ and $A_1,\cdots,A_n$ is the corresponding cycle of dangerous sets (Lemma \ref{component of max indep sets}), then $\delta_{G-s}(A_1\cup \cdots \cup A_n)=\emptyset$ (Lemma \ref{edges out from union of dangerous sets}). Thus for any such component $\delta(B:A_1\cup \cdots \cup A_n)=\emptyset$. Therefore, if all the components of $I(L(G,s,k))$ are cycles, $\delta(B)=\emptyset$ and we have a contradiction. Hence $B$ must be empty in this case. 
\end{par}
 \begin{par}
    Now let us consider the case when exactly one of the components is a path. Let $C$ be such a component and assume first that it is a singleton. By assumption, the maximal independent set $I$ corresponding to $C$ in $L(G,s,k)$ is not a singleton. Therefore, if $A$ is the dangerous set in $G$ corresponding to $I$, then $|\delta(\{s\}:A)|\geq 2$. Since $A$ is dangerous, we therefore have $|\delta(B:A)|\leq k-1$. Note that $\delta(B)= \delta(B:A)$ since $C$ is the only non-cycle component of $I(L(G,s,k))$. Thus we have a contradiction, so $B$ is empty in that case too.
    
 \end{par}   
\begin{par}
    Suppose that $C$ is a path component that is not a singleton, and let $A_1,\cdots , A_m$ be the corresponding path of dangerous sets (Lemma \ref{component of max indep sets}). Then $m\geq 2$ and $\delta(B)$ is equal to $\delta(B:A_1\cup \cdots \cup A_m)$. Again by Lemma \ref{edges out from union of dangerous sets}, if $m\geq 3$, then $\delta(B:A_i)=\emptyset$ for any $1<i<m$. Thus, for $m\geq 2$, $\delta(B)=\delta(B:A_1 \cup A_m)$. 
  \end{par}
  \begin{par}
  If $I(L(G,s,k))$ is a path of length $1$, i.e. $m=2$ and $L(G,s,k)$ consists of two maximal independent sets intersecting in a vertex, then by part $(1)$ in Lemma \ref{large independent}, $\overline{\{s\}\cup A_1 \cup A_2} = \emptyset$, and points $(a)-(c)$ hold in that case. Thus, we may assume that if $I(L(G,s,k))$ is equal to its unique path component, then it is a path of length at least two. 
   \end{par}
  \begin{par}
  Therefore, in any case, whether or not $I(L(G,s,k))$ is equal to its unique path component, we may assume that $L(G,s,k)$ contains at least three maximal independent sets. By $(2)$ and $(3)$ in Theorem \ref{summary of lifting graph facts}, this means that for any two maximal independent sets in $L(G,s,k)$ with a nonempty intersection, their union is not equal to $V(L(G,s,k))$. Hence part $(2)$ of Lemma \ref{two dangerous} applies here and we have $|\delta(A_{m-1}\cap A_m : A_{m-1}\setminus A_{m})|=\frac{(k-1)}{2}$ and $|\delta(A_2\cap A_1 : A_2\setminus A_1)|=\frac{(k-1)}{2}$. There is one edge from $s$ to $A_{m-1}\cap A_m$, and one edge from $s$ to $A_2\cap A_1$, and there is at least one edge from $s$ to $A_1\setminus A_2$ and at least one edge to $A_{m}\setminus A_{m-1}$. Since $A_1$ and $A_m$ are dangerous, both $|\delta(A_1: B)|$ and $|\delta(A_m: B)|$ are at most $(k+1)-(1+1+\frac{(k-1)}{2})=\frac{(k-1)}{2}$. This means that $|\delta(B)|\leq k-1$, a contradiction. Thus, $B=\emptyset$.
  \end{par}
  \begin{par}
     Recall that by Lemma \ref{edges out from union of dangerous sets}, there are no edges coming out of the cycles of dangerous sets corresponding to the cycle components of $I(L(G,s,k))$, so specifically no edges from them to $A_1\cup A_m$. Therefore, if $m=2$, then $(a)-(c)$ hold. If $m\geq 3$, then by $(2)$ in Lemma \ref{dangerous set between two other dangerous sets}, for any $i\in \{2,\cdots, m-1\}$, $\delta(A_i)= \delta(\{s\}: A_i) \cup \delta(A_i\cap A_{i+1}: A_{i+1}\setminus A_i) \cup \delta(A_i\cap A_{i-1}: A_{i-1}\setminus A_i)$, hence, $\delta(A_1:A_i)=\emptyset$ for any $i\neq 2$ in $\{2,\cdots, m-1\}$, if exists, and $\delta(A_m:A_i)=\emptyset$ for any $i\neq m-1$ in $\{2,\cdots, m-1\}$, if exists. Thus, $(a)-(c)$ hold in that case too.
  \end{par}
\end{proof}

\begin{remark}
\begin{par}
Suppose that exactly one component of $I(L(G,s,k))$ is a path, it is a singleton, and the corresponding maximal independent set in $L(G,s,k)$ is a singleton, and let $sv$ be the edge that this maximal independent set consists of. Then $sv$ is liftable with every other edge incident with $s$, including the edges parallel to it, if any. Thus, if $s$ has a neighbour other than $v$, then $sv$ should have multiplicity at least $k+2$, otherwise either $s$ is incident with a cut-edge or lifting two parallel edges incident with $s$ results in a cut smaller than $k$ with vertices different from $s$ on both sides. In case such a component exists, the set $B=V(G)\setminus(\{s\}\cup \bigcup \mathcal{D}))$ does not have to be empty, but should have at least $k$ edges to $v$ (as it cannot have edges to the cycles of dangerous sets corresponding to the other components). Note that none of the dangerous sets in $\mathcal{D}$ contains $v$ since these dangerous sets correspond only to maximal independent sets of size at least $2$. If more than one component of $I(L(G,s,k))$ is a path, then the first and last sets in the path of dangerous sets corresponding to one component can have edges to the first and last sets in the path of dangerous sets corresponding to another component, as well as edges to $B$, which does not have to be empty in that case because the total number of edges coming to it from the different paths of dangerous sets can be bigger than $k-1$.
\end{par}
\end{remark}

\section{Structure of the lifting graph when its complement is connected}

\begin{par}
In this section we investigate the structure of the lifting graph when its complement is connected, equivalently when $I(L(G,s,k))$ is connected, i.e. the maximal independent sets of the lifting graph form a connected entity via intersctions. One possibility is that the complement of the lifting graph is a Hamilton cycle, hence generalizing what Ok, Richter, and Thomassen proved at $\deg(s)=4$ (Lemma \ref{4 cycle}). The other possibility is that the complement is two cliques of the same size with a path between them (possibly of length $0$ as in the case when $L(G,s,k)$ is an isolated vertex plus a balanced complete bipartite graph). This generalizes an example of Ok, Richter, and Thomassen for $\deg(s)=6$, where the lifting graph was $K_{3,3}$ minus an edge. Illustrated in Fig. 4 of \cite{ORT2016linkages} it can be seen that the maximal independent sets of the lifting graph are of sizes $3$, $2$, and $3$ and they form a path via intersections (the set of size $2$ is in the middle and it is the complement of the missing edge). This means the complement is two cliques of size $3$ with a path of length one (the lost edge of $K_{3,3}$) between them.
\end{par}

\begin{theorem} (Connected complement of the lifting graph) \label{connected complement}
Let $G$ be an $(s,k)$-edge-connected graph such that $k\geq 2$ and $\deg(s)\geq 4$. If $I(L(G,s,k))$ is connected, then: 
 
 \begin{enumerate} 
 \item [(a)]  $I(L(G,s,k))$ is either a cycle or a path; 
   \item [(b)] any vertex of $I(L(G,s,k))$ of degree $2$ corresponds to a maximal independent set of size $2$;

 \item [(c)] if $I(L(G,s,k))$ is a cycle, then the complement of $L(G,s,k)$ is a Hamilton cycle;

 \item [(d)] if $I(L(G,s,k))$ is a path of length at least $2$ or a cycle, then $k$ is odd;

   \item [(e)] if $\mathcal{D}$ is the collection of the unique minimal dangerous set in $G$ corresponding to the maximal independent sets of $L(G,s,k)$, then the union of the dangerous sets in $\mathcal{D}$ is $V(G)\setminus \{s\}$;

\item[(f)] if $I(L(G,s,k))$ is a path of length $1$, then the complement of $L(G,s,k)$ is two cliques of size $(\deg(s)+1)/2$ intersecting in a vertex ($L(G,s,k)$ is an isolated vertex plus a balanced complete bipartite graph);

 \item[(g)] if $I(L(G,s,k))$ is a path of length at least $2$, and $A_1,\ldots, A_m$ is the corresponding path of dangerous sets in $G$, then 
\begin{itemize} 
\item $|\delta(\{s\}:A_1)|=|\delta(\{s\}:A_m)|$; 
\item $|\delta(\{s\}:A_1)|\leq \frac{k+3}{2}$; and 
\item  $|\delta(A_1:A_m)|=|\delta(A_1\setminus A_2 : A_m\setminus A_{m-1})|=\frac{k+3}{2}-l$, where
\\ $l= |\delta(\{s\}:A_1)|=|\delta(\{s\}:A_m)|$;
\end{itemize}

\item [(h)] if $I(L(G,s,k))$ is a path, then the complement of $L(G,s,k)$ is two cliques of the same size, at most $(k+3)/2$, joined by a path.

 \end{enumerate}

\end{theorem}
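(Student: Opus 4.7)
The plan is to extract the global structure of $L(G,s,k)$ by feeding the connectedness hypothesis on $I(L(G,s,k))$ into the lemmas of Section 4. Since $I(L(G,s,k))$ is a single component, Lemma \ref{component of max indep sets} immediately delivers (a), and the corresponding minimal dangerous sets $A_1,\dots,A_m$ arrange themselves in $G$ in the same path or cycle shape via intersections. Part (b) is then immediate from Lemma \ref{dangerous set between two other dangerous sets}(3) applied to any degree-$2$ vertex of $I(L(G,s,k))$. For (e), Lemma \ref{vertices outside dangerous sets}(1) applies once we rule out the singleton exception: if $I(L(G,s,k))$ were a single vertex, then $\delta(\{s\})$ itself would be independent in $L(G,s,k)$, contradicting Theorem \ref{Frank} for $\deg(s)\geq 4$.

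For (d), the hypothesis produces three consecutive maximal independent sets $I_{i-1}, I_i, I_{i+1}$ with $|I_i|=2$ by (b); if $I_{i-1}\cup I_i=V(L(G,s,k))$ then Theorem \ref{summary of lifting graph facts}(2) forces $|I_i|=(\deg(s)+1)/2$, hence $\deg(s)=3$, a contradiction. Thus Lemma \ref{two dangerous}(2) applies to any adjacent pair of vertices of $I(L(G,s,k))$ and delivers $k$ odd. Part (c) then combines (b), (e), and the triple-intersection emptiness of Lemma \ref{cycle of three dangerous sets}: each $I_i$ becomes an edge in the complement, each edge at $s$ belongs to exactly two consecutive $I_i$ (so the cycle length equals $\deg(s)$), and the resulting $2$-regular connected graph on $\deg(s)$ vertices is a Hamilton cycle. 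For (f), a path of length $1$ in $I(L(G,s,k))$ makes the two maximal independent sets cover $V(L(G,s,k))$, so Theorem \ref{summary of lifting graph facts}(2) gives both of size $(\deg(s)+1)/2$ (ruling out $\deg(s)=4$), and Theorem \ref{summary of lifting graph facts}(3) then pins down the isolated-vertex-plus-balanced-complete-bipartite structure.

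The main obstacle is (g). I fix the path $A_1,\dots,A_m$ with $m\geq 3$. Lemma \ref{vertices outside dangerous sets}(2) gives the decomposition $\delta(A_1)=\delta(\{s\}:A_1)\cup\delta(A_1\cap A_2:A_2\setminus A_1)\cup\delta(A_1\setminus A_2:A_m\setminus A_{m-1})$, with middle piece of size $(k-1)/2$ by Lemma \ref{two dangerous}(2)(d). Since $|I_1|\geq 2$ the set $A_1\setminus A_2$ is non-empty, so $(s,k)$-edge-connectivity gives $|\delta(A_1\setminus A_2)|\geq k$; decomposing this cut via Lemma \ref{vertices outside dangerous sets}(2) and again invoking Lemma \ref{two dangerous}(2)(d) for $|\delta(A_1\setminus A_2:A_1\cap A_2)|=(k-1)/2$ yields the lower bound $|\delta(\{s\}:A_1)|+|\delta(A_1:A_m)|\geq (k+3)/2$. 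The dangerous bound $|\delta(A_1)|\leq k+1$ gives the matching upper bound, so this sum equals $(k+3)/2$ exactly and $|\delta(A_1)|=k+1$. Running the symmetric argument at $A_m$, together with $|\delta(A_1:A_m)|=|\delta(A_m:A_1)|$, forces $|\delta(\{s\}:A_1)|=|\delta(\{s\}:A_m)|$; the bound $l\leq (k+3)/2$ and the identity $|\delta(A_1:A_m)|=(k+3)/2-l$ are then automatic.

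Part (h) is a bookkeeping consequence of (b), (g), and Lemma \ref{two dangerous}(1): the two end maximal independent sets become cliques of equal size $l\leq (k+3)/2$ in the complement of $L(G,s,k)$, the $m-2$ interior ones become single edges, and consecutive maximal independent sets share exactly one vertex, so the complement assembles into two equal cliques joined by a path. The trickiest point throughout is keeping precise control of which edges can run between which pieces of $A_1\cup\dots\cup A_m$, and Lemma \ref{vertices outside dangerous sets}(2) is exactly the tool that makes this manageable.
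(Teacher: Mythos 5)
Your proposal is correct and follows essentially the same route as the paper's proof: invoke Lemma \ref{component of max indep sets} for (a), Lemma \ref{dangerous set between two other dangerous sets} for (b), Corollary \ref{component of complement}/direct verification for (c), rule out the $3$-cycle to get a non-covering intersecting pair so that Lemma \ref{two dangerous} yields (d), Lemma \ref{vertices outside dangerous sets} for (e), Theorem \ref{summary of lifting graph facts}(2)(3) for (f), and a count of $|\delta(A_1)|$ via Lemma \ref{vertices outside dangerous sets}(2) for (g). Two small differences are worth flagging. In (g) you obtain $|\delta(A_1)|=k+1$ by a lower/upper squeeze using $|\delta(A_1\setminus A_2)|\ge k$, whereas the paper simply reads $|\delta(A_1)|=|\delta(A_m)|=k+1$ off Lemma \ref{two dangerous}(2)(a); your derivation is correct but does a little extra work. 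In (h), your accounting cites (g), which only governs $m\ge 3$: for the path of length $1$ ($m=2$) you state that (f) gives two cliques of size $(\deg(s)+1)/2$, but the upper bound $(\deg(s)+1)/2\le (k+3)/2$ still needs an argument — the paper closes this with Lemma \ref{large independent}(6), i.e.\ $\deg(s)\le k+2$ in the isolated-vertex-plus-balanced-bipartite case — so you should add that citation.
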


\begin{proof}
\begin{par}
 By Lemma \ref{component of max indep sets} every component of $I(L(G,s,k))$ is either a path or a cycle. Thus, if $I(L(G,s,k))$ is connected, then it is either a path or a cycle. This proves $(a)$. 
 \end{par}
 \begin{par}
 By Lemma \ref{dangerous set between two other dangerous sets}, every maximal independent set in the cycle case has size $2$ and every maximal independent set that is represented by an interior vertex in the path case is also of size $2$. This gives $(b)$. From this, and Corollary \ref{component of complement} it follows that in case $I(L(G,s,k))$ is a cycle, the complement of $L(G,s,k)$ is a Hamilton cycle (thus we have $(c)$), and in case $I(L(G,s,k))$ is a path, then the complement of $L(G,s,k)$ consists of two cliques joined by a path (this gives part of $(h)$).
\end{par}
\begin{par}
Note than since $\deg(s)\geq 4$ and $s$ has exactly one neighbour in the intersection of two intersecting maximal independent sets, $I(L(G,s,k))$ is not a $3$-cycle. Thus, in case $I(L(G,s,k))$ is a path of length at least $2$ or a cycle, there are two maximal independent sets $I_1$ and $I_2$ in $L(G,s,k)$ such that $I_1\cap I_2\neq \emptyset$ and $I_1\cup I_2 \neq V(L(G,s,k))$. By Lemma \ref{two dangerous} it follows that $k$ is odd. This proves $(d)$.
\end{par}
\begin{par}
  The graph $I(L(G,s,k))$ consists of one component by assumption. This component is not a singleton because $L(G,s,k)$ consists of more than one maximal independent set by Frank's theorem \ref{Frank} since $\deg(s)>3$ and $G$ is $(s,k)$-edge-connected. Therefore, by Lemma \ref{vertices outside dangerous sets}, there are no vertices other than $s$ outside the union of the dangerous sets in $\mathcal{D}$. This proves $(e)$.
\end{par}

\begin{par}
  If $I(L(G,s,k))$ is a path, then it is not a singleton as explained above. Therefore, if $A_1,\cdots,A_m$ is the corresponding path of dangerous sets, then $m\geq 2$, and by Lemma \ref{vertices outside dangerous sets} any edges coming out of $A_1$ or $A_m$, other than the edges that connect them to $s$, $A_2$, or $A_{m-1}$, have their other ends in $A_m$ and $A_1$ respectively.
  \end{par}
  \begin{par}
  If $m=2$, then we have only two maximal independent sets intersecting in a vertex. Then by Theorem \ref{summary of lifting graph facts}, $L(G,s,k)$ is an isolated vertex plus a balanced complete bipartite graph, so its complement is two cliques of size $(\deg(s)+1)/2$ intersecting in a vertex. This proves $(f)$. By part $(6)$ in Lemma \ref{large independent}, $\deg(s)\leq k+2$ in that case. Thus the size of the two cliques is at most $(k+3)/2$. This proves $(h)$ in that case.
  \end{par}
  \begin{par}
  Now we suppose that $m\geq 3$ (i.e. $I(L(G,s,k))$ is a path of length at least $2$). Then there two intersecting maximal independent sets in $L(G,s,k)$ whose union is not $V(L(G,s,k))$. Thus by Lemma \ref{two dangerous}, $|\delta(A_1\cap A_2: A_2\setminus A_1)|=|\delta(A_{m-1}\cap A_m:A_{m-1}\setminus A_m)|= (k-1)/2$, and $|\delta_G(A_1)|=|\delta_G(A_m)|=k+1$. 
  \end{par}
  \begin{par}
  Thus, $|\delta(\{s\}:A_1)|= |\delta(\{s\}:A_m)|= (k+1)-(\frac{k-1}{2})-|\delta(A_1:A_m)|=(\frac{k+3}{2})-|\delta(A_1:A_m)|$. If we denote the number $|\delta(\{s\}:A_1)|= |\delta(\{s\}:A_m)|$ by $l$, then $l\leq (\frac{k+3}{2})$ as desired and $|\delta(A_1:A_m)|= (\frac{k+3}{2})-l$. This number, $l$, is the size of the two cliques in the complement of $L(G,s,k)$. This proves $(g)$ and completes the proof of $(h)$.
\end{par}\end{proof}

\begin{corollary}\label{the only disconnected case of lifting graph}
Let $G$ be an $(s,k)$-edge-connected graph such that $k\geq 2$ and $\deg(s)\geq 5$. The only case in which $L(G,s,k)$ is disconnected, is when it is an isolated vertex plus a complete balanced bipartite graph.
 \end{corollary}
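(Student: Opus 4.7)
The plan is to reduce the corollary to Theorem \ref{connected complement}, exploiting that disconnectedness of $L(G,s,k)$ forces its complement to be connected.

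First I would observe that if $L(G,s,k)$ is disconnected, then by Theorem \ref{ORT} it has exactly two components; consequently every pair of vertices taken from distinct components is a non-edge of $L(G,s,k)$, i.e., an edge of $\overline{L(G,s,k)}$, so $\overline{L(G,s,k)}$ is connected. Equivalently, $I(L(G,s,k))$ is connected, and Theorem \ref{connected complement}(a) then yields that $I(L(G,s,k))$ is either a cycle or a path $I_1-\cdots-I_m$.

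Next I would rule out the cycle case and the ``long path'' case $m\geq 3$. If $I(L(G,s,k))$ is a cycle, Theorem \ref{connected complement}(c) gives $\overline{L(G,s,k)}\cong C_{\deg(s)}$; since $\deg(s)\geq 5$ and $\overline{C_n}$ is connected for all $n\geq 5$, $L(G,s,k)$ itself would be connected, a contradiction. If $I(L(G,s,k))$ is a path with $m\geq 3$, Theorem \ref{connected complement}(b) and (h) give $\overline{L(G,s,k)}$ as two vertex-disjoint cliques of common size $l\geq 2$ joined by a path with at least one internal edge. I would then verify directly that $L(G,s,k)$ is connected in this case: by Lemmas \ref{two dangerous}(1) and \ref{common intersection}, no three maximal independent sets share a vertex and any two share at most one, so a vertex in one end-clique and a vertex in the other can fail to be $L$-adjacent only when both sit in the chain of intersections joining them, which excludes at most one vertex from each end-clique. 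This yields a subgraph of $L(G,s,k)$ containing $K_{l-1,l-1}$ (connected since $l\geq 2$), while each intermediate vertex on the joining path in $\overline{L(G,s,k)}$ has degree $2$ there and hence is $L$-adjacent to almost everything; all of $L(G,s,k)$ is therefore connected to this bipartite core, contradicting disconnectedness.

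This leaves $m=2$, where Theorem \ref{connected complement}(f) directly gives the desired conclusion that $L(G,s,k)$ is an isolated vertex plus a balanced complete bipartite graph. The main obstacle is the connectivity check in the $m\geq 3$ subcase: the description of $\overline{L(G,s,k)}$ from (h) is only structural, so one must translate it into explicit $L$-adjacencies, verify that $|I_1|,|I_m|\geq 2$ (else a singleton maximal independent set would have to be properly contained in the adjacent one, contradicting maximality), and confirm that enough non-edges of $\overline{L(G,s,k)}$ survive between the two ends of the path to keep $L(G,s,k)$ connected.
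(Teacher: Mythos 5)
Your proposal follows essentially the same approach as the paper: reduce to the case where $I(L(G,s,k))$ is connected (since $L(G,s,k)$ disconnected forces $\overline{L(G,s,k)}$ connected), invoke Theorem~\ref{connected complement} to get that $I(L(G,s,k))$ is a path or a cycle, and then check case-by-case that $L(G,s,k)$ can only be disconnected when the path has length~$1$. The paper derives the connectedness of $\overline{L(G,s,k)}$ from the general fact that a graph and its complement cannot both be disconnected (via the independence-graph equivalence), whereas you appeal to Theorem~\ref{ORT} to get at most two components --- both are fine. You are in fact a bit more careful than the paper in the path case with $m\geq 3$: the paper asserts that the two end independent sets $I_1$ and $I_m$ induce a \emph{complete} bipartite subgraph of $L(G,s,k)$, which is not quite right when $m=3$ (the two vertices forming $I_2$ lie in $I_1$ and $I_3$ respectively and are non-adjacent), while you correctly observe that at most one vertex in each end-clique can be affected and settle for the connected $K_{l-1,l-1}$ core. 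The one thing you do not explicitly rule out is the singleton path $m=1$: you should note (as the paper does in the proof of Theorem~\ref{connected complement}(e)) that by Frank's Theorem~\ref{Frank} there is more than one maximal independent set when $\deg(s)\geq 4$, so $m\geq 2$ always. With that small addition your argument is complete.
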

 \begin{proof}
 \begin{par}
   If $I(L(G,s,k))$ is disconnected, then the complement of $L(G,s,k)$ is disconnected, so $L(G,s,k)$ is connected in that case. Thus we may assume that $I(L(G,s,k))$ is connected. Then, by Theorem \ref{connected complement}, it is a path or a cycle. If $L(G,s,k)$ is not an isolated vertex plus a complete bipartite graph ($I(L(G,s,k))$ is a path of length $1$), then $I(L(G,s,k))$ is either a path of length at least $2$ or a cycle.   
 \end{par}
 \begin{par}
   Thus the complement of the lifting graph is either a Hamilton cycle or two cliques of the same size with a path between them of length at least $1$.
\end{par}
\begin{par}
  If the complement of $L(G,s,k)$ is a cycle, then $L(G,s,k)$ is connected, since it contains at least $5$ vertices as $\deg(s)\geq 5$ by assumption.
  \end{par}
  \begin{par}
  If the complement is two cliques with a path between them and the path is of length $1$, then $L(G,s,k)$ is in fact a complete bipartite graph minus an edge, which is connected since the two cliques are of the same size which is at least $2$. 
  \end{par}
  \begin{par}
  If the path is of length at least $2$, then first and last maximal independent sets in the path, whose complements are the two cliques, and which have size at least $2$ each, induce a complete bipartite graph in $L(G,s,k)$. Also any vertex outside the two cliques is a neighbour to all the vertices in the two cliques except possibly one vertex from each clique. Thus, $L(G,s,k)$ is connected in that case too, in fact there is a path of length at most $2$ between any two vertices in it.
\end{par}
\end{proof}

\section{Structure of a graph from the structure of its lifting graph}

\begin{par}
The results of the previous sections about maximal independent sets in the lifting graph and their corresponding dangerous sets in $G$ give us the following theorem for the structure of $G$. When the complement of the lifting graph is connected, the graph $G$ has a path or cycle structure of dangerous sets, illustrated in Figures \ref{cycle case} and \ref{path case}. Note that the vertex set of each blob in the figures is not a dangerous set, it is the intersection of two dangerous sets, except for the first and last blobs in Figure \ref{path case}, which have vertex sets $A_1\setminus A_2$ and $A_m\setminus A_{m-1}$.
\end{par}
\begin{par}
If $\deg(s)>4$, the only case in which $L(G,s,k)$ is disconnected is when it is an isolated vertex plus a balanced complete bipartite graph (Lemma \ref{the only disconnected case of lifting graph}). In that case its complement is two cliques of size $(\deg(s)+1)/2$ intersecting in one vertex, and $I(L(G,s,k))$ is a path of length $1$. If $A_1$ and $A_2$ are the two dangerous sets corresponding to the two maximal independent sets of $L(G,s,k)$, then $A_1\setminus A_2$, $A_1\cap A_2$, and $A_2\setminus A_1$ are the vertex sets of the blobs of the $(s,r)$-path structure of $G$, where $r\geq (k-1)/2$.
\end{par}
\begin{par}
When $\deg(s)=4$, there is one case where $L(G,s,k)$ is disconnected. This is the case when $L(G,s,k)$ is a perfect matching ($K_2\cup K_2$) and its complement is a $4$-cycle. By Lemma \ref{4 cycle}, $G$ has an $(s,(k-1)/2)$-cycle structure in that case. See Figure \ref{degree_4_cycle}.
\end{par}
\begin{par}
Other than these two cases, if the complement of $L(G,s,k)$ is connected, then so is $L(G,s,k)$. Thus, in the following theorem we assume that $\deg(s)>4$ and that both $L(G,s,k)$ and its complement are connected. Recall from part $(d)$ in Theorem \ref{connected complement} that this situation happens only when $k$ is odd, and from part $(a)$ that $I(L(G,s,k))$ is either a path or a cycle, and we also know from Lemma \ref{two dangerous} the exact value of $r$ in the path or cycle structure in that case, it is $(k-1)/2$. Note also that in that case if $A_1,\cdots, A_m$ is the path or cycle of dangerous sets corresponding to $I(L(G,s,k))$, then $m\geq 3$, in particular if $I(L(G,s,k))$ is a path, it is a path of length at least $2$.
\end{par}

\begin{figure}[!ht]
  \centering
  \begin{minipage}[b]{0.6\textwidth}
    \includegraphics[width=\textwidth]{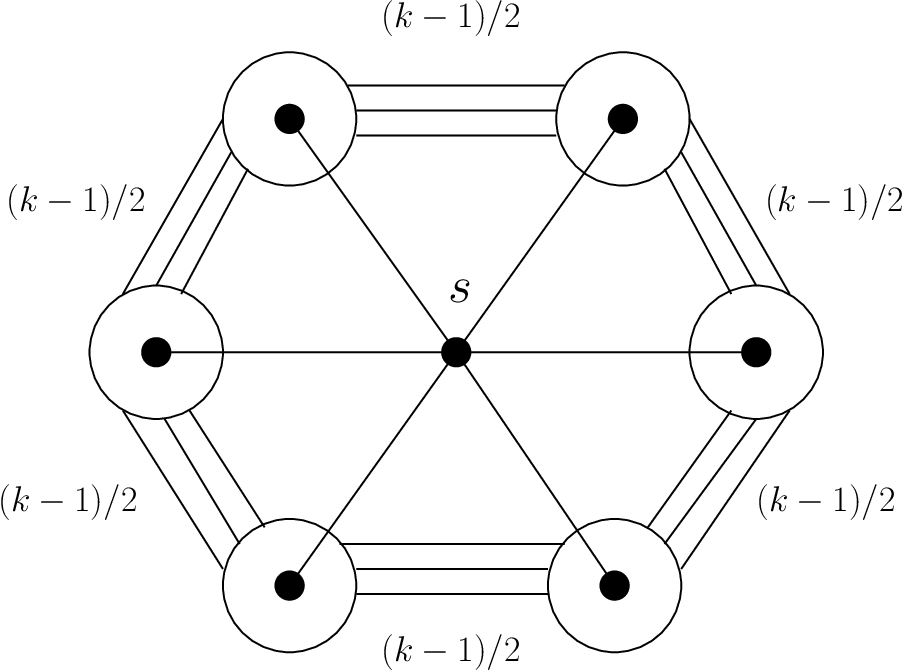}
    \caption{Cycle of intersections of dangerous sets.}
    \label{cycle case}
  \end{minipage}
  \hfill
  \begin{minipage}[b]{0.6\textwidth}
    \includegraphics[width=\textwidth]{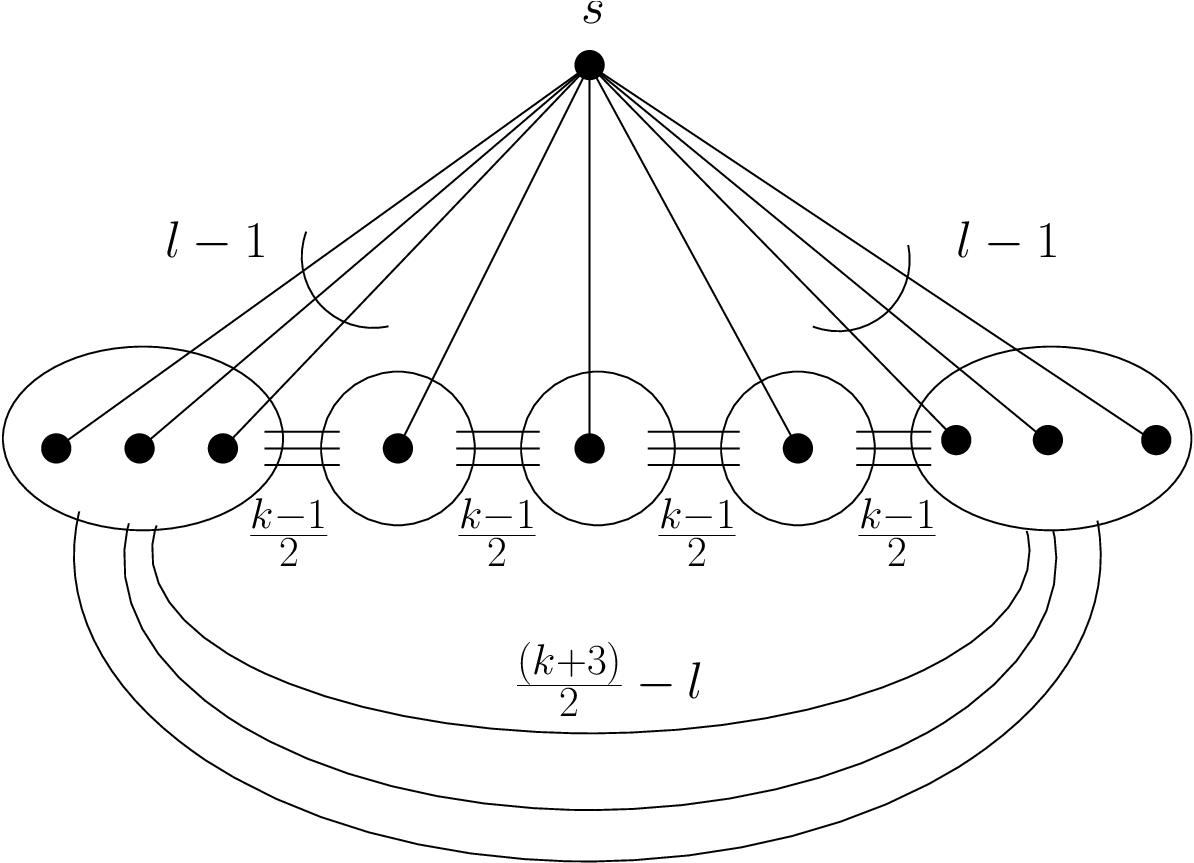}
    \caption{Path of intersections of dangerous sets.}
    \label{path case}
  \end{minipage}
\end{figure}

Point (i) below is comparable to \cite[Theorem~3.6]{Jordan1999ConstrainedSplitting} where it is proved, under the assumption that $\deg(s)$ is even, that the complement of the lifting graph is a cycle if and only if $G$ is \emph{round} (has a cycle structure) with respect to $s$. The parameter of $(k-1)/2$ was also found in \cite[Lemma~2.3~(ii)]{Jordan1999ConstrainedSplitting} for two crossing maximal dangerous sets when $\deg(s)$ is even.

\begin{theorem}\label{cycle or path structure of G}
Let $G$ be an $(s,k)$-edge-connected graph such that $k\geq 2$ and $\deg(s)> 4$. Suppose that $L(G,s,k)$ and its complement are both connected, and let $A_1,\cdots, A_m$ be the path or the cycle of dangerous sets corresponding to $I(L(G,s,k))$. We have the following:
\begin{enumerate}
    \item[(i)] \textbf{Cycle case:} If $I(L(G,s,k))$ is a cycle, then $G$ has an $(s,(k-1)/2)$-cycle structure whose blobs the subgraphs induced by $A_1\cap A_m$ and by $A_i\cap A_{i+1}$, for $i\in \{1,\cdots, m-1\}$.
    
    \item[(ii)] \textbf{Path case:} If $I(L(G,s,k))$ is a path, then $G$ has an $(s,(k-1)/2)$-path structure whose blobs are the subgraphs induced by $A_i\cap A_{i+1}$ for $i\in \{1,\cdots, m-1\}$, $A_1\setminus A_2$, and $A_m\setminus A_{m-1}$.
 \end{enumerate}   

\end{theorem}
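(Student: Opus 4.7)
The plan is to verify the two parts of the definition: that the listed blobs partition $V(G)\setminus\{s\}$, and that each (interior, in the path case) blob $H$ has $|\delta(H)|=2r+1$ split as $1/r/r$ among $s$, the preceding blob, and the following blob, with $r=(k-1)/2$.

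For the partition, I would combine facts already assembled. By Theorem~\ref{connected complement}(e), $V(G)\setminus\{s\}=\bigcup_{i=1}^m A_i$. Every interior $A_i$ (in either case) decomposes as $A_i=(A_{i-1}\cap A_i)\cup(A_i\cap A_{i+1})$ by Lemma~\ref{dangerous set between two other dangerous sets}(4), while in the path case $A_1=(A_1\setminus A_2)\cup(A_1\cap A_2)$ and $A_m=(A_{m-1}\cap A_m)\cup(A_m\setminus A_{m-1})$ tautologically; together these show the proposed blobs cover $V(G)\setminus\{s\}$. Disjointness of two distinct blobs reduces to a short case analysis using (i) Lemma~\ref{minimal dangerous}, which forces non-adjacent $A_i,A_j$ in the path or cycle actually to be disjoint, and (ii) Lemma~\ref{cycle of three dangerous sets} (equivalently Lemma~\ref{common intersection}), which forbids three of the $A_i$'s from sharing a vertex.

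For the edge count of an interior blob $B_i=A_i\cap A_{i+1}$, I would invoke Lemma~\ref{two dangerous}(2): since $I(L(G,s,k))$ has at least three vertices in every case considered here, for every adjacent pair in the path or cycle the two corresponding maximal independent sets satisfy $I_j\cup I_{j+1}\neq V(L(G,s,k))$, so parts (a)--(d) of that lemma apply, together with Lemma~\ref{dangerous set between two other dangerous sets}(1). This yields $|\delta(\{s\}:B_i)|=1$ and, via Lemma~\ref{dangerous set between two other dangerous sets}(5) when both neighbouring blobs are intersection blobs (or via Lemma~\ref{two dangerous}(2)(d) when the neighbour is $A_1\setminus A_2$ or $A_m\setminus A_{m-1}$), $|\delta(B_i:B_{i-1})|=|\delta(B_i:B_{i+1})|=(k-1)/2$. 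To see there are no other edges out of $B_i$, note that internal to $A_i$ the decomposition $A_i=B_{i-1}\cup B_i$ leaves no room, and edges of $A_i$ leaving $A_i$ but not incident with $s$ are confined to $\delta(A_i\cap A_{i+1}:A_{i+1}\setminus A_i)$ by Lemma~\ref{dangerous set between two other dangerous sets}(2), where $A_{i+1}\setminus A_i=B_{i+1}$ by applying the interior decomposition to $A_{i+1}$ (or the end-blob identification when $i+1=m$). Summing gives $|\delta(B_i)|=1+(k-1)/2+(k-1)/2=k=2r+1$, as required; in the cycle case the same argument runs uniformly with indices modulo $m$.

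I expect the main obstacle to be purely organizational: keeping track of which lemma supplies which count for the three locally distinct shapes (interior-of-path, endpoint-of-path, cyclic wrap-around) and making sure not to require anything about the end blobs $B_0=A_1\setminus A_2$ and $B_m=A_m\setminus A_{m-1}$, whose edge structure the definition deliberately leaves unconstrained. In particular, Lemma~\ref{vertices outside dangerous sets}(2)(b) permits edges directly between $B_0$ and $B_m$, but these affect only end blobs and are therefore irrelevant to the definition's requirements.
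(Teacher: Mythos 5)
Your proposal is correct and follows essentially the same route as the paper's proof: it invokes Theorem~\ref{connected complement}(e) for the cover, Lemma~\ref{dangerous set between two other dangerous sets}(4)--(5) for the interior decomposition $A_i=(A_{i-1}\cap A_i)\cup(A_i\cap A_{i+1})$ and the $(k-1)/2$ counts between consecutive intersection blobs, Lemma~\ref{two dangerous}(2) (together with Lemma~\ref{dangerous set between two other dangerous sets}(1)--(2)) to account for the remaining edges out of each blob, and the minimality/disjointness facts to keep the blobs from overlapping. If anything, you are a touch more explicit than the paper about pairwise disjointness of the blobs (via Lemma~\ref{minimal dangerous} and Lemma~\ref{cycle of three dangerous sets}) and about which of the three locally distinct shapes supplies each count, which is a mild improvement in precision but not a different argument.
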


\begin{proof}
\begin{par}
  By part $(e)$ in Theorem \ref{connected complement}, $V(G)= \{s\} \cup A_1\cup \cdots A_m$. By Lemma \ref{two dangerous} there is exactly one neighbour of $s$ in each intersection $A_i\cap A_{i+1}$, for $i\in \{1,\cdots, m-1\}$, and in the cycle case one neighbour in $A_m\cap A_1$. By points 
$(4)$ and $(5)$ of Lemma \ref{dangerous set between two other dangerous sets}, $A_i\setminus (A_{i-1}\cup A_{i+1})= \emptyset$ for each $i\in \{2,\cdots, m-1\}$, and in the cycle case also $A_1\setminus (A_2\cup A_m)=\emptyset$ and $A_m\setminus (A_{m-1}\cup A_1)= \emptyset$. Also there are $(k-1)/2$ edges between $A_i\cap A_{i+1}$ and $A_{i+1}\cap A_{i+2}$, for each $i\in \{1,\cdots, m-2\}$, and in the cycle case also between $A_{m-1}\cap A_m$ and $A_m\cap A_1$, and between $A_m\cap A_1$ and $A_1\cap A_2$. Moreover, by Lemma \ref{two dangerous} the only edges coming out of the intersections are the edges to $s$ and to the the two dangerous sets for which this is the intersection. These are the edges we have identified.
\end{par}\end{proof}

We have a few further remarks on these structures. Recall that the path an cycle structures occur also when $L(G,s,k)$ is an isolated vertex plus a balanced complete bipartite graph, and when $\deg(s)=4$. We therefore assume in the following corollary only that $I(L(G,s,k))$ is connected and $\deg(s)\geq 4$, a small change from the assumptions of the theorem.

\begin{corollary} \label{the blobs are (k+1)/2 connected}
Let $G$ be an $(s,k)$-edge-connected graph such that $k\geq 2$ and $\deg(s)\geq 4$, and assume that $I(L(G,s,k))$ is connected. Then:
\begin{enumerate}
\item [(1)] For every blob $B$ of the path or cycle structure of $G$, $|\delta(B)|=k$, except possibly in the case if $B$ is the middle blob when $L(G,s,k)$ is an isolated vertex plus a balanced complete bipartite graph, where in that case $|\delta(B)|\geq k$.
\item[(2)] For every vertex $v$ in a blob $B$, there are $k$ edge-disjoint paths from $v$ ending with $k$ distinct edges in $\delta(B)$.
\item [(3)] Each blob is $\lceil k/2 \rceil$-edge-connected except possibly for the middle blob when $L(G,s,k)$ is an isolated vertex plus a balanced complete bipartite graph.
\end{enumerate}
\end{corollary}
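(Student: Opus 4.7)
The plan is to prove the three parts in order, leveraging the path or cycle structure of dangerous sets developed in Theorem \ref{cycle or path structure of G} together with the edge counts provided by Lemmas \ref{two dangerous}, \ref{dangerous set between two other dangerous sets}, \ref{large independent}, and \ref{vertices outside dangerous sets}. For $(1)$, I would argue case-by-case on the type of blob. Any interior blob has the form $A_i \cap A_{i+1}$, which contains exactly one neighbour of $s$ by Lemma \ref{two dangerous}, and is joined to each of its two neighbouring blobs by $(k-1)/2$ edges (by Lemma \ref{dangerous set between two other dangerous sets}$(5)$ in the middle of the chain and by Lemma \ref{two dangerous}$(2)(d)$ at the interface with an end blob), so $|\delta(B)| = 1 + 2\cdot(k-1)/2 = k$. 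For an end blob $A_1\setminus A_2$ of a path of length at least $2$, the equality $|\delta_G(A_1\setminus A_2)| = k$ is already recorded inside step $(1)$ of the proof of Lemma \ref{two dangerous}, and $A_m\setminus A_{m-1}$ is symmetric. When $m=2$ (the isolated-vertex-plus-balanced-bipartite case), Lemma \ref{large independent}$(5)$ combined with $|\delta(\{s\}:A_1\setminus A_2)| = (\deg(s)-1)/2$ again forces $|\delta(A_1\setminus A_2)| = k$, while the middle blob $A_1\cap A_2$ only satisfies $|\delta(B)|\geq k$. The remaining $\deg(s)=4$ cycle case is handled directly by Lemma \ref{4 cycle}.

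For $(2)$, I would fix a blob $B$ and a vertex $v \in B$, and form the multigraph $G'$ by contracting $V(G)\setminus B$ to a single vertex $t$ (discarding self-loops). For any $X \subseteq B$ with $v \in X$ one has $|\delta_{G'}(X)| = |\delta_G(X)|$, since the edges incident with $X$ are unchanged by the contraction. If $X \subsetneq B$, then $(s,k)$-edge-connectivity gives $|\delta_G(X)| \geq k$, as $X$ and $V(G)\setminus X$ both contain a non-$s$ vertex; if $X = B$, then $|\delta_{G'}(B)| = |\delta_G(B)| \geq k$ by part $(1)$. Hence the minimum $v$-$t$ cut in $G'$ is at least $k$, and Menger's theorem produces $k$ edge-disjoint $v$-$t$ paths, which correspond, after un-contracting, to $k$ paths in $G$ from $v$ each ending in a distinct edge of $\delta(B)$.

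For $(3)$, I would apply, to any $X$ with $\emptyset \neq X \subsetneq B$, the standard identity
\begin{equation*}
|\delta_G(X)| + |\delta_G(B\setminus X)| \;=\; 2\,|\delta_{G[B]}(X)| + |\delta_G(B)|,
\end{equation*}
which follows by counting edges by their location with respect to the partition $\{X,\, B\setminus X,\, V(G)\setminus B\}$. The $(s,k)$-edge-connectivity of $G$ makes each term on the left-hand side at least $k$, and whenever part $(1)$ yields $|\delta_G(B)| = k$ --- that is, for every blob except possibly the middle one in the isolated-vertex-plus-balanced-bipartite case --- we conclude $2|\delta_{G[B]}(X)| \geq k$, so $|\delta_{G[B]}(X)| \geq \lceil k/2 \rceil$ since this quantity is an integer; Menger's theorem applied inside $G[B]$ then yields $\lceil k/2 \rceil$-edge-connectivity of the blob. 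The main obstacle is the bookkeeping in part $(1)$, since the structure theorem covers several configurations and the value of $|\delta(B)|$ must be verified in each; once those edge counts are in hand, parts $(2)$ and $(3)$ are short applications of Menger's theorem and the two-cut identity.
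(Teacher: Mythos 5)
Your proposal is correct. Part (1) arrives at the same edge counts as the paper, but via the internal computations in the proof of Lemma \ref{two dangerous} and via Lemma \ref{large independent}(5), rather than by citing the packaged statements of Theorem \ref{cycle or path structure of G}(i)--(ii) and Theorem \ref{connected complement}(g), which the paper uses --- these are equivalent since the latter theorems are derived from exactly those computations. Parts (2) and (3) take genuinely different (and slightly cleaner) routes. For (2), you contract $V(G)\setminus B$ to a single vertex $t$ and apply Menger to a $v$-$t$ minimum cut; the paper instead fixes a particular vertex $w\neq s$ outside $B$, takes $k$ edge-disjoint $v$-$w$ paths, and observes that when $|\delta(B)|=k$ each path uses exactly one edge of $\delta(B)$. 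Notice that your contraction argument needs only $|\delta(B)|\geq k$, so it actually covers the middle blob in the isolated-vertex-plus-bipartite case, which the paper's proof silently excludes (it writes ``Let $v$ be a vertex in a blob $B$ that is not the middle blob\dots''); since the stated claim (2) carries no exception, your version is arguably a more faithful proof of what is asserted. For (3), you use the uncrossing identity $|\delta_G(X)|+|\delta_G(B\setminus X)|=2|\delta_{G[B]}(X)|+|\delta_G(B)|$ together with integrality, whereas the paper counts how many of the $k$ edge-disjoint $u$-$v$ paths can leave and re-enter $B$; these give the same bound, and your identity-based argument is the more standard and transparent of the two. One very small nit: the reference to ``step $(1)$ of the proof of Lemma \ref{two dangerous}'' for $|\delta_G(A_1\setminus A_2)|=k$ is to a numbered equality list buried inside that proof rather than to the lemma's conclusions, so a reader has to dig; but the fact is indeed established there, and it can equally be read off Theorem \ref{cycle or path structure of G} plus Theorem \ref{connected complement}(g), as the paper does.
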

\begin{proof}
\begin{par}
There are exactly $k$ edges incident with each blob in the cycle case by Theorem \ref{cycle or path structure of G}, part $(i)$, for $\deg(s)>4$, and by Lemma \ref{4 cycle} for $\deg(s)=4$. In the path case, if $L(G,s,k)$ is connected, then by Theorem \ref{cycle or path structure of G}, part $(ii)$, there are exactly $k$ edges incident with each blob other than the first an the last ones, and there are $(k-1)/2$ edges from the last blob to the one before it, and $(k-1)/2$ edges from the first blob to the one after it. For each of the first and the last blobs, Theorem \ref{connected complement}, part $(g)$, gives us another $(k+1)/2$ edges ($\frac{(k+3)}{2}-l+l-1$ where the subtracted $1$ is for the edge between $s$ and either the second blob or the one before the last). Thus, also each of the first and last blobs have $k$ edges incident with them in the path case when $L(G,s,k)$ is connected. 
\end{par}
\begin{par}
Now it remains to consider the case when $L(G,s,k)$ is an isolated vertex plus a balanced complete bipartite graph. By parts $(3)$ and $(4)$ of Lemma \ref{large independent}, and since the middle blob contains exactly one neighbour of $s$, it follows that with each of the first and last blobs ($A_1\setminus A_2$ and $A_2\setminus A_1$) there are $k$ incident edges, and at least $k$ edges incident with the middle blob (which is the intersection $A_1\cap A_2$).
\end{par} 
\begin{par}
Let $v$ be a vertex in a blob $B$ that is not the middle blob in the case when $L(G,s,k)$ is an isolated vertex plus a balanced complete bipartite graph. Let $w\neq s$ be any vertex outside the blob. Since $G$ is $(s,k)$-edge-connected, there are $k$ edge-disjoint paths between $v$ and $w$. Each one of those paths contains at least one edge of $\delta(B)$ since $w$ is outside $B$. If $|\delta(B)|=k$, then each one of the paths has exactly one edge of $\delta(B)$.
\end{par}
\begin{par}
Consider any two vertices in a blob $B$ such that $|\delta(B)|=k$. There are $k$ edge-disjoint paths between them and at most $\lfloor k/2 \rfloor$ of those paths can go out and back into $B$ since $|\delta(B)|=k$. Thus there are at least $\lceil k/2 \rceil$ edge-disjoint paths between them inside $B$, so $B$ is $\lceil k/2 \rceil$-edge-connected.
\end{par}\end{proof}

\begin{figure}[!h]
  \centering
  \begin{minipage}[b]{0.5\textwidth}
    \includegraphics[width=\textwidth]{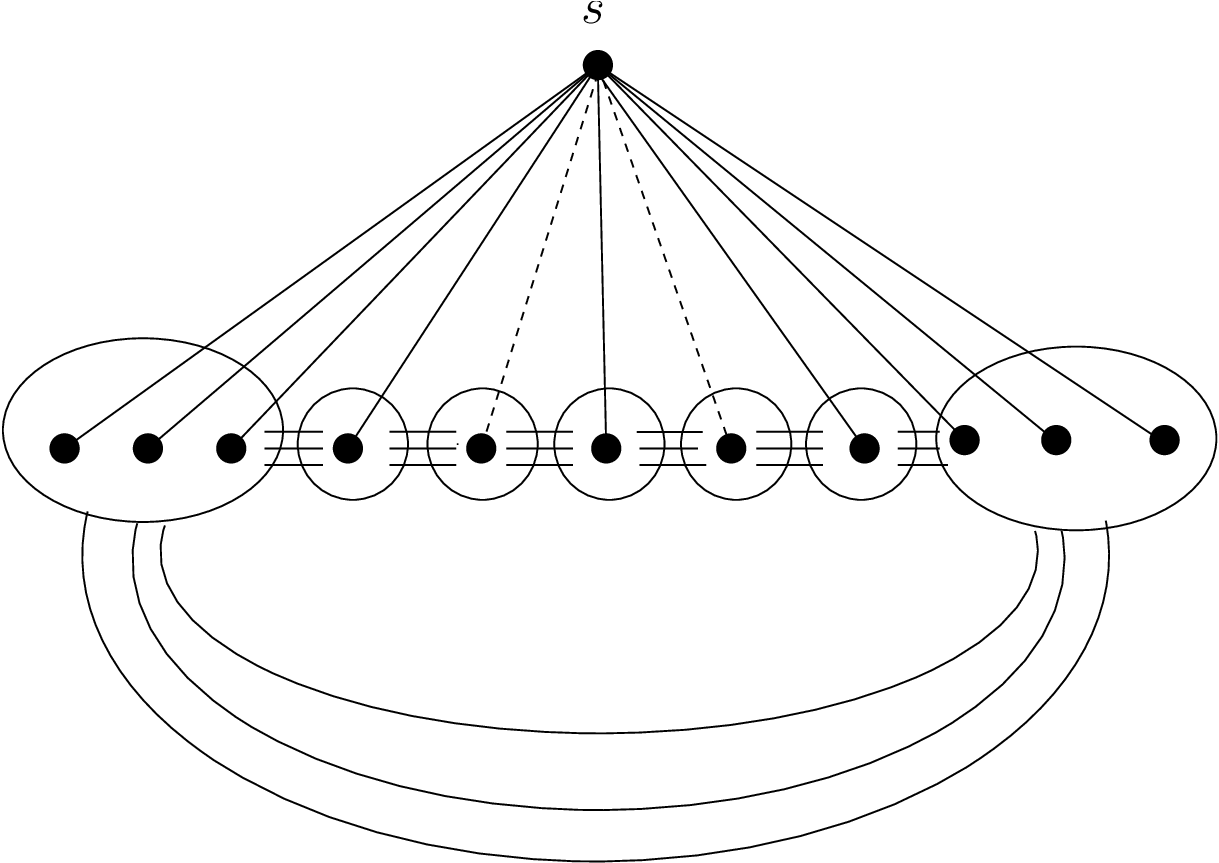}
  \end{minipage}
  \hfill
  
 \vspace{0.1in}
  
  \begin{minipage}[b]{0.5\textwidth}
    \includegraphics[width=\textwidth]{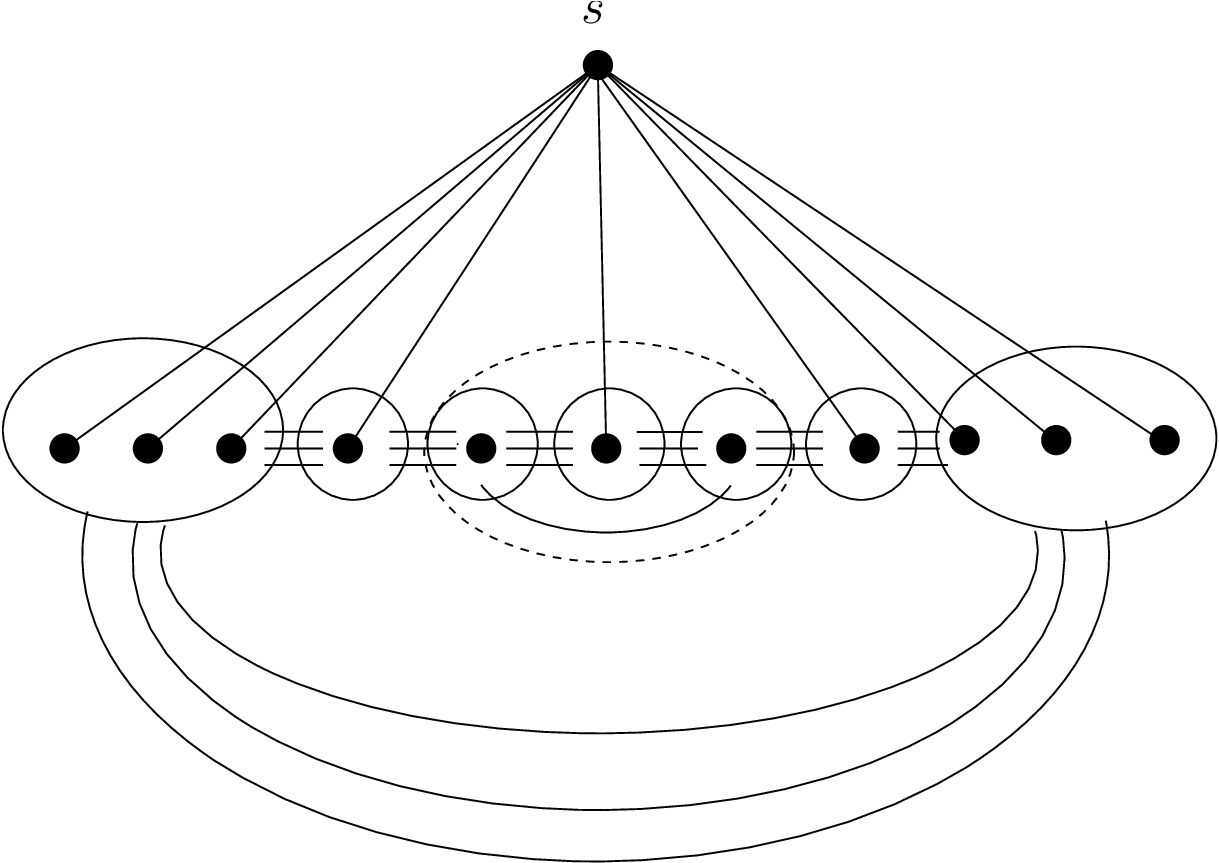}
    \end{minipage}

\vspace{0.1in}
  
  \begin{minipage}[b]{0.5\textwidth}
    \includegraphics[width=\textwidth]{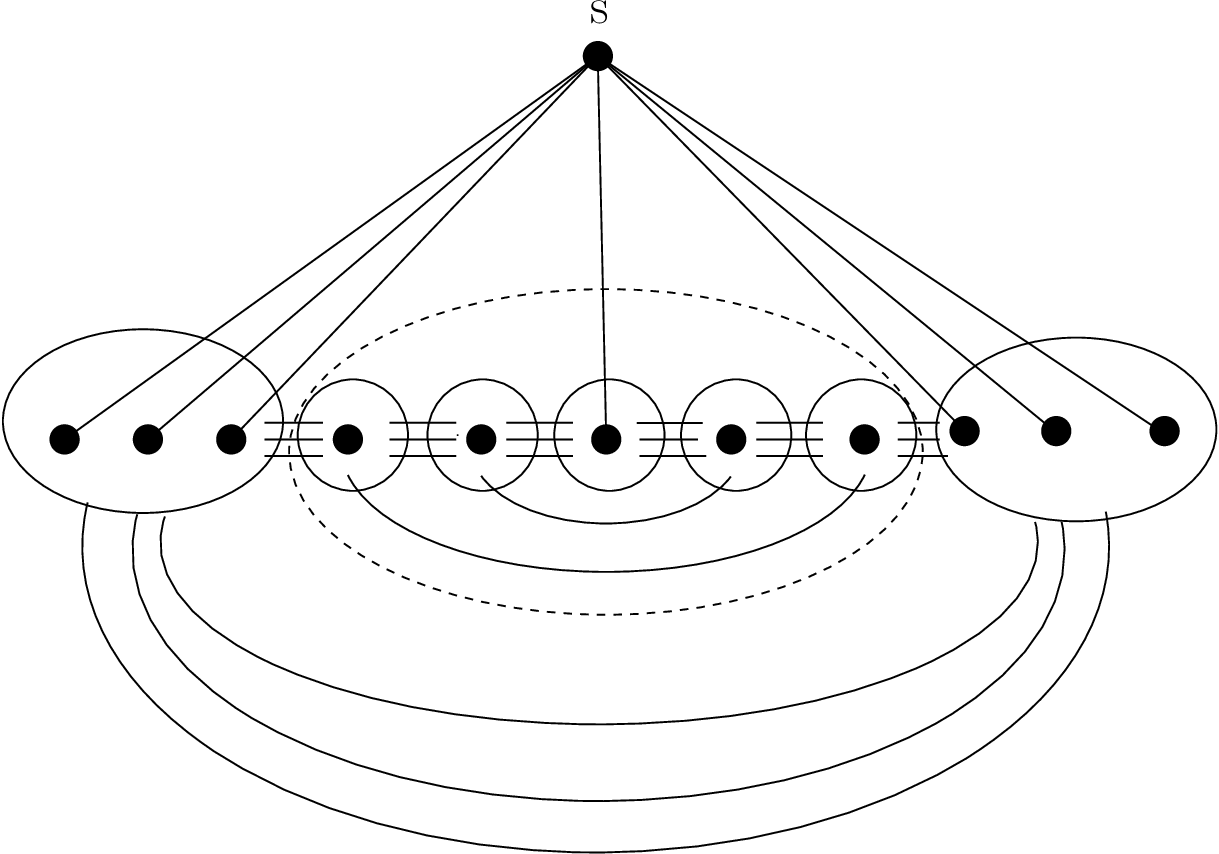}
    \end{minipage}
    \caption{Successively lifting pairs of edges until we reach a lifting graph that is an isolated vertex plus a complete bipartite graph.}
   \label{Figure of successive lifting in the path case}
\end{figure}

\begin{remark}\label{successive lifting in the path case}
Suppose that $\deg(s)$ is odd and $I(L(G,s,k))$ is a path that is not a singleton. Let $B_1,\cdots, B_n$ be in order the blobs of the corresponding path structure in $G$, and for each $i\in \{2,\cdots, n-1\}$ let $s_i$ be the unique neighbour of $s$ in $B_i$. The blob $B_{(n+1)/2}$ is the middle blob, and $B_{(n-1)/2}$ and $B_{(n+3)/2}$ are the ones before and after it respectively.
Successively lifting pairs of edges from the inside out brings us, at some step, to the isolated vertex plus balanced complete bipartite case, that is lifting in the following order $(ss_{(n-1)/2 }, ss_{(n+3)/2 }), (ss_{(n-3)/2 }, ss_{(n+5)/2 }) \ldots$ until only the middle, first, and last blobs are left. This is illustrated in Figure \ref{Figure of successive lifting in the path case}. Throughout, the path structure is preserved until a graph is reached whose lifting graph at $s$ is an isolated vertex plus $K_{(l-1),(l-1)}$, where $l$ is the number of edges between $s$ and each of the first and last dangerous sets in the path structure (so $l-1$ edges to each of the first and last blobs).
\end{remark}

\begin{remark}\label{lifting in the isolated vertex plus bipartite case}
  Lifting a pair of edges from two different sides in case the lifting graph is an isolated vertex plus a balanced complete bipartite graph, results in a graph whose lifting graph is also an isolated vertex - the same vertex - plus a balanced complete bipartite graph. Also when the lifting graph is complete multipartite, its multipartite structure is kept through lifting. This can be seen by looking at the dangerous sets corresponding to the maximal independent sets of the lifting graph, which are pairwise disjoint in this case, and for each one, $A$, of them, the size of the cut $\delta(A)$ stays the same after lifting.
\end{remark}

\begin{remark}\label{lifting in the cycle case}
  Suppose that $\deg(s)\geq 4$ and that $I(L(G,s,k))$ is a cycle. Let $B_0, \cdots, B_{n-1}$ be the blobs of the corresponding $(s,(k-1)/2)$-cycle structure in $G$. Lifting a pair of edges incident with $s$ that have their non-$s$ ends in $B_{i-1}$ and $B_{i+1}$ for some $i$ (indices modulo $n$) gives an $(s, (k-1)/2)$-cycle structure again on a cycle of length less by $2$. The union of $B_{i-1}$, $B_i$, and $B_{i+1}$ gives one new blob. All the other blobs remain the same.
\end{remark}

\section{Degree Three}

Degree three needs a special treatment as it is possible that no pair of edges incident with $s$ is liftable when $\deg(s)=3$. Here we show that if this is the case, then $G$ has a specific structure. This is presented in the following proposition. Part $(1)$ is the case when $\deg(s)=3$ and $I(L(G,s,k))$ is a cycle. Figure \ref{deg(s)=3} shows examples for the situation described in $(2)$.

\begin{figure}
\centering
   \includegraphics[scale=0.8]{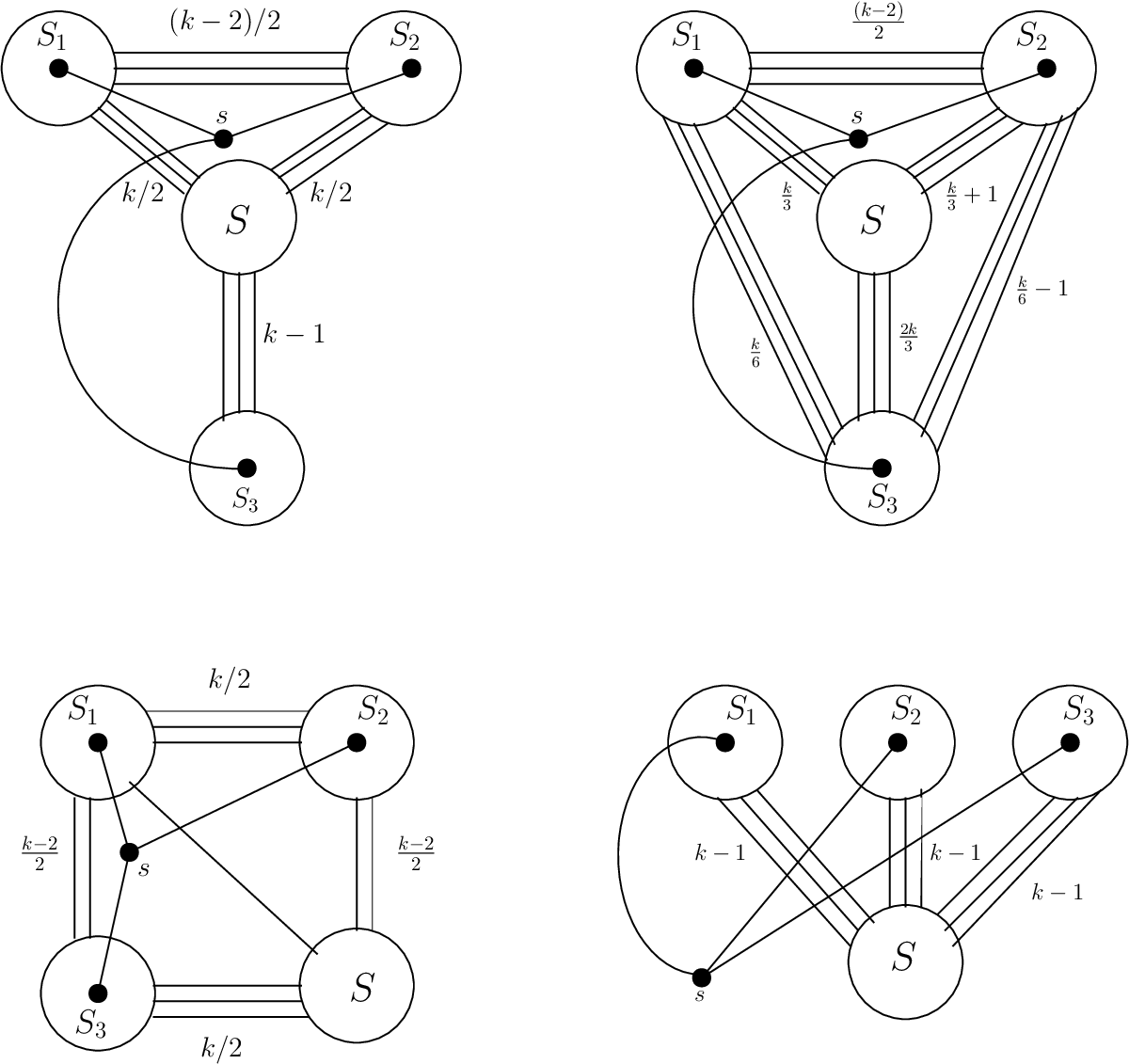} 
   \caption{Examples of graphs with $deg(s)=3$, that do not have an $(s, (k-1)/2)$-cycle structure, where no pair of edges incident with $s$ is liftable.}
    \label{deg(s)=3}
\end{figure}

\begin{proposition}\label{structure of degree three}
 Let $G$ be an $(s,k)$-edge-connected graph such that $k\geq 2$ and $\deg(s)=3$. If no pair of edges incident with $s$ is liftable, then $s$ has three distinct neighbours (no parallel edges) $s_1$, $s_2$, $s_3$, and there are pairwise disjoint sets of vertices $S_1$, $S_2$, and $S_3$ in $G-s$ such that for each $i\in \{1,2,3\}$, $S_i\cap \{s_1,s_2,s_3\}=\{s_i\}$, and

\begin{enumerate}

\item[(1)] either $G$ has an $(s, (k-1)/2)$-cycle structure with blobs $S_1$, $S_2$, and $S_3$, or

\item[(2)] $S_1\cup S_2\cup S_3\cup\{s\}\ne V(G)$ and for each $i\in \{1,2,3\}$, $|\delta_G(S_i)|=k$. 

\end{enumerate}

In the first case, for every distinct $i$ and $j$ in $\{1,2,3\}$, the dangerous set corresponding to $\{ss_i, ss_j\}$ is $S_i \cup S_j$, and in the second case it is $S_i \cup S \cup S_j$, where $S=V(G)\setminus (S_1\cup S_2\cup S_3\cup\{s\})$. Moreover, in the second case, the subgraphs induced by the intersections of the dangerous sets, $S_1\cup S$, $S_2\cup S$, $S_3\cup S$, each contain a cut of size at most $k-1$, namely, the set of edges between $S$ and $S_i$ for $i\in\{1,2,3\}$. See Figure \ref{deg(s)=3}.
\end{proposition}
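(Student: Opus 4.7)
The plan is to attach a dangerous set to each pair of edges at $s$ and use submodularity to pin down how the three dangerous sets interact. First I would show that $s$ has three distinct neighbours: if $s$ has at most two distinct neighbours, then some non-liftable pair of edges $\{e_i,e_j\}$ produces via Lemma \ref{dangerous set for degree three} a dangerous set containing every neighbour of $s$, contradicting Lemma \ref{no dangerous set with all three neighbours}. Call the three neighbours $s_1,s_2,s_3$. Lemma \ref{dangerous set for degree three} combined with Lemma \ref{unique minimal dangerous} supplies, for each pair $\{i,j\}$, the unique minimal dangerous set $A_{ij}$ containing $\{s_i,s_j\}$; by Lemma \ref{no dangerous set with all three neighbours}, $s_k\notin A_{ij}$ whenever $\{i,j,k\}=\{1,2,3\}$.

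Next I would pin down $|\delta(A_{ij})|$ exactly. Since $V(G)\setminus(A_{ij}\cup\{s\})$ contains the non-$s$ vertex $s_k$, its cut has size at least $k$ by $(s,k)$-edge-connectivity; but this cut equals $1+|\delta_{G-s}(A_{ij})|$ (the $1$ being the edge $ss_k$), forcing $|\delta_{G-s}(A_{ij})|\ge k-1$ and hence $|\delta(A_{ij})|=2+|\delta_{G-s}(A_{ij})|\ge k+1$. Together with the dangerous-set upper bound this gives $|\delta(A_{ij})|=k+1$ exactly.

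The main obstacle is to show that $V(G)=\{s\}\cup A_{ij}\cup A_{i'j'}$ for any two of the three dangerous sets, which I would obtain via submodularity: $|\delta(A_{12})|+|\delta(A_{13})|\ge|\delta(A_{12}\cup A_{13})|+|\delta(A_{12}\cap A_{13})|$. The intersection $A_{12}\cap A_{13}$ contains $s_1$ but not $s_2$, so $(s,k)$-edge-connectivity gives $|\delta(A_{12}\cap A_{13})|\ge k$. If $W:=V(G)\setminus(\{s\}\cup A_{12}\cup A_{13})$ were non-empty, then since $W$ contains no neighbour of $s$ we would have $|\delta(W)|\ge k$, and $|\delta(A_{12}\cup A_{13})|=3+|\delta(W)|\ge k+3$ (the $3$ counting the edges of $\delta(s)$); substituting $(k+1)+(k+1)$ on the left forces $|\delta(A_{12}\cap A_{13})|\le k-1$, a contradiction. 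Hence $W=\emptyset$, and by symmetry the same covering holds for any two of the three pairs.

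With this, I set $S:=A_{12}\cap A_{13}\cap A_{23}$ and $S_i:=(A_{ij}\cap A_{ik})\setminus A_{jk}$ for $\{i,j,k\}=\{1,2,3\}$; the coverings force $\{s\},S,S_1,S_2,S_3$ to partition $V(G)$, with $s_i\in S_i$ and $A_{ij}=S\cup S_i\cup S_j$. Writing $d,e,f$ for the edge counts between pairs of the $S_i$'s and, when $S\ne\emptyset$, $a,b,c$ for the edge counts from $S$ to $S_1,S_2,S_3$ respectively, the three equations $|\delta(A_{ij})|=k+1$ become $c+e+f=b+d+f=a+d+e=k-1$, which immediately yield $|\delta(S_i)|=1+(k-1)=k$. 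Splitting on $S$: if $S=\emptyset$ the equations force $d=e=f=(k-1)/2$ (so $k$ is odd) and the $(s,(k-1)/2)$-cycle structure of case (1); if $S\ne\emptyset$, the bound $a\le k-1$ (and its analogues for $b,c$) follows from $a+d+e=k-1$ and $d,e\ge 0$, giving the cut of size at most $k-1$ inside the subgraph induced by $S\cup S_i$, which is case (2). The uniqueness part of Lemma \ref{unique minimal dangerous} identifies the minimal $A_{ij}$ with the stated expression.
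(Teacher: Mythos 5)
Your proof is correct and reaches all the claimed conclusions, but it does so by a genuinely different route from the paper's. The paper gets the two key facts --- that $V(G)\setminus\{s\}=A_{i,l}\cup A_{j,l}$ for each pair of dangerous sets, and that $|\delta(A_{j,l}\setminus A_{i,l}:A_{i,l}\setminus A_{j,l})|\le (k-1)/2$ --- by invoking Lemma~\ref{large independent}, explicitly remarking that the lemma remains valid at $\deg(s)=3$. You instead first nail down $|\delta(A_{ij})|=k+1$ exactly by noting that $\delta(\overline{A_{ij}\cup\{s\}})$ separates $s_k$ from $s_i$ and so has size at least $k$, and then derive the covering $V(G)=\{s\}\cup A_{ij}\cup A_{i'j'}$ from scratch via a single submodularity inequality. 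This is more self-contained and avoids the need to certify Lemma~\ref{large independent} for $\deg(s)=3$; what it gives up is the reuse of already-proved machinery, and it requires you to redo the Venn-region bookkeeping (your $S_i:=(A_{ij}\cap A_{ik})\setminus A_{jk}$ coincides with the paper's $S_i$ once the coverings force the three singleton regions to vanish). The explicit system $c+e+f=b+d+f=a+d+e=k-1$ that you set up is a slicker way to read off $|\delta(S_i)|=k$, the cycle case $d=e=f=(k-1)/2$ (forcing $k$ odd), and the bound $|\delta(S:S_i)|\le k-1$ in one stroke than the paper's sentence-by-sentence derivation.

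One small point of presentation: the inference ``since $W$ contains no neighbour of $s$ we would have $|\delta(W)|\ge k$'' is worded loosely. What actually gives $|\delta(W)|\ge k$ is that $W$ is a non-empty subset of $V(G)\setminus\{s\}$ whose complement also contains a non-$s$ vertex (e.g.\ $s_1$), so $\delta(W)$ separates two non-$s$ vertices and $(s,k)$-edge-connectivity applies. The fact that $W$ contains no neighbour of $s$ is the separate observation you need to conclude $|\delta(A_{12}\cup A_{13})|=3+|\delta(W)|$; it would be clearer to make the two roles of that fact explicit. The argument itself is fine.
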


\begin{proof}
\begin{par}
If all edges incident with $s$ are parallel, then clearly every pair of them is liftable, because in this case any path between two vertices different from $s$ does not need to go through $s$. Assume now that $s$ has exactly two neigbours, $v$ and $w$, where $sv$ has multiplicity $2$. Assume also that no pair of edges incident with $s$ is liftable. By Lemma \ref{dangerous set for degree three} there is a dangerous set $A$ containing $v$ and $w$ since one of the edges $sv$ is not liftable with $sw$, but then $A$ contains all the neighbours of $s$, contradicting Lemma \ref{no dangerous set with all three neighbours}. Thus the neighbours of $s$ are distinct if no pair of edges is liftable. Assume that this is the case and that $s_1$, $s_2$, and $s_3$ are the neighbours of $s$.
\end{par}
\begin{par}
Again by Lemmas \ref{dangerous set for degree three} and \ref{no dangerous set with all three neighbours}, for distinct $i,j\in \{1,2,3\}$ there is a dangerous set $A_{i,j}=A_{j,i}$ such that $A_{i,j}$ contains $s_i$ and $s_j$ but not $s_l$ for $l\in \{1,2,3\}\setminus \{i,j\}$. 
\end{par}
\begin{par}
  By Lemma \ref{large independent}, noting that it is valid for $\deg(s)=3$, we have the following for distinct $i,j,l\in\{1,2,3\}$,
  \begin{enumerate}
  \item[(a)] there are no vertices outside $A_{i,l}\cup A_{j,l}$ other than $s$; and
  \item[(b)]$|\delta(A_{j,l}\setminus A_{i,l}: A_{i,l}\setminus A_{j,l})|\leq (k-1)/2$. 
 \end{enumerate}
\end{par}
\begin{par}
  Define $S_1=A_{1,3}\setminus A_{2,3}$, $S_2=A_{1,2}\setminus A_{1,3}$, and $S_3=A_{2,3}\setminus A_{1,2}$. Then $S_1$, $S_2$, and $S_3$ are pairwise disjoint, and the only neighbour of $s$ contained in $S_i$ is $s_i$, for $1\leq i \leq 3$. By (a), $S_1\subseteq A_{2,3}\cup A_{1,2}$, but since $S_1$ is in $\bar{A}_{2,3}$, then $S_1\subseteq A_{1,2}\setminus A_{2,3}$. This by $(b)$ implies that $|\delta(S_1:S_3)|\leq (k-1)/2$. Similarly, $S_2\subseteq A_{2,3}\setminus A_{1,3}$, so $|\delta(S_1:S_2)|\leq (k-1)/2$, and $S_3\subseteq A_{1,3}\setminus A_{1,2}$, so $|\delta(S_2:S_3)|\leq (k-1)/2$. If $S_1\cup S_2\cup S_3\cup\{s\}=V(G)$, then each inequality has to be an equality since $G$ is $(s,k)$-edge-connected.
\end{par}
\begin{par}
  If $S_1\cup S_2\cup S_3\cup\{s\}\neq V(G)$, then let $S=V(G)\setminus (S_1\cup S_2\cup S_3\cup\{s\})$. Note that 
$S$ is also equal to $A_{2,3} \cap A_{1,3}\cap A_{1,2}$. Thus, $S\subseteq (A_{i,l}\cap A_{j,l})$ for every distinct $i,j,l\in\{1,2,3\}$. By their definitions, $S_1$ is disjoint from $A_{2,3}$, $S_2$ is disjoint from $A_{1,3}$, and $S_3$ is disjoint from $A_{1,2}$. From the previous paragraph we have that $A_{1,2}$ contains $S_2$ and $S_1$. It also contains $S=A_{2,3} \cap A_{1,3}\cap A_{1,2}$, and since it is disjoint from $S_3$, it follows that $A_{1,2}=S_1\cup S \cup S_2$. Similarly, $A_{1,3}=S_1\cup S \cup S_3$, and $A_{2,3}= S_2 \cup S \cup S_3$. Thus, $A_{1,2} \cap A_{1,3}= S_1 \cup S$, $A_{1,3}\cap A_{2,3}= S_3 \cup S$, and $A_{1,2}\cap A_{2,3}= S_2 \cup S$. See Figure \ref{deg(s)=3}.
 \end{par}
 \begin{par}
   Thus, for $\{i,j,l\}=\{1,2,3\}$, $|\delta(A_{i,j})|= 2+|\delta(S_i:S_l)|+|\delta(S:S_l)|+|\delta(S_j:S_l)|=2+|\delta_{G-s}(S_l)|$, so $|\delta_{G-s}(S_l)|\leq k-1$ since $A_{i,j}$ is dangerous. Recall that $S_l$ contains exactly one neighbour of $s$, and that $G$ is $(s,k)$-edge-connected. Therefore $\delta(S_l)=k$ as desired. Note that $|\delta(S:S_l)|\leq k-1$ for every $l\in \{1,2,3\}$.
 \end{par}\end{proof}

\begin{figure}[!h]
  \centering
  \begin{minipage}[b]{0.45\textwidth}
    \includegraphics[width=\textwidth]{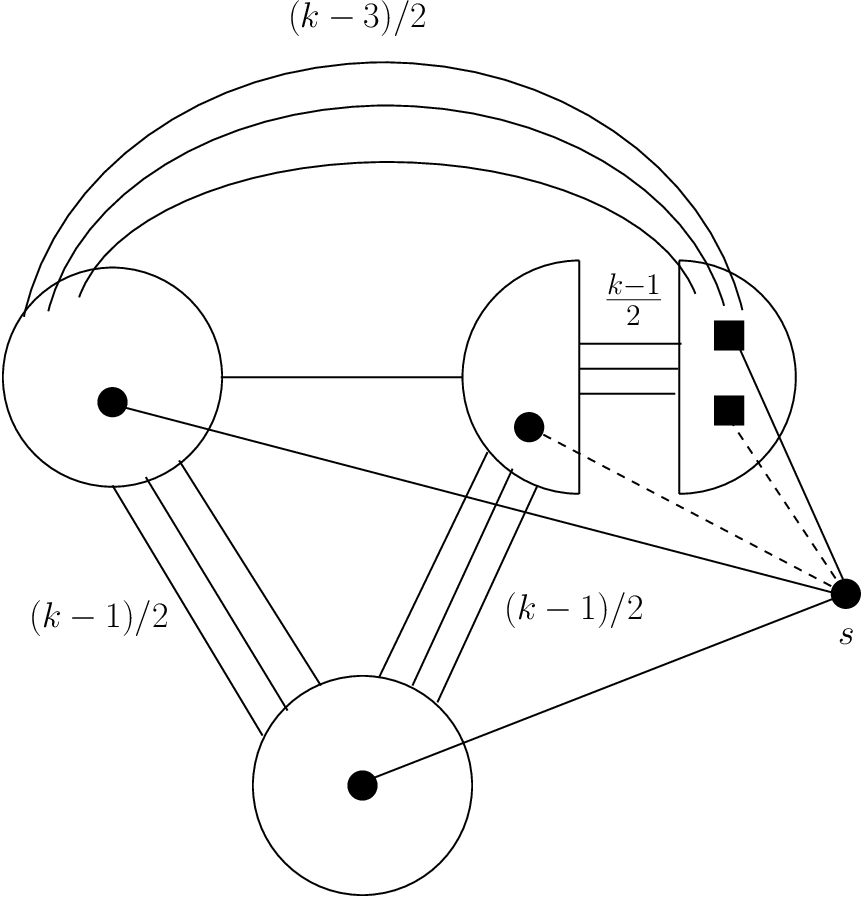}
  \end{minipage}
  \hfill
  \begin{minipage}[b]{0.45\textwidth}
    \includegraphics[width=\textwidth]{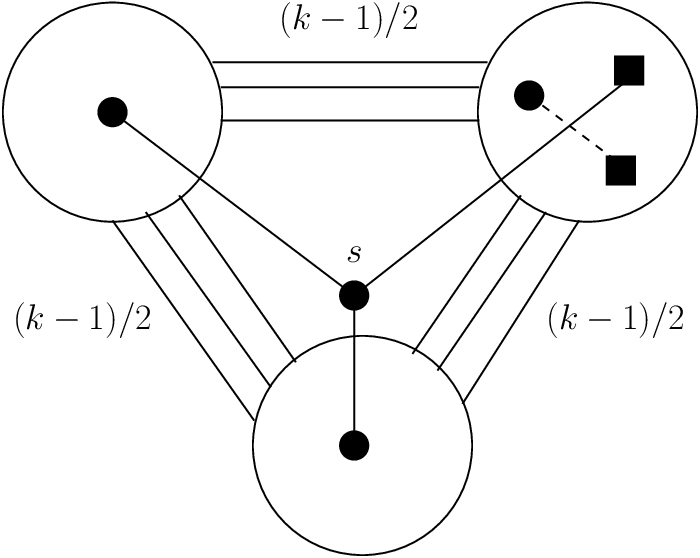}
    \end{minipage}
 \caption{A graph with $deg(s)=5$ whose lifting graph is $K_{2,3}$ such that when the dashed pair of edges is lifted no remaining pair of edges incident with $s$ is liftable.}
    \label{example 1 lifting from deg 5 to deg 3}
\end{figure}

\begin{figure}[!h]
  \centering
  \begin{minipage}[b]{0.45\textwidth}
    \includegraphics[width=\textwidth]{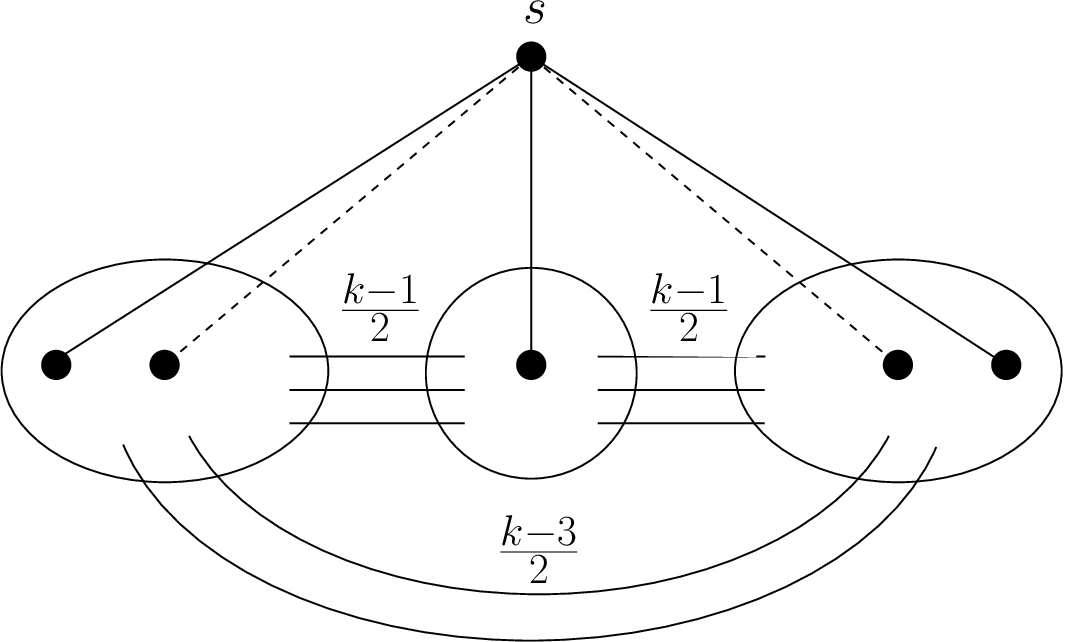}
  \end{minipage}
  \hfill
  \begin{minipage}[b]{0.45\textwidth}
    \includegraphics[width=\textwidth]{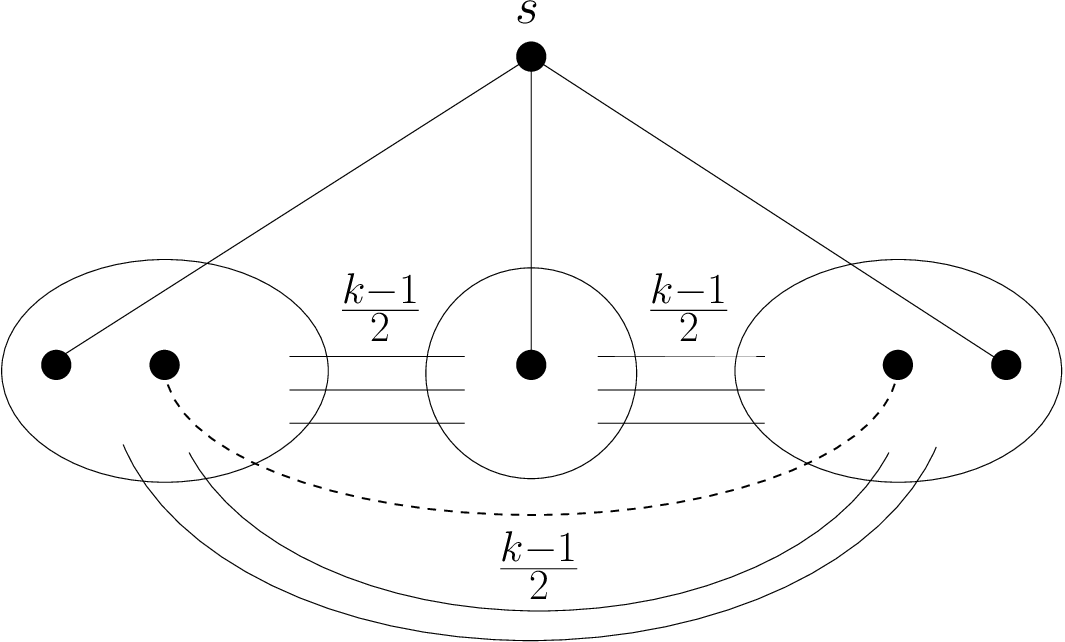}
    \end{minipage}
 \caption{A graph with $deg(s)=5$ whose lifting graph is an isolated vertex plus $K_{2,2}$ such that when any pair of edges on two different sides is lifted, no remaining pair of edges incident with $s$ is liftable, as any second lift will result in a cut of size $k-1$.}
    \label{example 2 lifting from deg 5 to deg 3}
\end{figure}

\begin{remark}
Note that when all three edges incident with $s$ are pairwise non-liftable it is possible that $k$ is even as well as odd.
\end{remark}

\begin{remark}
The situation when $deg(s)=3$ and no pair of edges incident with it is liftable, can come from lifting a pair of edges in graphs $G$ with different lifting graph structures for $deg(s)=5$ as illustrated in Figures \ref{example 1 lifting from deg 5 to deg 3} and \ref{example 2 lifting from deg 5 to deg 3}.
\end{remark}

\begin{remark}
When $k=1$, $\deg(s)=3$, then no pair of edges incident with $s$ is liftable if and only if each edges incident with $s$ is a cut-edge.
\end{remark}

\vspace{0.1in}

\noindent\textbf{\large Acknowledgement:} The author thanks her PhD. supervisor Bruce Richter for his careful reading of the paper and for the many helpful and detailed comments.

\bibliographystyle{plain}
\bibliography{reference}

\end{document}